\documentclass[twoside,12pt,a4paper]{amsart} 
\usepackage[utf8]{inputenc}
\usepackage[english]{babel}

\usepackage{amsmath}
\usepackage{amsfonts}
\usepackage{amssymb}
\usepackage{color}

\usepackage{amsthm}

\newtheorem{theorem}{Theorem}[section]

\newtheorem{lemma}[theorem]{Lemma}
\newtheorem{corollary}[theorem]{Corollary}
\newtheorem*{open}{Open Question}

\usepackage{graphics}
\usepackage{epsfig}

\usepackage{url}
\usepackage{hyperref}

\setlength{\parindent}{0pt}
\setlength{\parskip}{4pt}

\usepackage[a4paper,top=2.5cm,bottom=2.8cm,
left=3.2cm,right=3.2cm]{geometry}
\pagestyle{myheadings}
\markleft{\hfill \textsc{Relationships Between Circles Inscribed in Triangles and Curvilinear Triangles} \hfill}
\markright{\hfill \textsc{Stanley Rabinowitz} \hfill}

\setcounter{page}{9}
\begin{document}
Sangaku Journal of Mathematics (SJM) \copyright SJM \\
ISSN 2534-9562 \\
Volume 4 (2020), pp.9-27  \\
Received 19 November 2019. Published on-line 16 January 2020 \\ 
web: \url{http://www.sangaku-journal.eu/} \\
\copyright The Author(s) This article is published 
with open access\footnote{This article is distributed under the terms of the Creative Commons Attribution License which permits any use, distribution, and reproduction in any medium, provided the original author(s) and the source are credited.}. \\
\bigskip
\bigskip

\def\oldarc #1#2{\hbox{\raise 3.5pt\hbox{$\frown \atop \textstyle #1#2$}}}
\newcommand{\tarc}{\mbox{\large$\frown$}}
\newcommand{\marc}[1]{\stackrel{\tarc}{#1}}
\newcommand{\arc}[2]{\marc{#1#2}}
\newcommand{\Wedge}{skewed sector}
\newcommand{\Wedges}{skewed sectors}
\edef\void#1{}

\begin{center}
{\Large \textbf{Relationships Between Circles Inscribed in Triangles and Related Curvilinear Triangles}} \\
\medskip
\bigskip
\textsc{Stanley Rabinowitz} \\
545 Elm St Unit 1,  Milford, New Hampshire 03055, USA \\
e-mail: \href{mailto:stan.rabinowitz@comcast.net}{stan.rabinowitz@comcast.net} \\
web: \url{http://www.StanleyRabinowitz.com/} \\
\end{center}
\bigskip

\textbf{Abstract.} If $P$ is a point inside $\triangle ABC$, then the cevians
through $P$ extended to the circumcircle of $\triangle ABC$
create a figure containing a number of curvilinear triangles.
Each curvilinear triangle is bounded by an arc of the circumcircle and two line segments
lying along the sides or cevians of the original triangle.
We give theorems about the relationships between the radii of
circles inscribed in various sets of these curvilinear triangles.

\medskip
\textbf{Keywords.} circles, cevians, curvilinear triangles, sangaku.

\medskip
\textbf{Mathematics Subject Classification (2010).} 51M04.

\newcommand{\degrees}{^\circ}

\bigskip
\bigskip
\section{Introduction}

A \textit{curvilinear triangle} is a geometric figure bounded by three curves. The curves are typically line segments and arcs of circles, in which case there is a unique circle tangent to each of the three boundary curves. This circle is called the \textit{incircle} of the curvilinear triangle.

Wasan geometers loved to find relationships between the radii of circles inscribed in curvilinear triangles.
An example is shown in Figure~\ref{fig:Yamamoto} which comes from an 1841 book of Mathematical Formulae
written by Yamamoto \cite{Yamamoto}. It is also given as problem 5.3.9 in \cite{Fukagawa-Rigby}. In the figure,
$AH\perp BC$ and $BA\perp AC$. There are three
curvilinear triangles of interest in the figure.
The first curvilinear triangle is bounded by $BH$, $HA$, and arc $\arc{A}{B}$.
The second curvilinear triangle is bounded by $AH$, $HC$, and arc $\arc{C}{A}$.
The third curvilinear triangle is bounded by $CA$, $AB$, and arc $\arc{B}{C}$.
The radii of the circles inscribed in these curvilinear triangles are $r_1$, $r_2$, and $r_3$, respectively.
Then the nice relationship that was found is $r_1+r_2=r_3$.

\begin{figure}[h!t]
\centering
\includegraphics[width=0.5\linewidth]{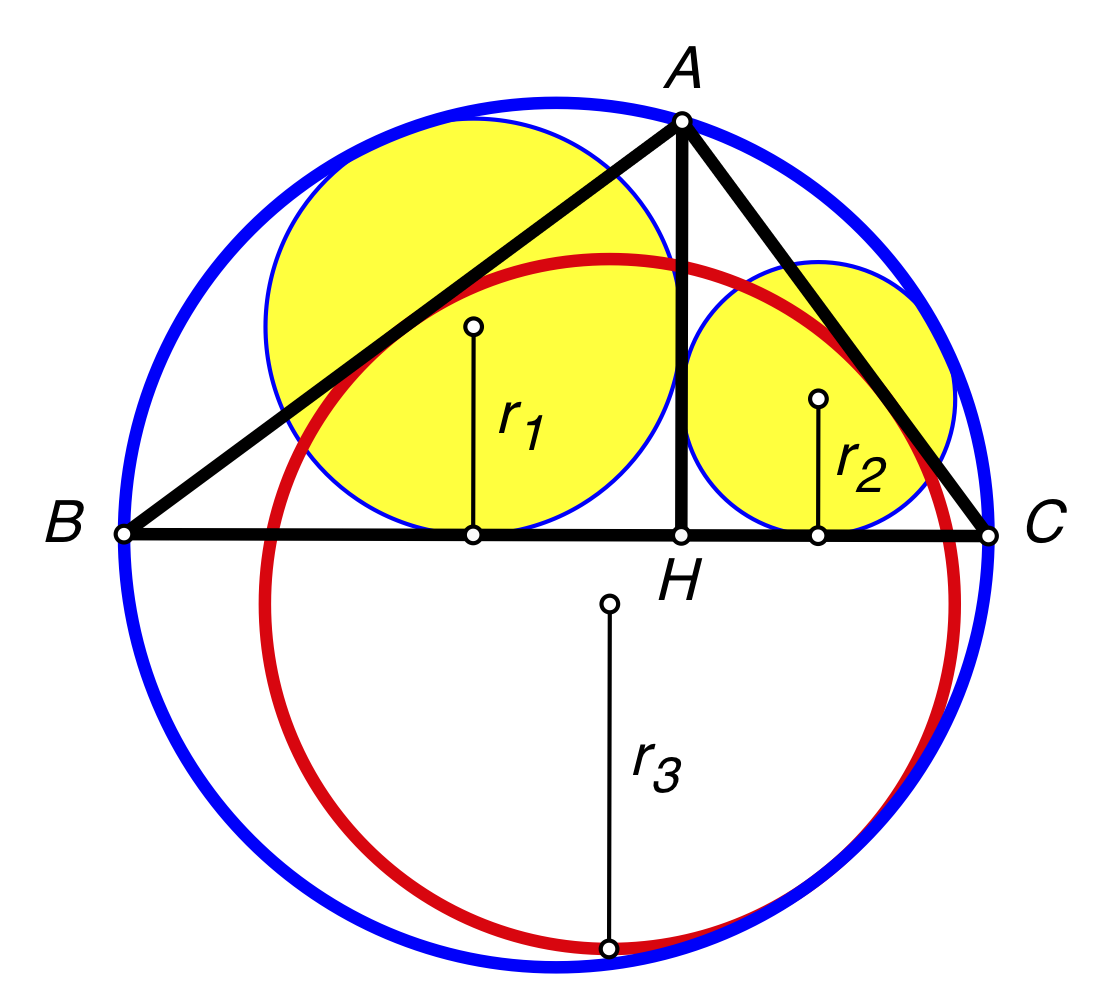}
\caption{$r_1+r_2=r_3$}
\label{fig:Yamamoto}
\end{figure}

In this paper, we will find some other nice relationships between the inradii of curvilinear triangles.

If a curvilinear triangle is convex and bounded by two straight line segments
and one circular arc, then we will call the resulting figure a \textit{\Wedge}
(see Figure~\ref{fig:wedge}).

\begin{figure}[h!t]
\centering
\includegraphics[width=0.35\linewidth]{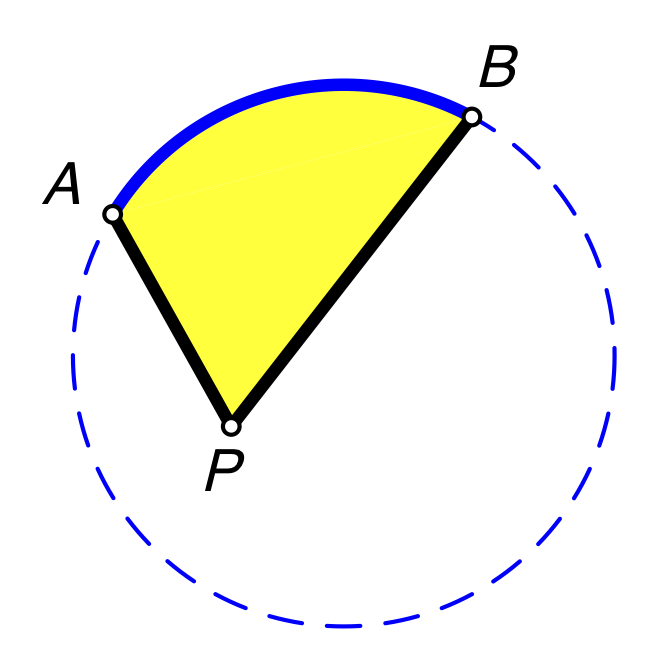}
\caption{a \Wedge}
\label{fig:wedge}
\end{figure}

\goodbreak
\textbf{Anatomy of a \Wedge.}

\begin{itemize}
\item The two straight line segments are called the \textit{sides} of the \Wedge.
\item The point of intersection of the two sides is called the \textit{vertex} of the \Wedge.
\item The angle between the two sides is called the \textit{vertex angle}.
\item The circular arc is referred to as the \textit{arc of the \Wedge}.
\item The circular measure of the arc of a {\Wedge} is called the \textit{arc angle}.
\item The circle to which the arc belongs will be called the \textit{circle associated with the \Wedge}.
\item The triangle formed by the vertex of a {\Wedge} and the endpoints of its arc will be
referred to as the \textit{triangle associated with the \Wedge}. This would be $\triangle APB$
in Figure~\ref{fig:wedge}.
\item When naming a \Wedge, the vertex will always be the middle letter.
Thus, the {\Wedge} in Figure~\ref{fig:wedge} is named {\Wedge} $APB$.
\item When the vertex of a {\Wedge} lies inside the associated circle, if the sides of the {\Wedge} are extended back through the vertex, they will intercept
an arc of the associated circle. This arc is called the \textit{opposite arc} of the \Wedge.
It is shown in red in Figure~\ref{fig:wedgeAnatomy}.
\item The vertex of a {\Wedge} and the opposite arc form another {\Wedge} called the \textit{opposite \Wedge}.
This is {\Wedge} $A'PB'$ in Figure~\ref{fig:wedgeAnatomy}.
\end{itemize}

\begin{figure}[h!t]
\centering
\includegraphics[width=0.4\linewidth]{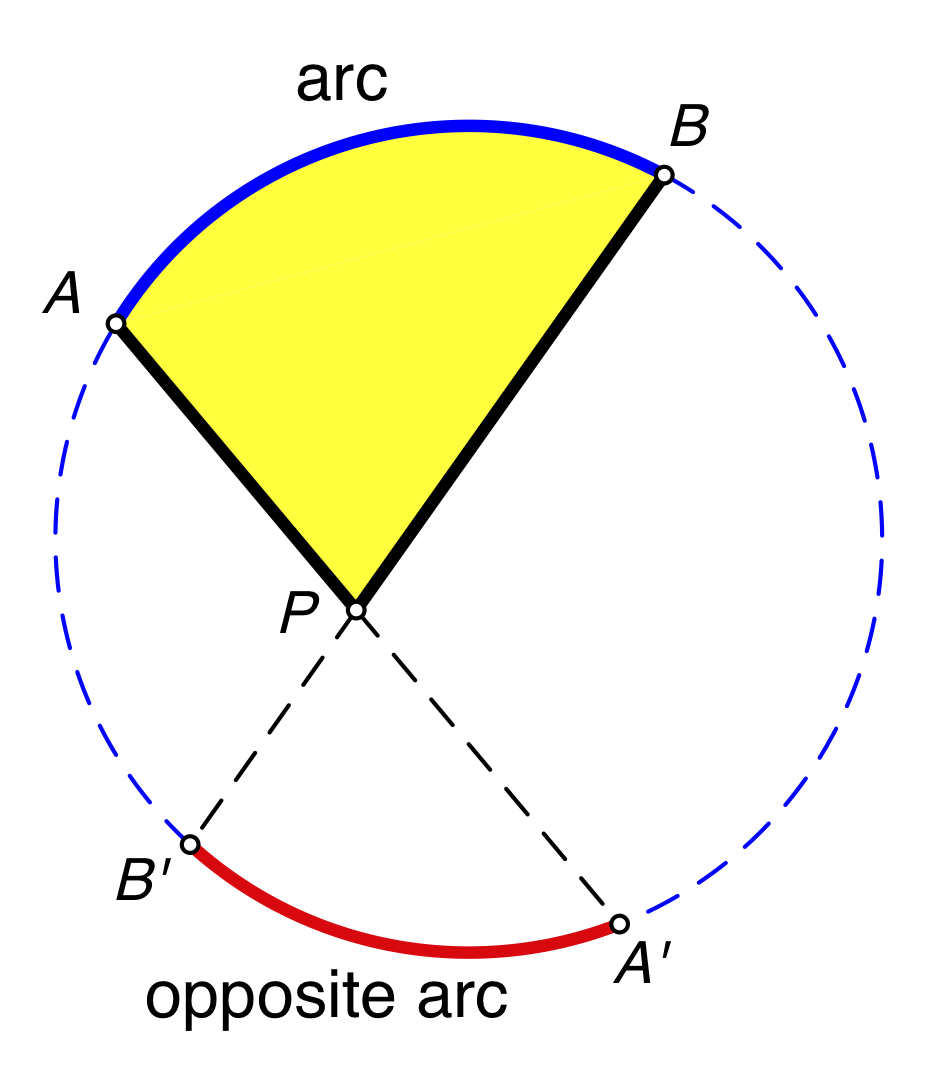}
\caption{opposite arc}
\label{fig:wedgeAnatomy}
\end{figure}

A \textit{segment} of a circle is the figure bounded by an arc of a circle and the chord joining the endpoints of that arc. The height (or sagitta) of the segment is the distance from the midpoint of the chord to the midpoint of the arc.

If $P$ is a point inside $\triangle ABC$, then the cevians
through $P$ extended to the circumcircle of $\triangle ABC$
create a figure containing a number of \Wedges.
We will find relationships between the radii of the circles inscribed in
some of these \Wedges.

\bigskip
\textbf{Notation.}
\begin{itemize}
\item If $X$ and $Y$ are points, then we use the notation $XY$ to denote either the line segment joining $X$ and $Y$ or the length of that line segment, depending on the context.
\item A \textit{cevian} of a triangle is a line segment from a vertex to the opposite side.
\item We use the notation $\angle XYZ$ to denote either the angle between $XY$ and $YZ$ or the measure of that angle, depending on the context.
\item The notation [XYZ] denotes the area of $\triangle XYZ$.
\item The notation $O(r)$ refers to the circle centered at point $O$ with radius $r$.
The circle may sometimes also be referred to as circle $O$.
\item If $\arc{X}{Y}$ is an arc of a circle, then $m(\arc{X}{Y})$ denotes the circular measure
of that arc. The arc extends counterclockwise along the circle from $X$ to $Y$.
\item Typically, we use $r$ for the inradius of a triangle and $w$ for the inradius of a \Wedge.
\end{itemize}

\section{Inradius Formula}

Formulas for the radius of the circle inscribed in a {\Wedge} and in a triangle are known.
Since these are not well-known, we review them here.

\newcommand{\C}{$\mathcal{C}$}

\begin{theorem}[Inradius of \Wedge]
\label{thm:wedgeFormula}
Let $APB$ be a {\Wedge} and let \C\ be the circle associated with arc $\arc AB$.
Suppose $P$ lies inside \C. Let $R$ be the radius of \C, let $w$ be the
radius of the circle inscribed in the \Wedge, and let $r$ be the radius of the circle inscribed in $\triangle APB$. Extend $BP$ to meet the circle \C\ at $C$ and draw $AC$.
Let $\alpha$, $\beta$, $\gamma$, $\delta$, and $\epsilon$ be the measures of five angles
associated with the {\Wedge} as shown in Figure~\ref{fig:wedgeFormula}.
Then
$$
\begin{aligned}
w&=\frac{\displaystyle 4R\sin{\beta\over2}\sin{\gamma\over2}\cos{\delta\over2}\sin{\epsilon\over2}}
{\displaystyle\left(\cos{\alpha\over2}\right)^2},\\[5pt]
r&=\frac{\displaystyle 4R\sin{\beta\over2}\sin{\gamma\over2}\sin{\epsilon\over2}\cos{\epsilon\over2}}
{\displaystyle\cos{\alpha\over2}}.
\end{aligned}
$$
\end{theorem}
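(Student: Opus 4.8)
The plan is to read off the five angles from Figure~\ref{fig:wedgeFormula} as $\alpha=\angle APB$ (the vertex angle), $\beta=\angle PAB$, $\gamma=\angle PBA$, $\epsilon=\angle ACB$, and $\delta=\angle PAC$, and to record at the outset the two relations they satisfy: $\alpha+\beta+\gamma=180\degrees$ (angle sum in $\triangle APB$) and, since $\angle APC=180\degrees-\alpha$ and $\angle ACP=\epsilon$, the relation $\delta=\angle PAC=\alpha-\epsilon$ (angle sum in $\triangle APC$). For the triangle formula I would start from the classical identity $r=4\rho\sin\frac{\alpha}{2}\sin\frac{\beta}{2}\sin\frac{\gamma}{2}$, where $\rho$ is the circumradius of $\triangle APB$. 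The only real work is to eliminate $\rho$ in favour of $R$: the chord $AB$ subtends $\angle APB=\alpha$ in $\triangle APB$ and the inscribed angle $\angle ACB=\epsilon$ in \C, so the extended law of sines gives $AB=2\rho\sin\alpha=2R\sin\epsilon$, whence $\rho=R\sin\epsilon/\sin\alpha$. Substituting and using $\sin\alpha=2\sin\frac{\alpha}{2}\cos\frac{\alpha}{2}$ and $\sin\epsilon=2\sin\frac{\epsilon}{2}\cos\frac{\epsilon}{2}$ collapses the expression directly to the stated formula for $r$.

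For the \Wedge, the key observation is that the inscribed circle is tangent to both sides, so its center $I$ lies on the bisector of the vertex angle at distance $PI=w/\sin\frac{\alpha}{2}$ from $P$, while internal tangency to \C\ forces $OI=R-w$, where $O$ is the center of \C. I would compute $OI^{2}$ by the law of cosines in $\triangle OPI$, which needs the distance $OP$ and the angle between ray $PO$ and the bisector. Here two standard tools do the heavy lifting: the power of the point $P$ gives $OP^{2}-R^{2}=-\,PB\cdot PC$, and the law of sines in $\triangle APB$ and $\triangle APC$ gives $PB=2R\sin\epsilon\sin\beta/\sin\alpha$ and $PC=2R\sin\gamma\sin(\alpha-\epsilon)/\sin\alpha$, hence a closed form for the product $PB\cdot PC$. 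Setting $OI^{2}=(R-w)^{2}$ and clearing denominators yields a quadratic in $w$ of the form $\cot^{2}\frac{\alpha}{2}\,w^{2}+bw-PB\cdot PC=0$. Because the constant term $-PB\cdot PC$ is negative, this quadratic has exactly one positive root, so there is no ambiguity in selecting the incircle of the \Wedge; the other root is negative and does not correspond to an inscribed circle. This disposes of what would otherwise be a delicate root-selection issue.

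The hard part will be purely trigonometric: one must show that the discriminant is a perfect square and that, after product-to-sum and half-angle identities, the positive root simplifies to the claimed closed form. I expect the decisive simplifications to be $\sin\beta\sin\gamma=\cos^{2}\frac{\alpha}{2}-\sin^{2}\frac{\beta-\gamma}{2}$ together with the collapse of the linear coefficient via $\frac{\beta+\gamma}{2}=90\degrees-\frac{\alpha}{2}$, which is what turns the surviving factor into $\cos\frac{\alpha-\epsilon}{2}=\cos\frac{\delta}{2}$ and leaves $\cos^{2}\frac{\alpha}{2}$ in the denominator. As a safeguard I would cross-check the isosceles case $\beta=\gamma$, where $O$ and $I$ both lie on the bisector and the law of cosines degenerates to a signed length equation; this reproduces $w=R\sin(\beta+\tfrac{\epsilon}{2})\sin\tfrac{\epsilon}{2}/\cos^{2}\tfrac{\beta}{2}$, in agreement with the general formula and confirming $\delta=\alpha-\epsilon$. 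As an alternative to the quadratic one could instead invoke Archimedes' tangent lemma, that the line joining the point where the incircle touches the arc to the point where it touches a side passes through the midpoint of the corresponding arc, but the direct computation seems the most economical route to the explicit constant.
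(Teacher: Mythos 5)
First, note that the paper does not actually prove this theorem: its ``proof'' is a one-line citation to Fukagawa--Rigby and to Unger, so there is no in-paper argument to measure you against. Your reading of the figure is right: the stated formulas force $\alpha=\angle APB$, $\beta=\angle PAB$, $\gamma=\angle PBA$, $\epsilon=\angle ACB$ and $\delta=\angle PAC=\alpha-\epsilon$, and this is consistent with Theorems~\ref{thm:wrFormula} and~\ref{thm:wrFormula2} (your two formulas give $w/r=\cos(\delta/2)/\bigl(\cos(\alpha/2)\cos(\epsilon/2)\bigr)$, which equals $1+\tan(\alpha/2)\tan(\epsilon/2)$ precisely because $\delta=\alpha-\epsilon$). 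Your derivation of the $r$-formula is complete and correct: $r=4\rho\sin\frac{\alpha}{2}\sin\frac{\beta}{2}\sin\frac{\gamma}{2}$ together with $\rho=R\sin\epsilon/\sin\alpha$ (from $AB=2\rho\sin\alpha=2R\sin\epsilon$) collapses at once to the stated expression. The auxiliary claims in your $w$-half are also all correct: $PI=w/\sin\frac{\alpha}{2}$, $OI=R-w$, $OP^2-R^2=-PB\cdot PC$, the two law-of-sines values of $PB$ and $PC$, the form $\cot^2\frac{\alpha}{2}\,w^2+bw-PB\cdot PC=0$ of the quadratic, the uniqueness of the positive root, and the isosceles cross-check (which does agree with the general formula).

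The gap is that the one ingredient carrying all of the content of the $w$-formula is never supplied: the coefficient $b$, equivalently the projection of $\vec{PO}$ onto the internal bisector of $\angle APB$. You flag ``the angle between ray $PO$ and the bisector'' as needed and then never compute it, and the decisive step (the discriminant being a perfect square and the positive root equalling the claimed closed form) is only conjectured. To close it, observe that the signed distances from $O$ to the chords $BC$ and $AA'$ are $R\cos(\gamma+\epsilon)$ and $R\cos(\beta+\epsilon)$ (half the arcs they cut off on the relevant sides are $\gamma+\epsilon$ and $\beta+\epsilon$), so the projection of $\vec{PO}$ on the bisector is
$$\frac{R\bigl(\cos(\gamma+\epsilon)+\cos(\beta+\epsilon)\bigr)}{2\sin\frac{\alpha}{2}}=\frac{R\sin\bigl(\frac{\alpha}{2}-\epsilon\bigr)\cos\frac{\beta-\gamma}{2}}{\sin\frac{\alpha}{2}},$$
using $\beta+\gamma=180\degrees-\alpha$; this makes $b$ explicit, and your identity $\sin\beta\sin\gamma=\cos^2\frac{\alpha}{2}-\sin^2\frac{\beta-\gamma}{2}$ is then indeed what turns the discriminant into a square. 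Alternatively, the route you mention only in passing --- Archimedes' lemma that the chord from the arc-contact point of the inscribed circle to a side-contact point passes through the midpoint of the corresponding arc --- is essentially what the cited sources use and avoids the quadratic entirely; since the crux of your computation is deferred to ``I expect the decisive simplifications to be\dots'', that alternative is arguably the more economical choice, not the less.
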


\begin{figure}[h!t]
\centering
\includegraphics[width=0.4\linewidth]{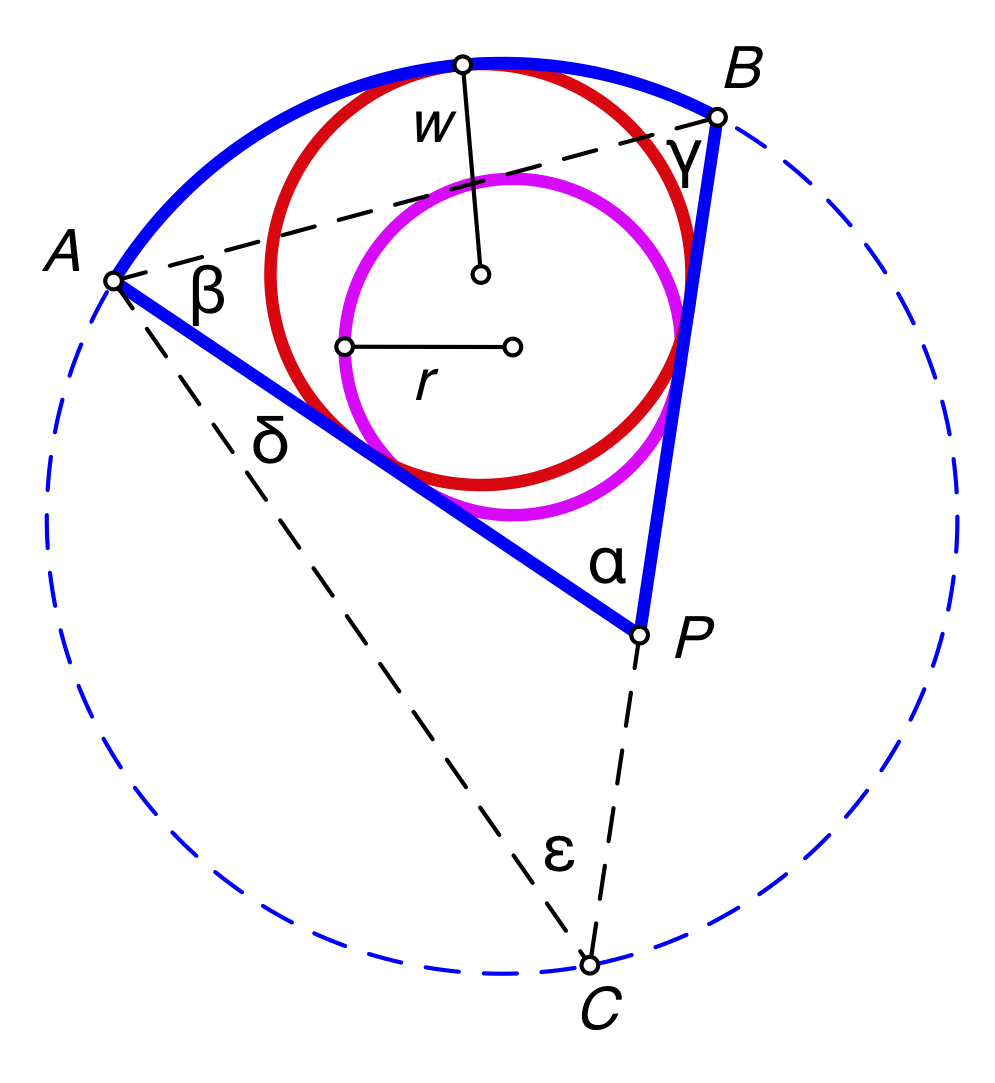}
\caption{angles associated with a \Wedge}
\label{fig:wedgeFormula}
\end{figure}

\begin{proof}
See \cite[pp.~96--97]{Fukagawa-Rigby} or \cite[p.~26]{Unger30}.
\end{proof}

An immediate consequence of this theorem is the following result.

\begin{theorem}
\label{thm:wrFormula}
Let $APB$ be a {\Wedge} and suppose $P$ lies inside the associated circle.
Let $w$ be the radius of the circle inscribed in the \Wedge, and
let $r$ be the radius of the circle inscribed in $\triangle APB$.
Let $\alpha$ be the vertex angle of the \Wedge,
let $\theta_1$ be the arc angle of the \Wedge, and let $\theta_2$ be the arc angle of the opposite \Wedge.
See Figure~\ref{fig:wrFormula}.
Then
$$\frac{w}{r}=\frac{\cos(\theta_2/4)}{\cos(\alpha/2)\cos(\theta_1/4)}.$$
\end{theorem}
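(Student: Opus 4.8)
The plan is to read the result straight off Theorem~\ref{thm:wedgeFormula} by forming the ratio $w/r$ and then reinterpreting two of the five angles of Figure~\ref{fig:wedgeFormula} as arc angles. Since the statement advertises this as an ``immediate consequence,'' the algebra should be short and the content should lie entirely in the angle identifications.

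First I would divide the two displayed formulas of Theorem~\ref{thm:wedgeFormula}. The factor $4R\sin\frac{\beta}{2}\sin\frac{\gamma}{2}\sin\frac{\epsilon}{2}$ is common to both numerators, and one power of $\cos\frac{\alpha}{2}$ cancels against the denominator of $r$, leaving
$$\frac{w}{r}=\frac{\cos(\delta/2)}{\cos(\alpha/2)\,\cos(\epsilon/2)}.$$
Everything then reduces to proving $\epsilon=\theta_1/2$ and $\delta=\theta_2/2$, the vertex angle $\alpha$ already being the same quantity in both theorems.

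Both identifications come from the inscribed angle theorem in circle $\mathcal C$. In Figure~\ref{fig:wedgeFormula} the angle $\epsilon=\angle ACB$ is an inscribed angle standing on the chord $AB$, and the arc it subtends is exactly the arc $\arc{A}{B}$ of the \Wedge, of measure $\theta_1$; hence $\epsilon=\theta_1/2$ and $\cos(\epsilon/2)=\cos(\theta_1/4)$. For $\delta$, recall that $B$, $P$, $C$ are collinear, and let $A'$ be the second intersection of line $AP$ with $\mathcal C$. Then $BC$ and $AA'$ are two chords meeting at $P$, the opposite \Wedge\ is $A'PC$, and its arc $\arc{A'}{C}$ has measure $\theta_2$. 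Since $A'$ lies on ray $AP$, the angle $\delta=\angle PAC=\angle A'AC$ is the inscribed angle standing on this opposite arc, so $\delta=\theta_2/2$ and $\cos(\delta/2)=\cos(\theta_2/4)$. Substituting both into the ratio gives the claimed formula.

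The main obstacle is not the algebra but the bookkeeping in Figure~\ref{fig:wedgeFormula}: one must be certain that $\epsilon$ and $\delta$ subtend the wedge arc and the opposite arc respectively, and not their complementary arcs. A convenient consistency check is that $\angle APC=180\degrees-\alpha$ forces $\delta+\epsilon=\alpha$ in $\triangle APC$; since the chords $BC$ and $AA'$ meet at $P$, the intersecting-chords angle relation gives $\alpha=(\theta_1+\theta_2)/2$, which agrees with $\delta+\epsilon=\theta_1/2+\theta_2/2$ and confirms that the arcs have been matched correctly.
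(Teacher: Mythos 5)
Your proposal is correct and matches the paper's intent exactly: the paper offers no written proof of this theorem beyond declaring it an immediate consequence of Theorem~\ref{thm:wedgeFormula}, and your argument---dividing the two displayed formulas and identifying $\epsilon=\theta_1/2$, $\delta=\theta_2/2$ via the inscribed angle theorem---is precisely that deduction. The consistency check $\delta+\epsilon=\alpha=(\theta_1+\theta_2)/2$ is a nice touch that guards against mismatching the arcs.
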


\begin{figure}[h!t]
\centering
\includegraphics[width=0.45\linewidth]{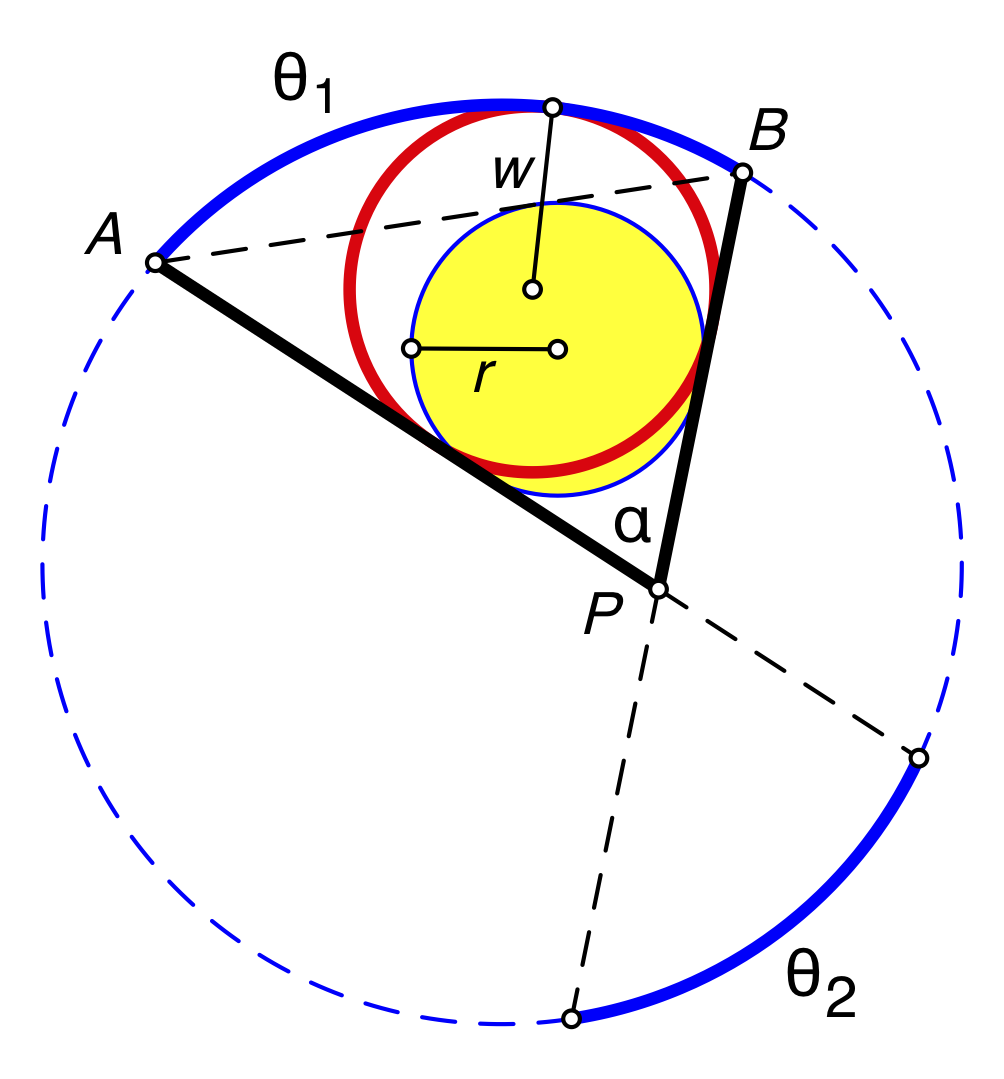}
\caption{}
\label{fig:wrFormula}
\end{figure}

We can also express $w/r$ without using $\theta_2$ as follows.

\begin{theorem}
\label{thm:wrFormula2}
Using the same notation as in Theorem \ref{thm:wrFormula},
$$\frac{w}{r}=1+\tan(\alpha/2)\tan(\theta_1/4).$$
\end{theorem}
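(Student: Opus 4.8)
The plan is to reduce this statement to Theorem~\ref{thm:wrFormula}, which already expresses $w/r$ in terms of $\alpha$, $\theta_1$, and $\theta_2$. The work therefore amounts to eliminating $\theta_2$ in favor of $\alpha$ and $\theta_1$ by exploiting the geometric constraint that ties the three angles together.

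First I would record the key relationship among the angles. Since $P$ lies inside the associated circle, the two sides of the {\Wedge}, when extended, are chords meeting at the interior point $P$. By the intersecting-chords angle theorem, the vertex angle $\alpha$ equals half the sum of the arc it subtends and the arc subtended by its vertical angle. The first of these is the arc of the {\Wedge}, with measure $\theta_1$, and the second is the opposite arc, with measure $\theta_2$. Hence
$$\alpha=\frac{\theta_1+\theta_2}{2},\qquad\text{so}\qquad \theta_2=2\alpha-\theta_1.$$
This single identity is the crux of the argument; once it is in hand, the rest is routine trigonometry.

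Next I would substitute $\theta_2=2\alpha-\theta_1$ into the formula of Theorem~\ref{thm:wrFormula}. The numerator becomes
$$\cos\frac{\theta_2}{4}=\cos\!\left(\frac{\alpha}{2}-\frac{\theta_1}{4}\right)=\cos\frac{\alpha}{2}\cos\frac{\theta_1}{4}+\sin\frac{\alpha}{2}\sin\frac{\theta_1}{4},$$
using the cosine difference identity. Dividing by the denominator $\cos(\alpha/2)\cos(\theta_1/4)$ splits the quotient into two terms: the first equals $1$, and the second equals exactly $\tan(\alpha/2)\tan(\theta_1/4)$. This yields $w/r=1+\tan(\alpha/2)\tan(\theta_1/4)$, as claimed.

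I expect the only genuine obstacle to be establishing the angle relationship $\alpha=(\theta_1+\theta_2)/2$ with the correct justification, namely confirming that the arc of the {\Wedge} and the opposite arc are precisely the two arcs intercepted by the pair of vertical angles at $P$, so that the intersecting-chords theorem applies verbatim. Everything after that reduces to a one-line trigonometric simplification.
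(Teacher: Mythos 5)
Your proposal is correct, and it is worth noting that it supplies something the paper itself does not: the paper's ``proof'' of this theorem is only a citation to Unger's collection, whereas you give a short self-contained derivation from Theorem~\ref{thm:wrFormula}. The one step that needs care is exactly the one you flag, the identity $\alpha=(\theta_1+\theta_2)/2$, and it holds for the stated configuration: since $P$ is inside the associated circle, the lines containing the two sides are chords through $P$, the vertex angle intercepts the arc of the skewed sector ($\theta_1$) and its vertical angle intercepts the opposite arc ($\theta_2$), so the intersecting-chords angle theorem applies verbatim. (The paper itself uses this same relation later, in the proof of Theorem~\ref{thm:4circlesEqualAngles}, where it writes $\alpha_1=(\theta_1+\phi)/2$.) The remaining computation
$$\frac{\cos\left(\frac{\alpha}{2}-\frac{\theta_1}{4}\right)}{\cos\frac{\alpha}{2}\cos\frac{\theta_1}{4}}=1+\tan\frac{\alpha}{2}\tan\frac{\theta_1}{4}$$
is exactly as you describe, and the sign of $\alpha/2-\theta_1/4$ is immaterial since cosine is even. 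The only caveat is logical rather than mathematical: your argument makes Theorem~\ref{thm:wrFormula2} a corollary of Theorem~\ref{thm:wrFormula}, which the paper also states without an internal proof, so your derivation is complete only relative to that earlier result; this is a reasonable dependency to accept, since the later applications (e.g., Theorem~\ref{thm:crossedChords}) already lean on Theorem~\ref{thm:wrFormula} directly.
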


\begin{proof}
See \cite[pp.~26--27]{Unger30}.
\end{proof}

\section{Relationship Between Incircles of Skewed Sectors and Incircles of Triangles}

To prove a relationship between {\Wedge} inradii, Theorems~\ref{thm:wedgeFormula}, \ref{thm:wrFormula}, or \ref{thm:wrFormula2} could be used
to find the length of each radius. This is a brute force technique and better methods are available.
One strategy for finding relationships between the radii of circles inscribed in {\Wedges} is to relate these
circles to circles inscribed in triangles, for which results are already known.

\void{
The cevians through a point $P$ inside a triangle $ABC$ divide that triangle into smaller triangles. The six small triangles with vertex at $P$ will be named $T_1$ through $T_6$ as shown in Figure~\ref{fig:smallTriangles}.
\begin{figure}[h!t]
\centering
\includegraphics[width=0.5\linewidth]{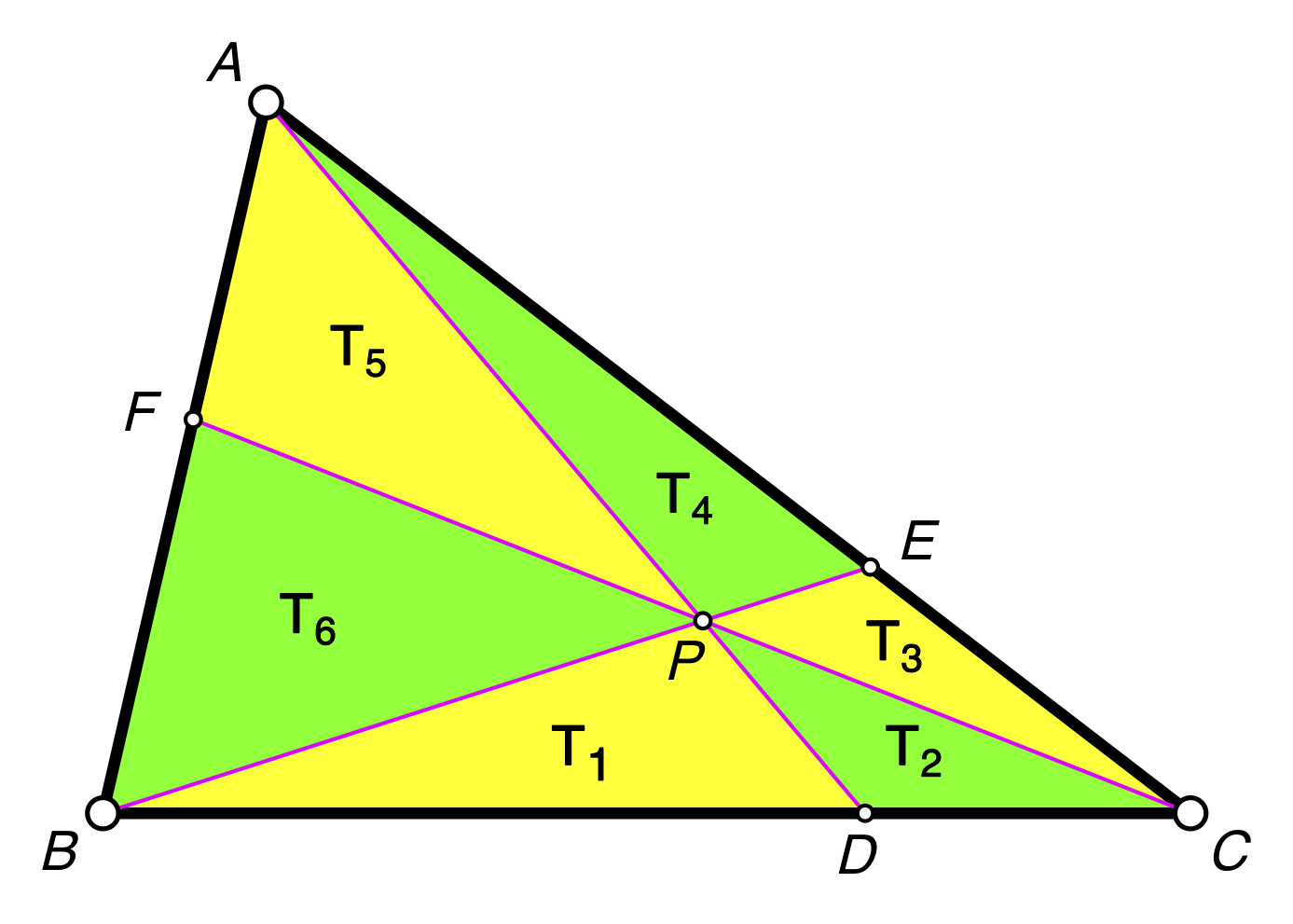}
\caption{naming of small triangles}
\label{fig:smallTriangles}
\end{figure}
}

Here are some theorems that relate circles in {\Wedges} to circles in triangles.

The following theorem appeared on a tablet in 1781.
See \cite[problem~4.0.3]{Fukagawa-Rigby}, \cite[problem~2.2.8]{Fukagawa-Pedoe}, \cite{Hirayama}, and \cite{Sanpo}.

\begin{theorem}[Ajima's Theorem]
\label{thm:AjimasTheorem}
Let $AB$ be a chord of a circle and let $C$ be a point inside the circle, not on the chord.
See Figure~\ref{fig:AjimasTheorem}.
Let $W(w)$ be the incircle of {\Wedge} $ACB$ (the red circle) and let $O(r)$ be the
incircle of $\triangle ACB$ (the yellow circle).
Then
$$w=r+\frac{2d(s-a)(s-b)}{cs},$$
where $d$ is the height of the segment formed by $AB$, $a=BC$, $b=AC$, $c=AB$,
and $s$ is the semiperimeter of $\triangle ABC$.
\end{theorem}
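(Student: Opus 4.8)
The plan is to invoke Theorem~\ref{thm:wrFormula2}, which already expresses the ratio $w/r$ in terms of the vertex angle and arc angle of the {\Wedge}, and then translate that trigonometric identity into the purely metric form claimed. Here the {\Wedge} is $ACB$, so its vertex angle is $\alpha=\angle ACB$ (the angle $C$ of $\triangle ACB$) and its arc angle is $\theta_1=m(\arc{A}{B})$. Since $C$ lies inside the circle, the hypothesis of Theorem~\ref{thm:wrFormula2} is satisfied, and that theorem gives $w/r=1+\tan(\alpha/2)\tan(\theta_1/4)$, hence
$$w-r=r\tan(\alpha/2)\tan(\theta_1/4).$$
It therefore suffices to show that $r\tan(\alpha/2)=\dfrac{(s-a)(s-b)}{s}$ and that $\tan(\theta_1/4)=\dfrac{2d}{c}$; multiplying these two relations yields exactly the stated formula.

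First I would establish the triangle identity $r\tan(\alpha/2)=(s-a)(s-b)/s$ using only standard facts about $\triangle ACB$. The tangent lengths from $C$ to the incircle equal $s-c$, so the angle bisector at $C$ gives $\tan(\alpha/2)=r/(s-c)$. Combining this with Heron's formula in the form $r^2=(s-a)(s-b)(s-c)/s$ (which follows from $rs=[ACB]=\sqrt{s(s-a)(s-b)(s-c)}$) immediately produces $r\tan(\alpha/2)=r^2/(s-c)=(s-a)(s-b)/s$.

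Next I would prove the chord--sagitta relation $\tan(\theta_1/4)=2d/c$. Let $R$ be the radius of the given circle. Since the chord $AB$ subtends the arc of measure $\theta_1$, we have $c=2R\sin(\theta_1/2)$, while the sagitta is $d=R\bigl(1-\cos(\theta_1/2)\bigr)$. Then
$$\frac{2d}{c}=\frac{2R\bigl(1-\cos(\theta_1/2)\bigr)}{2R\sin(\theta_1/2)}=\frac{1-\cos(\theta_1/2)}{\sin(\theta_1/2)}=\tan(\theta_1/4),$$
by the half-angle identity $\tan(\phi/2)=(1-\cos\phi)/\sin\phi$ with $\phi=\theta_1/2$. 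Substituting both relations into the expression for $w-r$ gives $w-r=\dfrac{(s-a)(s-b)}{s}\cdot\dfrac{2d}{c}=\dfrac{2d(s-a)(s-b)}{cs}$, as required.

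Since each of the three ingredients is elementary, I do not anticipate a serious obstacle; the only points demanding care are correctly identifying $\alpha$ and $\theta_1$ with the angle $C$ and the central angle subtended by $AB$, and tracking the quarter-angle $\theta_1/4$ through the half-angle reduction so that the factor of $2$ in $2d/c$ comes out right.
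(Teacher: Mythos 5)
Your proposal is correct, and it is a genuinely different route from the paper's, because the paper does not actually prove Ajima's Theorem at all --- it simply cites Fukagawa--Rigby and Unger. Your argument reduces the formula to Theorem~\ref{thm:wrFormula2}: from $w-r=r\tan(\alpha/2)\tan(\theta_1/4)$ you verify the two elementary identities $r\tan(\alpha/2)=(s-a)(s-b)/s$ (via the tangent length $s-c$ from the vertex $C$, so $\tan(\alpha/2)=r/(s-c)$, combined with $r^{2}=(s-a)(s-b)(s-c)/s$) and $\tan(\theta_1/4)=2d/c$ (via $c=2R\sin(\theta_1/2)$, $d=R\bigl(1-\cos(\theta_1/2)\bigr)$, and the half-angle identity). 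Both are right; note also that the sagitta formula $d=R\bigl(1-\cos(\theta_1/2)\bigr)$ remains valid when the arc of the {\Wedge} is a major arc, so no case split is needed. What your route buys is that Ajima's Theorem becomes a short corollary of the $w/r$ ratio formula already stated in the paper, and it exposes the geometric meaning of each factor in $2d(s-a)(s-b)/(cs)$; the cost is that it rests on Theorem~\ref{thm:wrFormula2}, whose proof the paper likewise outsources, so a fully self-contained treatment would still need to supply that ingredient from Theorem~\ref{thm:wedgeFormula} or from Unger's synthetic argument.
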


\begin{figure}[h!t]
\centering
\includegraphics[width=0.4\linewidth]{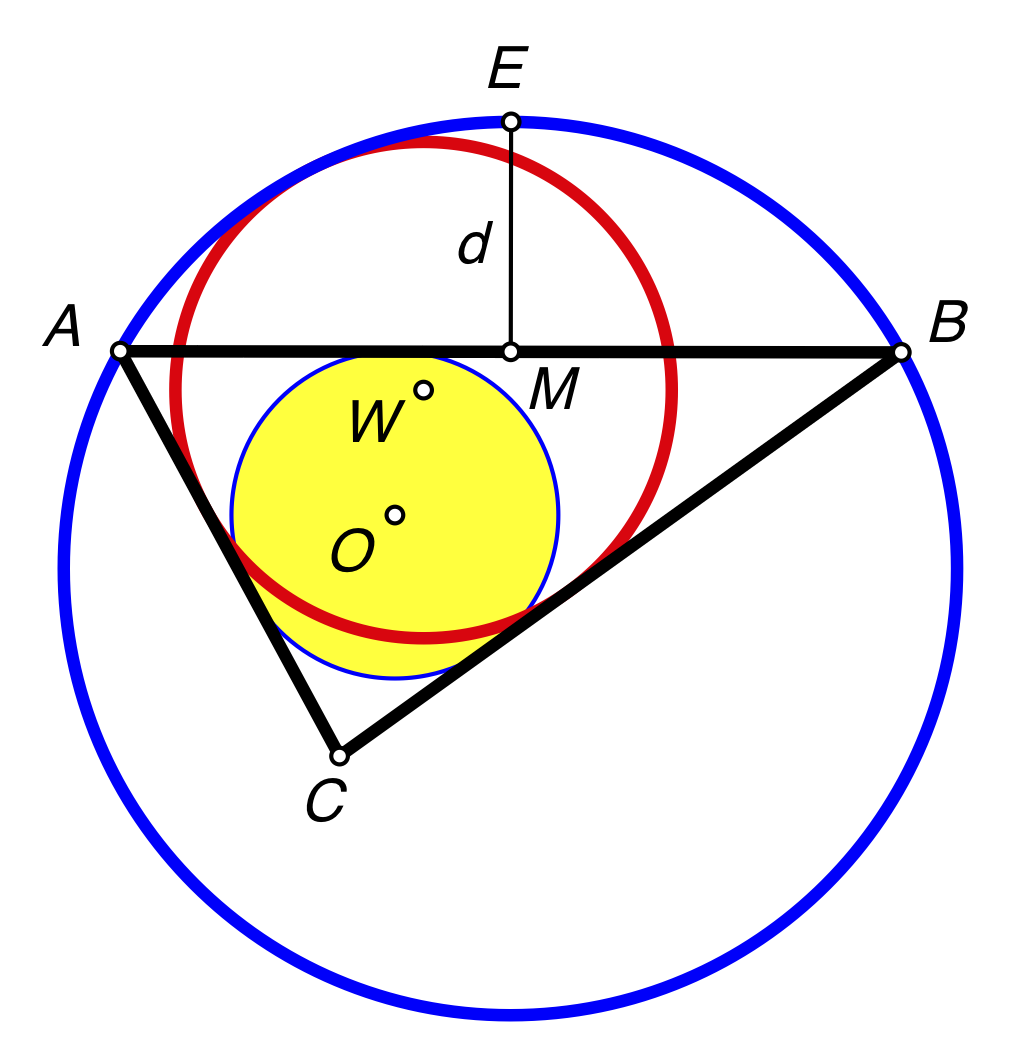}
\caption{$w=r+2d(s-a)(s-b)/(cs)$}
\label{fig:AjimasTheorem}
\end{figure}

\begin{proof}
See \cite[pp.~96-97]{Fukagawa-Rigby}. A more detailed proof can be found in \cite[pp.~40-49]{Unger}.
\end{proof}

\begin{theorem}
\label{thm:4circles}
Let $D$ be any point on side $BC$ of $\triangle ABC$. Cevian $AD$ extended meets the circumcircle of $\triangle ABC$ at $D'$.
Let $W_1(w_1)$ be the incircle of {\Wedge} $BDD'$ and
let $W_2(w_2)$ be the incircle of {\Wedge} $CDD'$.
Let $O_1(r_1)$ be the incircle of $\triangle ADB$ and
let $O_2(r_2)$ be the incircle of $\triangle ADC$
(Figure~\ref{fig:segs-4circles}).
Then
$$\frac{1}{r_1}+\frac{1}{w_2}=\frac{1}{r_2}+\frac{1}{w_1}.$$
\end{theorem}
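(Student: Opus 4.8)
The plan is to begin by rewriting the desired identity in the equivalent form
$$\frac{1}{r_1}-\frac{1}{w_1}=\frac{1}{r_2}-\frac{1}{w_2},$$
which pairs each triangle with the {\Wedge} lying on the same side of the cevian line $AD'$: the incircles $O_1(r_1)$ and $W_1(w_1)$ both rest against the segment $DB$, while $O_2(r_2)$ and $W_2(w_2)$ both rest against $DC$. Because interchanging the labels $B$ and $C$ carries the left-hand difference to the right-hand difference, it will suffice to obtain a closed form for $1/r_1-1/w_1$ and then verify that this form is invariant under that interchange.

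Next I would introduce the four arcs cut off by the concyclic points $A,B,D',C$: set $a_1=m(\arc{A}{B})$, $a_2=m(\arc{B}{D'})$, $a_3=m(\arc{D'}{C})$, and $a_4=m(\arc{C}{A})$, so $a_1+a_2+a_3+a_4=360\degrees$. Since the chords $AD'$ and $BC$ meet at $D$, the intersecting-chords angle theorem gives the vertex angle $\phi:=\angle ADB=\tfrac12(a_1+a_3)$, while the inscribed-angle theorem gives $\angle ABD=\tfrac12 a_4$, $\angle BAD=\tfrac12 a_2$, and $AB=2R\sin\tfrac12 a_1$. From these, the circumradius of $\triangle ADB$ is $AB/(2\sin\phi)$, so the formula $r=4\rho\sin\tfrac{A}{2}\sin\tfrac{B}{2}\sin\tfrac{C}{2}$ yields
$$r_1=\frac{2R\,\sin\tfrac12 a_1\,\sin\tfrac14 a_2\,\sin\tfrac14 a_4}{\cos\tfrac12\phi}.$$
For $w_1$ I would apply Theorem~\ref{thm:wedgeFormula} to {\Wedge} $BDD'$, noting that the auxiliary chord of that theorem (got by extending $D'D$) meets the circumcircle again exactly at $A$; the five angles then read off as half-arcs, with $\alpha=180\degrees-\phi$ the vertex angle and $\tfrac12 a_2,\tfrac12 a_4,\tfrac12 a_1,\tfrac12 a_3$ filling the roles of $\epsilon,\delta,\gamma,\beta$, giving
$$w_1=\frac{4R\,\sin\tfrac14 a_1\,\sin\tfrac14 a_2\,\sin\tfrac14 a_3\,\cos\tfrac14 a_4}{\sin^2\tfrac12\phi}.$$
(This can be cross-checked against Theorems~\ref{thm:wrFormula} and \ref{thm:wrFormula2}.)

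Finally I would substitute $u=\tfrac14 a_1$, $v=\tfrac14 a_2$, $x=\tfrac14 a_3$, $y=\tfrac14 a_4$, recording the single constraint $u+v+x+y=90\degrees$ and the relation $\tfrac12\phi=u+x$. Clearing the common, fully symmetric denominator $4R\sin u\sin v\sin x\sin y$ and regrouping turns the target equation into the purely trigonometric statement
$$\frac{\cos(u+x)\bigl[\sin x+\sin u\cos(u+x)\bigr]}{\cos u}=\frac{\sin(u+x)\bigl[\sin v+\sin y\sin(u+x)\bigr]}{\cos y}.$$
The left side collapses by the subtraction formula $\sin(u+x)\cos u-\sin u\cos(u+x)=\sin x$, and the right side collapses by the constraint in the form $\sin v=\cos(u+x+y)$; both sides reduce to $\sin(u+x)\cos(u+x)=\tfrac12\sin\phi$, which establishes the identity.

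I expect the main obstacle to be the second step: correctly identifying the five angles of Theorem~\ref{thm:wedgeFormula} for {\Wedge} $BDD'$, and thereby pinning down both the exponent $\sin^2\tfrac12\phi$ and the single bare cosine factor $\cos\tfrac14 a_4$ in $w_1$. Everything downstream is routine, but a slip in those angle assignments would destroy the cancellation. A secondary point to confirm is that $B\leftrightarrow C$ genuinely sends the $B$-side difference to the $C$-side difference, acting on the arcs by $a_1\leftrightarrow a_4$, $a_2\leftrightarrow a_3$ (equivalently $u\leftrightarrow y$, $v\leftrightarrow x$) and sending $\phi$ to $180\degrees-\phi$; it is exactly this symmetry, together with $\sin\phi=\sin(180\degrees-\phi)$, that makes the trigonometric reduction close.
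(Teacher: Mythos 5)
Your proposal is correct. I verified the two formulas it rests on: with $u,v,x,y$ the quarter-arcs and $\phi/2=u+x$, one gets $r_1=4R\sin u\cos u\sin v\sin y/\cos(u+x)$ and $w_1=4R\sin u\sin v\sin x\cos y/\sin^2(u+x)$ (the latter can be double-checked against Theorem~\ref{thm:wrFormula2}: the inradius of $\triangle BDD'$ is $4R\sin u\sin v\cos v\sin x/\sin(u+x)$ and $1+\cot(u+x)\tan v=\cos y/\bigl(\sin(u+x)\cos v\bigr)$, giving the same $w_1$). Your reduced identity and its two-line collapse via $\sin(u+x)\cos u-\sin u\cos(u+x)=\sin x$ and $\sin v=\cos(u+x+y)$ both check out; each side equals $\sin(u+x)\cos(u+x)$, so in fact
$$\frac{1}{r_1}-\frac{1}{w_1}=\frac{1}{r_2}-\frac{1}{w_2}=\frac{\sin\phi}{8R\sin u\sin v\sin x\sin y},$$
which is manifestly invariant under the swap $u\leftrightarrow y$, $v\leftrightarrow x$. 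Strategically this is the same route the paper takes --- express all four radii via Theorem~\ref{thm:wedgeFormula} and verify a trigonometric identity --- but the execution differs in a way that matters: the paper forms the combination $S$ in terms of the five angles of Figure~\ref{fig:angleNames} and reports that a computer algebra system simplifies it to $0$, then poses as an Open Question whether the heavy machine computation can be avoided. Your choice of the four quarter-arcs with the single constraint $u+v+x+y=90^\circ$ as coordinates is what makes the cancellation visible by hand, and the symmetric closed form above is a genuine bonus the paper's proof does not produce. It is still a trigonometric rather than a synthetic argument, so it does not answer the Open Question in its fullest spirit, but it does remove the dependence on computer simplification.
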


\begin{figure}[h!t]
\centering
\includegraphics[width=0.4\linewidth]{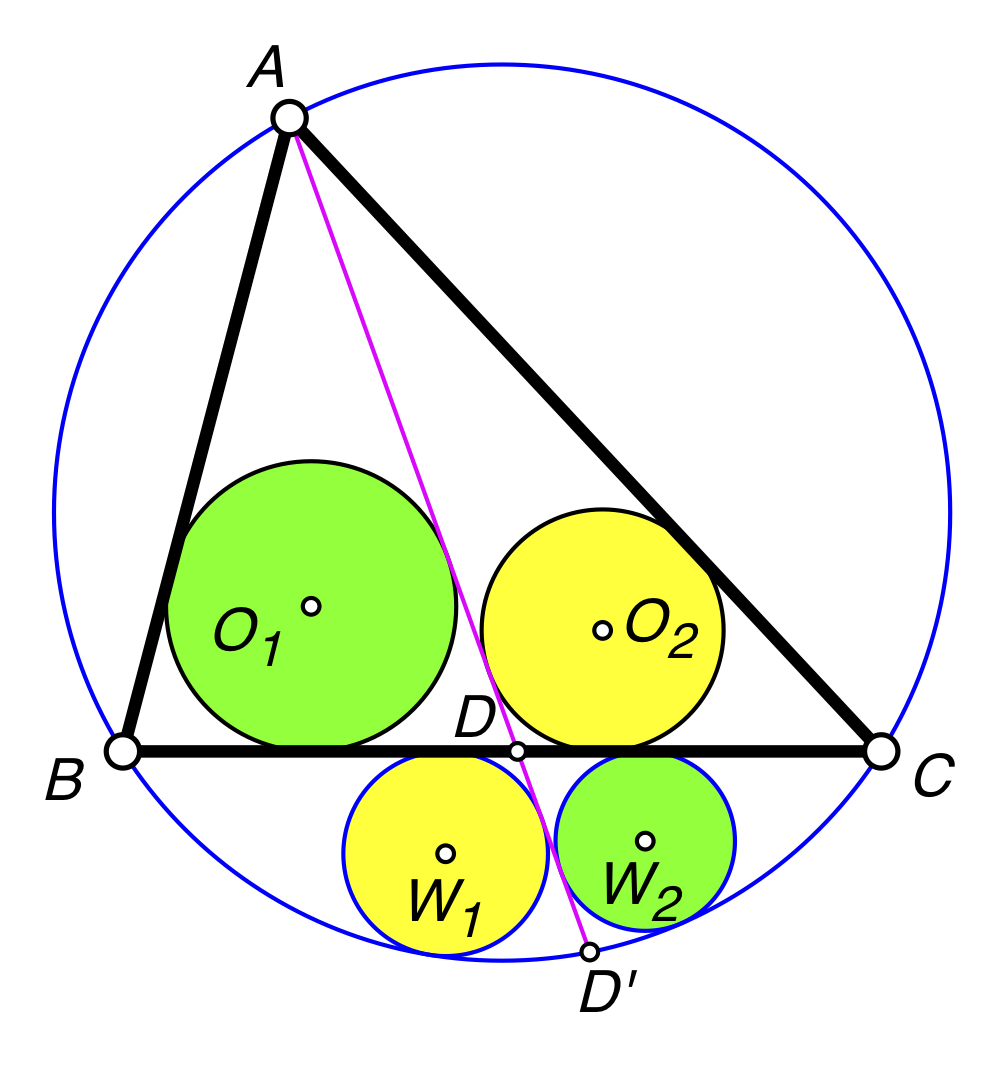}
\caption{$1/r_1+1/w_2=1/r_2+1/w_1$}
\label{fig:segs-4circles}
\end{figure}

\begin{proof}
We give names to the various angles as shown in Figure~\ref{fig:angleNames}.

\begin{figure}[h!t]
\centering
\includegraphics[width=0.5\linewidth]{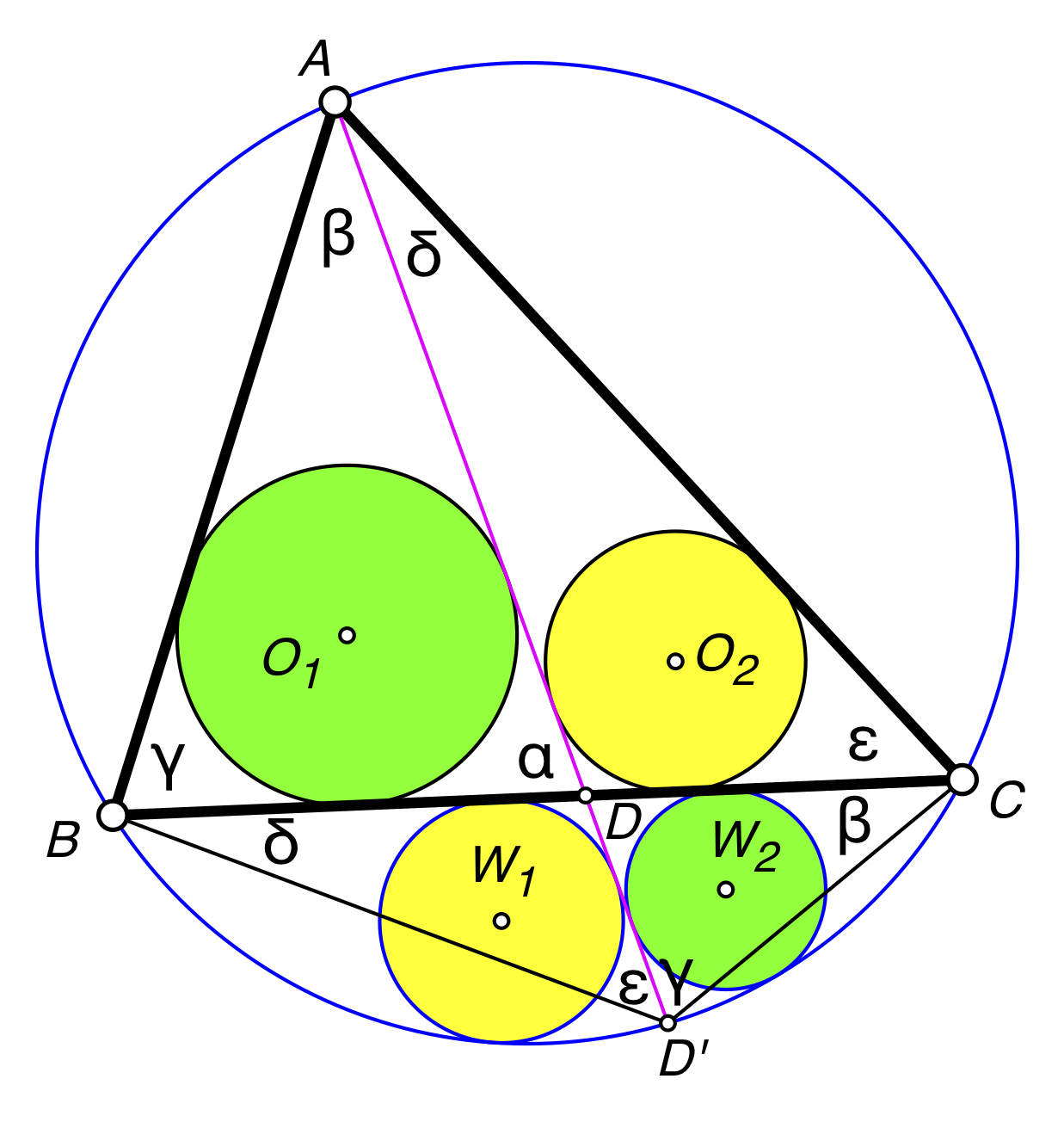}
\caption{angle names}
\label{fig:angleNames}
\end{figure}

Note that $\angle CDA=\alpha_2$ is supplementary to $\angle BDA=\alpha$,
so $\cos(\alpha_2/2)=\sin(\alpha/2)$.
Four applications of Theorem~\ref{thm:wedgeFormula} gives
$$
\begin{aligned}
w_1&=\frac{4R\sin(\beta/2)\cos(\gamma/2)\sin(\delta/2)\sin(\epsilon/2)}{\sin^2(\alpha/2)},\\
w_2&=\frac{4R\sin(\beta/2)\sin(\gamma/2)\sin(\delta/2)\cos(\epsilon/2)}{\cos^2(\alpha/2)},\\
r_1&=\frac{4R\sin(\beta/2)\sin(\gamma/2)\sin(\epsilon/2)\cos(\epsilon/2)}{\cos(\alpha/2)},\\
r_2&=\frac{4R\sin(\delta/2)\sin(\epsilon/2)\sin(\gamma/2)\cos(\gamma/2)}{\cos(\alpha/2)}.
\end{aligned}
$$
Now form the expression
$$S=\frac{1}{w_1}+\frac{1}{r_2}-\left(\frac{1}{w_2}+\frac{1}{r_1}\right).$$
Then substitute $\alpha=\delta+\epsilon$ and then $\epsilon=\pi-\beta-\gamma-\delta$.
Simplifying the resulting expression (using a computer algebra system), shows that $S=0$.
\end{proof}

The result of Theorem~\ref{thm:4circles} is so elegant
that it is unlikely that it is true only because the complicated trigonometric expression, $S$,
in the proof just happens to simplify to 0.

\begin{open}
Is there a simple proof of Theorem \ref{thm:4circles}
that does not involve a large amount of trigonometric computation requiring
computer simplification?
\end{open}

\void{
We also have the following related result.
\begin{theorem}
\label{thm:4moreCircles}
Let $D$ be any point on side $BC$ of $\triangle ABC$. Cevian $AD$ extended meets the circumcircle of $\triangle ABC$ at $D'$.
Let $W_1(w_1)$ be the circle inscribed in {\Wedge} $ADB$ and
let $W_2(w_2)$ be the circle inscribed in {\Wedge} $ADC$.
Let $O_1(r_1)$ be the circle inscribed in $\triangle BDD'$ and
let $O_2(r_2)$ be the circle inscribed in $\triangle CDD'$
(Figure~\ref{fig:4moreCircles}).
Then
$$\frac{1}{r_1}+\frac{1}{w_2}=\frac{1}{r_2}+\frac{1}{w_1}.$$
\end{theorem}
\begin{figure}[h!t]
\centering
\includegraphics[width=0.3\linewidth]{4moreCircles.png}
\caption{$1/r_1+1/w_2=1/r_2+1/w_1$}
\label{fig:4moreCircles}
\end{figure}
\begin{proof}
This is actually the same result as given by Theorem~\ref{thm:4circles} with labels $A$ and $D'$
interchanged.
\end{proof}
}

The following theorem appeared on a tablet in 1844 in the Aichi prefecture.
See \cite[problem~1.4.7]{Fukagawa-Pedoe} and \cite[p.~22]{Fukagawa-2}.

\begin{theorem}
\label{thm:Aichi1844}
Chords $AB$ and $CD$ of a circle meet at $E$. 
Let $W_1(w_1)$ be the incircle of {\Wedge} $BED$ and let $W_2(w_2)$ be the incircle of {\Wedge} $AEC$.
Let $O_1(r_1)$ be the incircle of $\triangle BED$ and let $O_2(r_2)$ be the incircle of $\triangle AEC$.
See Figure~\ref{fig:Aichi1844}.
Then
$$\frac{1}{r_1}+\frac{1}{w_2}=\frac{1}{r_2}+\frac{1}{w_1}.$$
\end{theorem}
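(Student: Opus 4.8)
The plan is to follow the same overall strategy as in Theorem~\ref{thm:4circles} --- introduce the arc measures, express all four radii through the wedge formula, and reduce the claimed identity to a trigonometric one --- but the configuration here is more symmetric, and exploiting that symmetry should avoid any computer algebra. Since the chords $AB$ and $CD$ cross inside the circle, their endpoints alternate around it, so I may take the cyclic order of the four points to be $A,C,B,D$. I would let the arcs $\arc{A}{C}$, $\arc{C}{B}$, $\arc{B}{D}$, $\arc{D}{A}$ have measures $2\gamma$, $2\beta$, $2\delta$, $2\alpha$, with $\alpha+\beta+\gamma+\delta=\pi$, and let $R$ be the radius of the circle.

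An angle chase then pins down everything. The two {\Wedges} $BED$ and $AEC$ are vertical, so they share the vertex angle $\angle BED=\angle AEC=\gamma+\delta$; the arc angle of $BED$ is $2\delta$ and that of $AEC$ is $2\gamma$, and each {\Wedge} is the opposite {\Wedge} of the other (extending $EB$ and $ED$ back through $E$ hits $A$ and $C$). By the inscribed angle theorem the triangles $BED$ and $AEC$ have the same angle set $\{\gamma+\delta,\beta,\alpha\}$, hence are similar; comparing the chords $BD=2R\sin\delta$ and $AC=2R\sin\gamma$ opposite the common angle gives the similarity ratio, so that $r_1:r_2=\sin\delta:\sin\gamma$.

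Next I would compute the radii. Applying Theorem~\ref{thm:wrFormula2} to each {\Wedge} gives
$$\frac{w_1}{r_1}=1+\tan\frac{\gamma+\delta}{2}\tan\frac{\delta}{2},\qquad \frac{w_2}{r_2}=1+\tan\frac{\gamma+\delta}{2}\tan\frac{\gamma}{2}.$$
To fix the absolute scale I would read off the circumradius of each triangle from the law of sines ($\rho_1=R\sin\delta/\sin(\gamma+\delta)$, $\rho_2=R\sin\gamma/\sin(\gamma+\delta)$) and then use $r=4\rho\sin\frac{A}{2}\sin\frac{B}{2}\sin\frac{C}{2}$; this writes $r_1$ and $r_2$ as a common factor $\kappa=4R\sin\frac{\alpha}{2}\sin\frac{\beta}{2}/\cos\frac{\gamma+\delta}{2}$ times $\sin\frac{\delta}{2}\cos\frac{\delta}{2}$ and $\sin\frac{\gamma}{2}\cos\frac{\gamma}{2}$, respectively. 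The claim is equivalent to $\tfrac{1}{r_1}-\tfrac{1}{w_1}=\tfrac{1}{r_2}-\tfrac{1}{w_2}$, and since $r_i$ and $w_i$ carry the same factor $\kappa$, this reduces to an identity in the two variables $\gamma$ and $\delta$ alone. Carrying out the reduction, I expect both sides to collapse to the single expression
$$\frac{\sin(\gamma+\delta)}{8R\,\sin\frac{\alpha}{2}\sin\frac{\beta}{2}\cos\frac{\gamma}{2}\cos\frac{\delta}{2}},$$
which is manifestly invariant under $\gamma\leftrightarrow\delta$; because interchanging $\gamma$ and $\delta$ swaps the two sides, they must be equal.

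The computation itself is light, so the main obstacle is really the geometric bookkeeping: correctly identifying, for each of the two vertical {\Wedges}, which arc is its arc angle and which is its opposite arc, and checking the angle chase that makes the two triangles similar. The one nontrivial algebraic step is the collapse above, which hinges on the product-to-sum identity $\cos\frac{\gamma}{2}-\cos\frac{\gamma+\delta}{2}\cos\frac{\delta}{2}=\sin\frac{\gamma+\delta}{2}\sin\frac{\delta}{2}$ (and its $\gamma\leftrightarrow\delta$ mate). Should the symmetric route prove awkward to write cleanly, a brute-force fallback exactly parallel to the proof of Theorem~\ref{thm:4circles} is always available --- four applications of Theorem~\ref{thm:wedgeFormula} followed by simplification --- though this sacrifices the explanation of why the relation holds.
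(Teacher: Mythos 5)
Your proposal is correct, and the computations you sketch do check out: with $r_1=\kappa\sin\frac{\delta}{2}\cos\frac{\delta}{2}$, $r_2=\kappa\sin\frac{\gamma}{2}\cos\frac{\gamma}{2}$ and the two applications of Theorem~\ref{thm:wrFormula2}, one gets
$\frac{1}{r_1}-\frac{1}{w_1}=\frac{\sin\frac{\gamma+\delta}{2}}{\kappa\cos\frac{\gamma}{2}\cos\frac{\delta}{2}}$,
which is exactly your symmetric expression and immediately yields the claim. Your route shares its first half with the paper's proof: both start from Theorem~\ref{thm:wrFormula2} and reduce the statement to showing $\frac{1}{r_1}-\frac{1}{w_1}=\frac{1}{r_2}-\frac{1}{w_2}$, i.e.\ to comparing $\tan(\alpha/2)\tan(\theta_1/4)/w_1$ with $\tan(\alpha/2)\tan(\theta_2/4)/w_2$. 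Where you diverge is in closing that gap: the paper simply quotes Theorem~\ref{thm:wwFormula} (the ratio $w_1/w_2=\tan(\theta_1/4)/\tan(\theta_2/4)$ for vertical {\Wedges}), which kills the comparison in one line, whereas you compute $r_1$ and $r_2$ explicitly from the law of sines and the standard inradius formula and verify the resulting identity by the $\gamma\leftrightarrow\delta$ symmetry. The paper's version is shorter, but it leans on Theorem~\ref{thm:wwFormula}, whose proof is itself only cited to the literature; your version is more computational but self-contained modulo Theorem~\ref{thm:wrFormula2}, and as a by-product it produces the closed form of the common value $\frac{1}{r_i}-\frac{1}{w_i}$, which the paper's argument does not. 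The similarity observation $r_1:r_2=\sin\delta:\sin\gamma$ is redundant once you have the explicit formulas, but harmless.
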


\begin{figure}[h!t]
\centering
\includegraphics[width=0.4\linewidth]{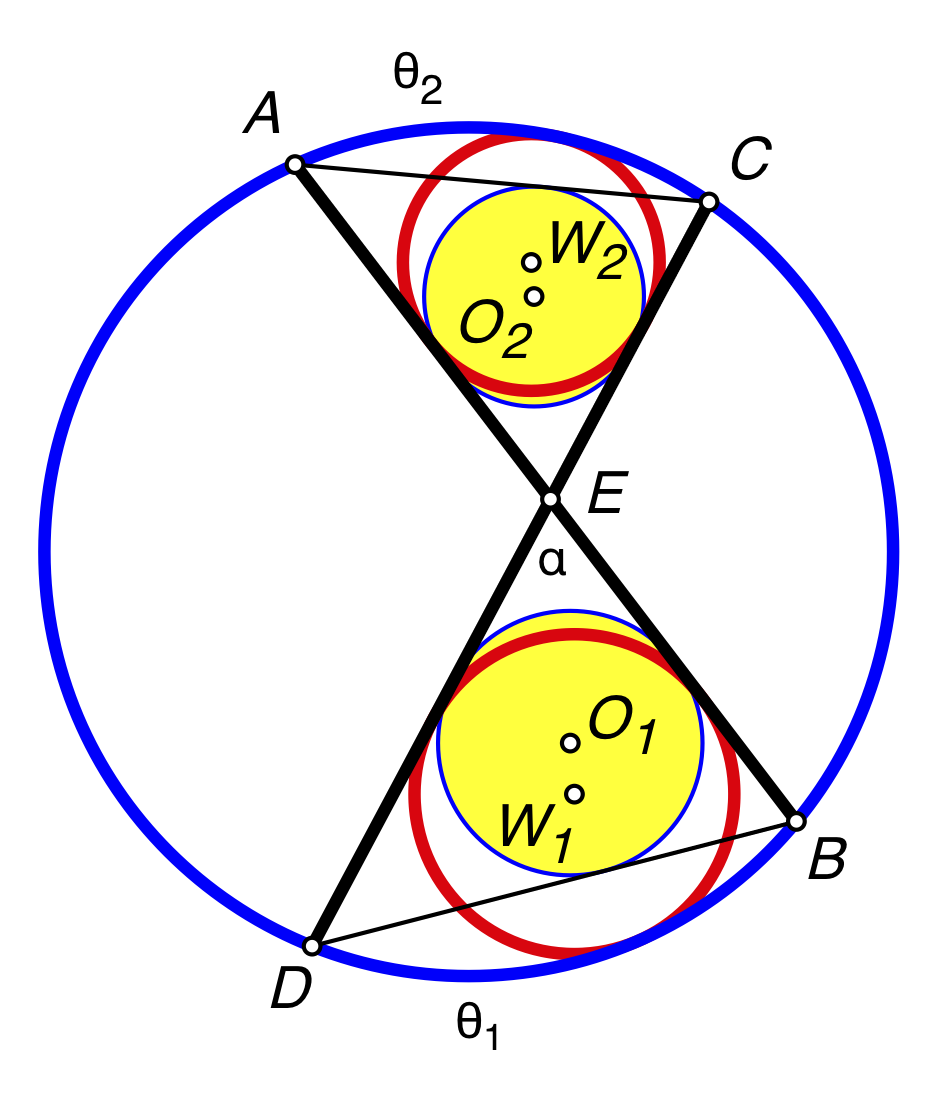}
\caption{$1/r_1+1/w_2=1/r_2+1/w_1$}
\label{fig:Aichi1844}
\end{figure}

\begin{proof}
This proof comes from \cite[pp.~26--27]{Unger30}.
Let $\alpha$ be the vertex angle of {\Wedge} $BED$. Let $\theta_1$ be its arc angle an let $\theta_2$
be the arc angle of the opposite {\Wedge} $AEC$.
By Theorem~\ref{thm:wrFormula2}, we have
$$\frac{w_1}{r_1}=1+\tan(\alpha/2)\tan(\theta_1/4)$$
which is equivalent to
$$\frac{w_1}{r_1}-1=\tan(\alpha/2)\tan(\theta_1/4)$$
or
$$\frac{1}{r_1}-\frac{1}{w_1}=\frac{\tan(\alpha/2)\tan(\theta_1/4)}{w_1}.$$
Similarly,
$$\frac{1}{r_2}-\frac{1}{w_2}=\frac{\tan(\alpha/2)\tan(\theta_2/4)}{w_2}.$$
But $$\frac{\tan(\theta_1/4)}{w_1}=\frac{\tan(\theta_2/4)}{w_2}$$
by Theorem~\ref{thm:wwFormula} (which will be proved in the next section). Thus,
$$\frac{1}{r_1}-\frac{1}{w_1}=\frac{1}{r_2}-\frac{1}{w_2}$$
and the result follows.
\end{proof}

\begin{theorem}
\label{thm:crossedChords}
Chords $AB$ and $CD$ of a circle meet at $E$, with $\angle AEC=\alpha$. 
Let $W_1(w_1)$ be the incircle of {\Wedge} $BED$ and let $W_2(w_2)$ be the incircle of {\Wedge} $AEC$.
Let $O_1(r_1)$ be the incircle of $\triangle BED$ and let $O_2(r_2)$ be the incircle of $\triangle AEC$.
See Figure~\ref{fig:Aichi1844}.
Then
$$r_1r_2=w_1w_2\cos^2\frac{\alpha}{2}.$$
\end{theorem}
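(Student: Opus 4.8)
The plan is to apply Theorem~\ref{thm:wrFormula} separately to each of the two {\Wedges} and then multiply the two resulting ratios, exploiting the fact that the two {\Wedges} share a vertex angle while interchanging the roles of their arc and opposite arc.

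First I would record the data at the vertex $E$. Since $A$, $E$, $B$ are collinear and $C$, $E$, $D$ are collinear, the angles $\angle BED$ and $\angle AEC$ are vertical angles and hence equal; thus both {\Wedge} $BED$ and {\Wedge} $AEC$ have vertex angle $\alpha$. Next I would match up the arcs. The arc of {\Wedge} $BED$ is $\arc{B}{D}$ and the arc of {\Wedge} $AEC$ is $\arc{A}{C}$. Extending the sides $EB$ and $ED$ of {\Wedge} $BED$ backward through $E$ produces the rays $EA$ and $EC$, so the opposite {\Wedge} of $BED$ is precisely {\Wedge} $AEC$ and its opposite arc is $\arc{A}{C}$; symmetrically, the opposite arc of {\Wedge} $AEC$ is $\arc{B}{D}$.

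Writing $\theta_1=m(\arc{B}{D})$ and $\theta_2=m(\arc{A}{C})$, Theorem~\ref{thm:wrFormula} then yields
$$\frac{w_1}{r_1}=\frac{\cos(\theta_2/4)}{\cos(\alpha/2)\cos(\theta_1/4)},\qquad \frac{w_2}{r_2}=\frac{\cos(\theta_1/4)}{\cos(\alpha/2)\cos(\theta_2/4)}.$$
Multiplying these two equations, the factors $\cos(\theta_1/4)$ and $\cos(\theta_2/4)$ cancel completely, leaving
$$\frac{w_1w_2}{r_1r_2}=\frac{1}{\cos^2(\alpha/2)},$$
which rearranges at once to the asserted identity $r_1r_2=w_1w_2\cos^2(\alpha/2)$.

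The one place I would take care is the bookkeeping in the middle step: I must confirm that the vertex angles genuinely coincide (being vertical, not supplementary) and that the arc of each {\Wedge} is exactly the opposite arc of the other, so that $\theta_1$ and $\theta_2$ swap roles when passing from one {\Wedge} to the other. Once that correspondence is pinned down, the clean cancellation does all the remaining work, and I expect no trigonometric simplification beyond what is already visible in the product — a pleasant contrast to the computer-assisted argument behind Theorem~\ref{thm:4circles}.
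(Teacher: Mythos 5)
Your proposal is correct and is essentially identical to the paper's own proof: both apply Theorem~\ref{thm:wrFormula} to each {\Wedge}, using the fact that the two {\Wedges} share the vertex angle $\alpha$ and that each one's arc is the other's opposite arc, and then multiply the two ratios so the $\cos(\theta_i/4)$ factors cancel. Your extra care in verifying the vertical-angle and arc-swapping bookkeeping is exactly the (unstated) justification underlying the paper's two displayed applications of that theorem.
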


\begin{proof}
Let $m(\arc{D}{B})=\theta_1$ and $m(\arc{C}{A})=\theta_2$.
Applying Theorem~\ref{thm:wrFormula} to {\Wedge} $BED$ gives
$$\frac{w_1}{r_1}=\frac{\cos(\theta_2/4)}{\cos(\alpha/2)\cos(\theta_1/4)}.$$
Applying Theorem~\ref{thm:wrFormula} to {\Wedge} $AEC$ gives
$$\frac{w_2}{r_2}=\frac{\cos(\theta_1/4)}{\cos(\alpha/2)\cos(\theta_2/4)}.$$
Multiplying these two equations 
gives
$$\frac{w_1w_2}{r_1r_2}=\frac{1}{\cos^2(\alpha/2)}$$
and the result follows by cross-multiplying.
\end{proof}

\begin{theorem}
\label{thm:4circlesEqualAngles}
Cevians $AD$ and $CF$ in $\triangle ABC$ meet at $P$ and $\angle BFC=\angle BDA$. The cevians are extended to meet the circumcircle of $\triangle ABC$ at points $D'$ and $F'$, respectively,
as shown in Figure~\ref{fig:4circlesEqualAngles}.
Let $W_1(w_1)$ be the incircle of {\Wedge} $BDD'$ and let $W_2(w_2)$ be the incircle of {\Wedge} $BFF'$.
Let $O_1(r_1)$ be the incircle of $\triangle BDD'$ and let $O_2(r_2)$ be the incircle of $\triangle BFF'$.
Then
$$\frac{w_1}{r_1}=\frac{w_2}{r_2}.$$
\end{theorem}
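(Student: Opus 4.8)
The plan is to invoke Theorem~\ref{thm:wrFormula2}, which expresses the ratio $w/r$ for any {\Wedge} purely in terms of its vertex angle $\alpha$ and its arc angle $\theta_1$, namely $w/r = 1 + \tan(\alpha/2)\tan(\theta_1/4)$. Since this ratio is a function of $(\alpha,\theta_1)$ alone, it suffices to prove that {\Wedge} $BDD'$ and {\Wedge} $BFF'$ have (i) equal vertex angles and (ii) equal arc angles; the equality $w_1/r_1=w_2/r_2$ then follows immediately, with no trigonometric computation.

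For the vertex angles, I would note that because $A$, $D$, $D'$ are collinear with $D$ between $A$ and $D'$, the vertex angle of {\Wedge} $BDD'$ is $\angle BDD' = \pi - \angle BDA$. Likewise, since $C$, $F$, $F'$ are collinear with $F$ between $C$ and $F'$, the vertex angle of {\Wedge} $BFF'$ is $\angle BFF' = \pi - \angle BFC$. The hypothesis $\angle BFC = \angle BDA$ then makes the two vertex angles equal; call their common value $\alpha$.

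For the arc angles, the key step is to translate each arc into an inscribed angle. The arc of {\Wedge} $BDD'$ is $\arc{B}{D'}$, and $A$ lies on the circle on the arc complementary to $\arc{B}{D'}$, so by the inscribed angle theorem $\theta_1 = m(\arc{B}{D'}) = 2\angle BAD' = 2\angle BAD$; the last equality holds because $D$ lies on ray $AD'$. Similarly $\theta_2 = m(\arc{B}{F'}) = 2\angle BCF$. Now I would angle-chase in the two triangles meeting at the common vertex $B$: in $\triangle ABD$ we have $\angle BAD = \pi - \angle ABD - \angle BDA$, and in $\triangle CBF$ we have $\angle BCF = \pi - \angle FBC - \angle BFC$. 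Since $D$ lies on $BC$ and $F$ lies on $AB$, both $\angle ABD$ and $\angle FBC$ equal the triangle's angle at $B$, and the hypothesis gives $\angle BDA = \angle BFC$; hence $\angle BAD = \angle BCF$, so $\theta_1 = \theta_2$.

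With both the vertex angles and the arc angles matching, Theorem~\ref{thm:wrFormula2} yields $w_1/r_1 = 1 + \tan(\alpha/2)\tan(\theta_1/4) = 1 + \tan(\alpha/2)\tan(\theta_2/4) = w_2/r_2$, as desired. I expect the only delicate point to be the configuration bookkeeping in the arc-to-inscribed-angle step: one must confirm that $D'$ (resp.\ $F'$) lies on the arc cut off on the far side of the cevian from $A$ (resp.\ $C$), so that the inscribed angle at $A$ (resp.\ $C$) captures exactly the arc of the {\Wedge} and not its reflex. Once the configuration is pinned down, the whole argument is pure angle-chasing.
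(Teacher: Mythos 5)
Your proof is correct, and it follows the same overall strategy as the paper: show that the two {\Wedges} have equal vertex angles and equal arc angles, then conclude from one of the $w/r$ formulas of Section~2. The differences are in the tools used at each step, and they are worth noting. The paper invokes Theorem~\ref{thm:wrFormula}, whose formula also involves the arc angle of the \emph{opposite} {\Wedge}; this forces it to observe that both {\Wedges} happen to share the same opposite arc $\arc{C}{A}$ (so the factor $\cos(\phi/4)$ is common to both ratios), and it then derives $\theta_1=\theta_2$ from the crossed-chords relation $\alpha_i=(\theta_i+\phi)/2$. You instead use Theorem~\ref{thm:wrFormula2}, which depends only on the {\Wedge}'s own vertex angle and arc angle, so the shared opposite arc never enters; and you get $\theta_1=\theta_2$ by converting each arc to an inscribed angle ($\theta_1=2\angle BAD$, $\theta_2=2\angle BCF$) and running the angle sum in triangles $ABD$ and $CBF$, which share the angle at $B$. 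Your route is marginally more self-contained, since it needs one fewer geometric observation about the configuration; the paper's route gets $\theta_1=\theta_2$ in one line from the crossed-chords formula. Both the vertex-angle step (supplements of the hypothesized equal angles) and the configuration caveat you flag at the end (that $D'$ and $F'$ lie on the arcs cut off on the far side of the cevians) are handled correctly.
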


\begin{figure}[h!t]
\centering
\includegraphics[width=0.5\linewidth]{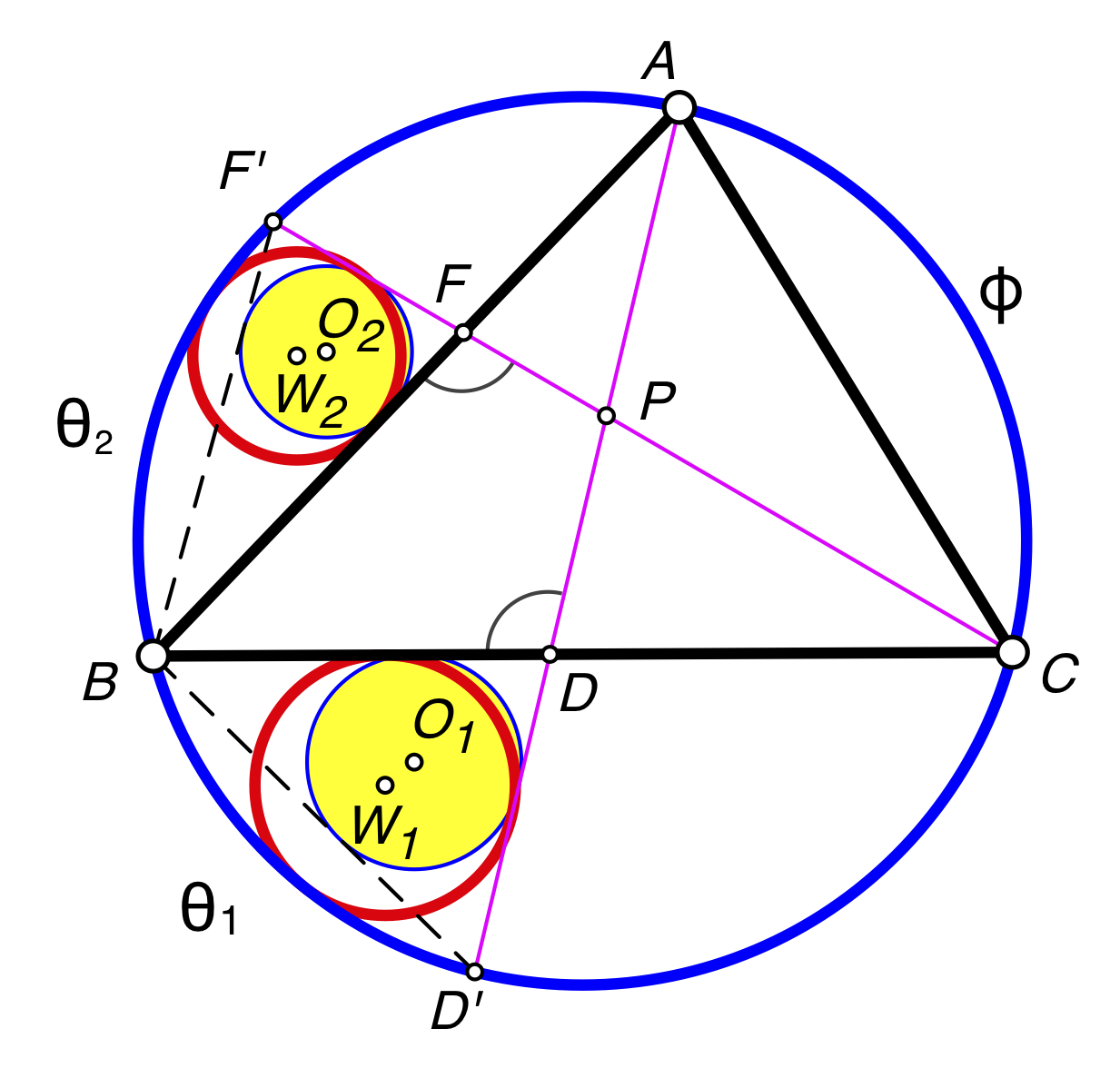}
\caption{$w_1/r_1=w_2/r_2$}
\label{fig:4circlesEqualAngles}
\end{figure}

\begin{proof}
Let $m(\arc{B}{D'})=\theta_1$, $m(\arc{F'}{B})=\theta_2$, and $m(\arc{C}{A})=\phi$.
Applying Theorem~\ref{thm:wrFormula} to {\Wedge} $BDD'$ using $\alpha_1=\angle BDD'$ gives
$$\frac{w_1}{r_1}=\frac{\cos(\phi/4)}{\cos(\alpha_1/2)\cos(\theta_1/4)}.$$
Applying Theorem~\ref{thm:wrFormula} to {\Wedge} $BFF'$ using $\alpha_2=\angle BFF'$ gives
$$\frac{w_2}{r_2}=\frac{\cos(\phi/4)}{\cos(\alpha_2/2)\cos(\theta_2/4)}.$$
But $\alpha_1=\alpha_2$ because they are supplementary to the two given angles.
Chords $AD'$ and $BC$ intercept arcs of measures $\theta_1$ and $\phi$,
so $\alpha_1=(\theta_1+\phi)/2$. Similarly, $\alpha_2=(\theta_2+\phi)/2$.
Thus, $\theta_1=\theta_2$ because $\alpha_1=\alpha_2$.
Therefore, $w_1/r_1=w_2/r_2$.
\end{proof}

\begin{theorem}
\label{thm:segs-4Hcircles}
Let $H$ be the orthocenter of acute triangle $ABC$. The altitudes $AD$ and $CF$ are extended to meet the circumcircle of $\triangle ABC$ at points $D'$ and $F'$, respectively.
Let $W_1(w_1)$ be the incircle of {\Wedge} $BDD'$ and let $W_2(w_2)$ be the incircle of {\Wedge} $BFF'$.
Let $O_1(r_1)$ be the incircle of $\triangle BDH$ and let $O_2(r_2)$ be the incircle of $\triangle BFH$.
See Figure~\ref{fig:segs-4Hcircles}.
Then
$$\frac{w_1}{r_1}=\frac{w_2}{r_2}.$$
\end{theorem}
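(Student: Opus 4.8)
The plan is to reduce this statement to Theorem~\ref{thm:4circlesEqualAngles}. That theorem already delivers $w_1/\rho_1=w_2/\rho_2$ whenever $\angle BDA=\angle BFC$, except that its triangles are $\triangle BDD'$ and $\triangle BFF'$, with inradii I will temporarily write as $\rho_1$ and $\rho_2$, whereas the present theorem uses $\triangle BDH$ and $\triangle BFH$. So two things must be verified: that the hypothesis of Theorem~\ref{thm:4circlesEqualAngles} holds in the orthocentric configuration, and that replacing $D'$ by $H$ (and $F'$ by $H$) leaves the relevant inradii unchanged.

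First I would observe that, $AD$ and $CF$ being altitudes, $\angle BDA=90^\circ=\angle BFC$, so the equal-angle hypothesis of Theorem~\ref{thm:4circlesEqualAngles} is met with $P=H$. The two skewed sectors $BDD'$ and $BFF'$ are literally the same figures in both theorems, so $w_1$ and $w_2$ are unaffected, and applying Theorem~\ref{thm:4circlesEqualAngles} gives $w_1/\rho_1=w_2/\rho_2$.

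The crux is the classical reflection property of the orthocenter of an acute triangle: the point $D'$ where line $AD$ meets the circumcircle a second time is the mirror image of $H$ in $BC$, so that $DH=DD'$; symmetrically $F'$ is the reflection of $H$ in $AB$, so $FH=FF'$. I would justify $DH=DD'$ by the short inscribed-angle computation $\angle D'BC=\angle D'AC=90^\circ-C=\angle HBC$ (where $C$ denotes the angle at vertex $C$), which shows that $BD$ simultaneously bisects $\angle HBD'$ and, being part of $BC$, is perpendicular to $HD'$ at $D$; a cevian that is both angle bisector and altitude forces $D$ to be the midpoint of $HD'$. Since $AD\perp BC$, the triangles $\triangle BDH$ and $\triangle BDD'$ are right triangles with their right angles at $D$, a common leg $BD$, and equal legs $DH=DD'$; hence they are congruent and share the same inradius, giving $r_1=\rho_1$. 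The identical argument on the altitude $CF$ gives $r_2=\rho_2$. Substituting into $w_1/\rho_1=w_2/\rho_2$ yields $w_1/r_1=w_2/r_2$.

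The only step that is not purely mechanical is the reflection lemma, and even that is entirely standard; the heart of the argument is simply that congruent triangles have equal inradii, which lets the orthocenter configuration inherit the conclusion of Theorem~\ref{thm:4circlesEqualAngles}. I expect no computational obstacle, since all of the trigonometric work is already packaged inside the earlier theorem.
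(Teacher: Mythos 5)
Your proposal is correct and follows essentially the same route as the paper: both reduce to Theorem~\ref{thm:4circlesEqualAngles} via the right angles at $D$ and $F$, and both then establish $\triangle BDH\cong\triangle BDD'$ (and likewise at $F$) using the reflection property of the orthocenter, via the same inscribed-angle computation $\angle D'BC=\angle D'AC=90^\circ-\angle ACB=\angle HBC$. The only cosmetic difference is that you finish the congruence by showing $D$ is the midpoint of $HD'$ (SAS), whereas the paper uses the equal angles at $B$ directly (ASA).
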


\begin{figure}[h!t]
\centering
\includegraphics[width=0.45\linewidth]{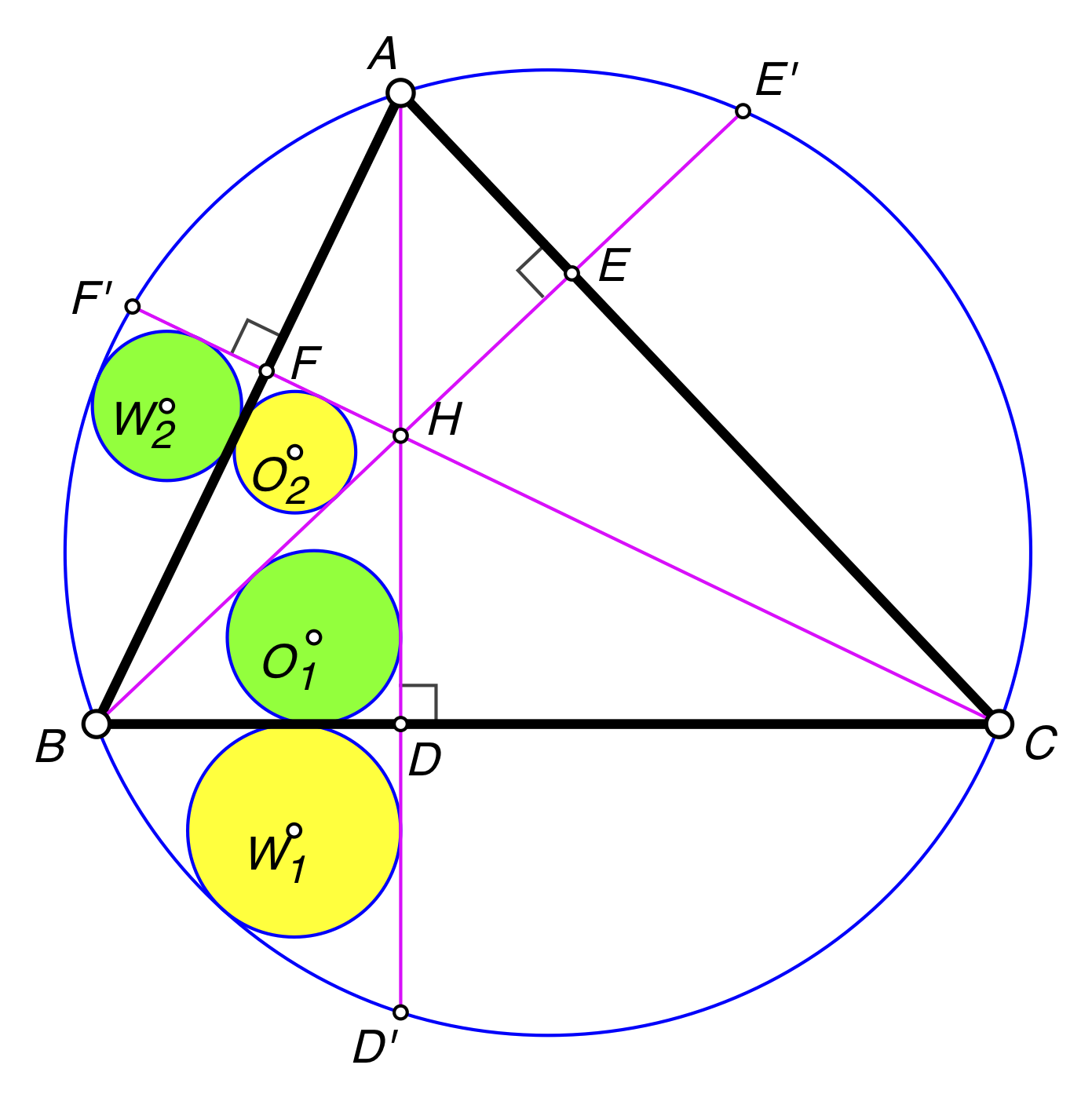}
\caption{$w_1/r_1=w_2/r_2$}
\label{fig:segs-4Hcircles}
\end{figure}

\begin{proof}
Let $r'_1$ be the inradius of $\triangle BDD'$ and let $r'_2$ be the inradius of $\triangle BFF'$.
Since $\angle BFC=\angle BDA$, by Theorem~\ref{thm:4circlesEqualAngles}, we have
$w_1/r'_1=w_2/r'_2$.
Now $\angle CBD'=\angle CAD'$ since both angles subtend the same arc.
But $\angle CAD'=\angle CBE'$ since both angles are complementary to $\angle ACB$.
Thus, $\angle DBD'=\angle DBH$. Right triangles $BDD'$ and $BDH$ share a common side.
Thus $\triangle BDD'\cong\triangle BDH$. Hence $r_1=r'_1$. Similarly, $r_2=r'_2$.
Therefore, $w_1/r_1=w_2/r_2$.
\end{proof}

\begin{lemma}
\label{lemma:4Hcircles}
Let $H$ be the orthocenter of acute $\triangle ABC$, and let the altitudes be $AD$, $BE$, and $CF$ as shown in Figure~\ref{fig:4Hcircles}. Circles $O_1(r_1)$, $O_2(r_2)$, $O_3(r_3)$, and $O_4(r_4)$ are inscribed in triangles $BHD$, $BHF$, $CAF$, and $ACD$, respectively.
Then
$r_1/r_2=r_4/r_3$.
\end{lemma}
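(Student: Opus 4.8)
The plan is to observe that all four triangles $BHD$, $BHF$, $CAF$, and $ACD$ are right triangles and to exhibit two pairs of similar triangles whose similarity ratios coincide. Since the inradius is a length, it scales linearly under similarity, so once the two ratios are shown to be equal, the cross-multiplied identity $r_1 r_3 = r_2 r_4$, equivalently $r_1/r_2 = r_4/r_3$, follows at once, with no trigonometric summation required. (Acuteness of $\triangle ABC$ is used only to guarantee that $D$ and $F$ fall in the interiors of their sides, so the configuration is as drawn.)

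First I would pin down the relevant angles using standard orthocenter facts. Write $A$, $B$, $C$ for the angles of $\triangle ABC$ and set $b = CA$. Because $AD \perp BC$ and $CF \perp AB$, the triangles $ACD$ and $CAF$ are right-angled at $D$ and $F$ respectively, with $\angle ACD = C$ and $\angle CAF = A$, and their common hypotenuse is $CA = b$. For the triangles containing $H$, the segment $BH$ lies along the altitude $BE$, so in the right triangle $BEC$ one has $\angle HBD = \angle EBC = 90\degrees - C$, whence $\triangle BHD$ is right-angled at $D$ with $\angle BHD = C$. Likewise $\angle HBF = \angle EBA = 90\degrees - A$, so $\triangle BHF$ is right-angled at $F$ with $\angle BHF = A$. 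In both of these triangles the hypotenuse is $BH$.

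These angle identities give two similarities: $\triangle BHD \sim \triangle ACD$ (both right triangles with an acute angle $C$) and $\triangle BHF \sim \triangle CAF$ (both right triangles with an acute angle $A$). The similarity ratio in the first is the ratio of hypotenuses $BH/b$, and in the second it is again $BH/b$. Since inradii scale with the similarity ratio, $r_1 = (BH/b)\,r_4$ and $r_2 = (BH/b)\,r_3$. Dividing these two equations eliminates the common factor $BH/b$ and yields $r_1/r_2 = r_4/r_3$, as claimed.

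I expect the only real obstacle to be the bookkeeping of the angles at $H$, namely verifying $\angle BHD = C$ and $\angle BHF = A$. These follow cleanly from the fact that $BH$ runs along the altitude from $B$, together with the complementary-angle relations in the right triangles $BEC$ and $BEA$. The key conceptual point is that the two similarity ratios turn out to be identical, because $BH$ serves as the shared hypotenuse of the two $H$-triangles while $CA = b$ serves as the shared hypotenuse of the two vertex-triangles.
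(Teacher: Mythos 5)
Your proposal is correct and follows essentially the same route as the paper: both arguments rest on the two similarities $\triangle BHD\sim\triangle ACD$ and $\triangle BHF\sim\triangle CAF$, observe that each has ratio $BH/CA$, and conclude that the inradii scale by that common factor. The only difference is that you supply the angle-chasing details verifying the similarities, which the paper leaves implicit.
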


\begin{figure}[h!t]
\centering
\includegraphics[width=0.5\linewidth]{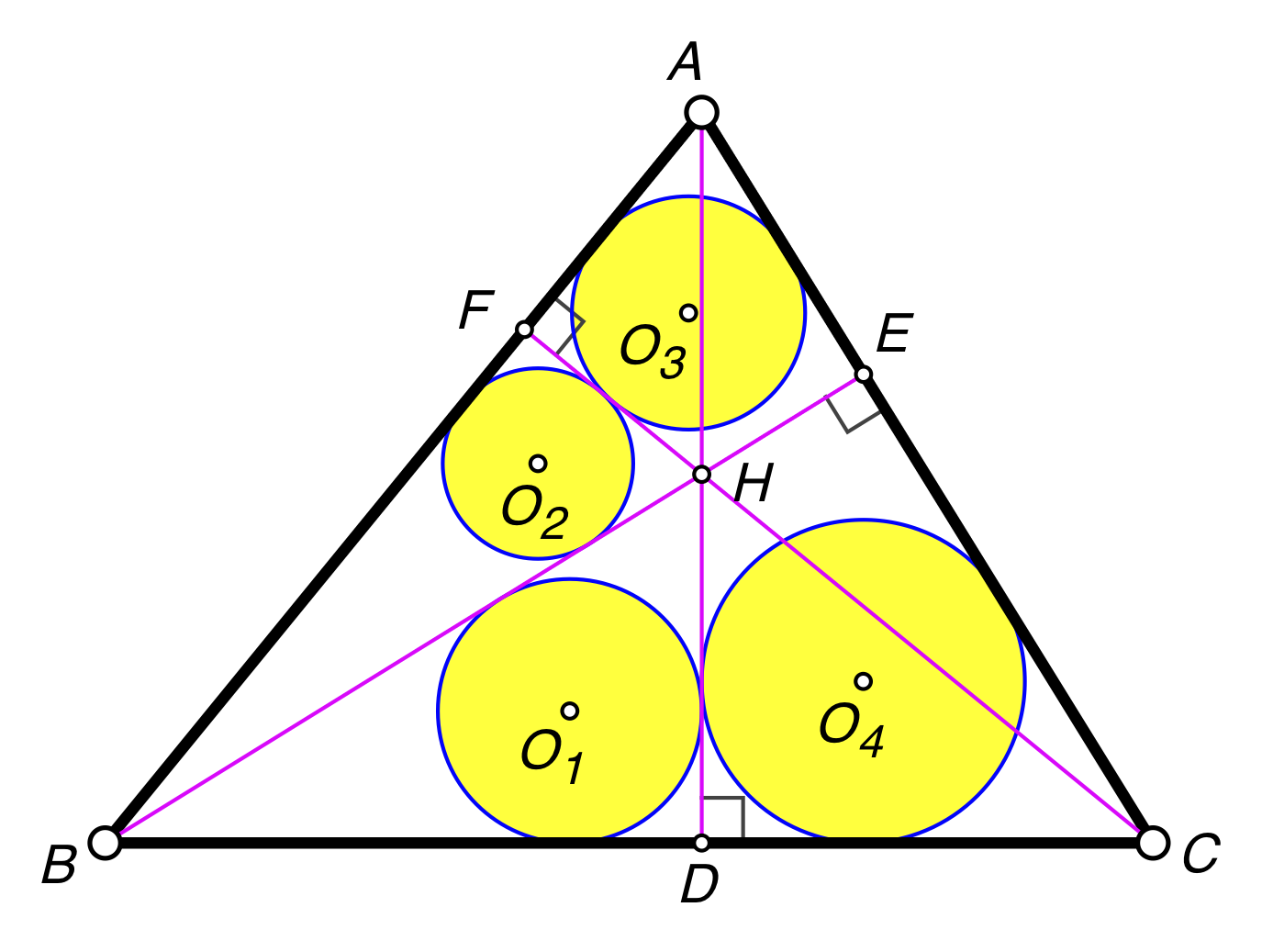}
\caption{$r_1/r_2=r_4/r_3$}
\label{fig:4Hcircles}
\end{figure}

\begin{proof}
Note that $\triangle BHF\sim\triangle CAF$. Therefore the figure consisting
of $\triangle BHF$ and its incircle is similar to the figure consisting of $\triangle CAF$
and its incircle. Corresponding parts of similar figures are in proportion, so $r_2/r_3=BH/CA$.
In the same manner, $\triangle BHD\sim\triangle ACD$ which implies that $r_1/r_4=BH/AC$.
Therefore, $r_2/r_3=r_1/r_4$ or $r_1/r_2=r_4/r_3$.
\end{proof}

\begin{corollary}
\label{cor:segs-4H}
Let $H$ be the orthocenter of acute $\triangle ABC$. The altitudes $AD$ and $CF$ are extended to meet the circumcircle of $\triangle ABC$ at points $D'$ and $F'$, respectively.
Let $W_1(w_1)$ be the incircle of {\Wedge} $BDD'$ and let $W_2(w_2)$ be the incircle of {\Wedge} $BFF'$.
Let $O_3(r_3)$ be the incircle of $\triangle AFC$ and let $O_4(r_4)$ be the incircle of $\triangle ADC$.
See Figure~\ref{fig:segs-4H}.
Then
$w_1/w_2=r_4/r_3$.
\end{corollary}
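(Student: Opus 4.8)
The plan is to recognize this corollary as an immediate consequence of chaining together the two results that directly precede it, namely Theorem~\ref{thm:segs-4Hcircles} and Lemma~\ref{lemma:4Hcircles}, which share a common pair of inradii. No new geometry is needed; the entire content is already packaged in those two statements, and the only work is careful bookkeeping to match up the triangles.

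First I would introduce auxiliary names for the two inradii that link the results, choosing letters distinct from the corollary's $r_3$ and $r_4$. Let $\rho_1$ be the inradius of $\triangle BDH$ and let $\rho_2$ be the inradius of $\triangle BFH$. The hypotheses here---$H$ the orthocenter of acute $\triangle ABC$, altitudes $AD$ and $CF$ extended to meet the circumcircle at $D'$ and $F'$, with $W_1(w_1)$ and $W_2(w_2)$ the incircles of {\Wedges} $BDD'$ and $BFF'$---are precisely those of Theorem~\ref{thm:segs-4Hcircles}. Applying that theorem verbatim gives $w_1/\rho_1 = w_2/\rho_2$, which rearranges to $w_1/w_2 = \rho_1/\rho_2$.

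Next I would invoke Lemma~\ref{lemma:4Hcircles}. In the lemma's own labeling, its four inradii belong to $\triangle BHD$, $\triangle BHF$, $\triangle CAF$, and $\triangle ACD$. These coincide, respectively, with $\rho_1$, $\rho_2$, and the corollary's $r_3$ (incircle of $\triangle AFC = \triangle CAF$) and $r_4$ (incircle of $\triangle ADC = \triangle ACD$). The lemma then yields $\rho_1/\rho_2 = r_4/r_3$. Combining the two displayed relations gives $w_1/w_2 = \rho_1/\rho_2 = r_4/r_3$, which is exactly the asserted identity.

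I expect no genuine obstacle in this proof; the difficulty has already been absorbed into Theorem~\ref{thm:segs-4Hcircles} (which itself rests on Theorem~\ref{thm:4circlesEqualAngles} and the congruence $\triangle BDD'\cong\triangle BDH$) and into Lemma~\ref{lemma:4Hcircles} (which rests on the similarities $\triangle BHF\sim\triangle CAF$ and $\triangle BHD\sim\triangle ACD$). The one point demanding care is the identification of triangles across the three statements: recognizing $\triangle BHD=\triangle BDH$, $\triangle CAF=\triangle AFC$, and $\triangle ACD=\triangle ADC$, and keeping the intermediate inradii $\rho_1,\rho_2$ notationally separate from the corollary's $r_3,r_4$ so that the cancellation through $\rho_1/\rho_2$ is transparent.
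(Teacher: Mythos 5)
Your proof is correct and is essentially identical to the paper's: the paper also chains Theorem~\ref{thm:segs-4Hcircles} (giving $w_1/w_2 = r_1/r_2$ for the inradii of $\triangle BDH$ and $\triangle BFH$) with Lemma~\ref{lemma:4Hcircles} (giving $r_1/r_2 = r_4/r_3$). Your extra care in renaming the intermediate inradii $\rho_1,\rho_2$ is a minor notational improvement but changes nothing substantive.
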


\begin{figure}[h!t]
\centering
\includegraphics[width=0.5\linewidth]{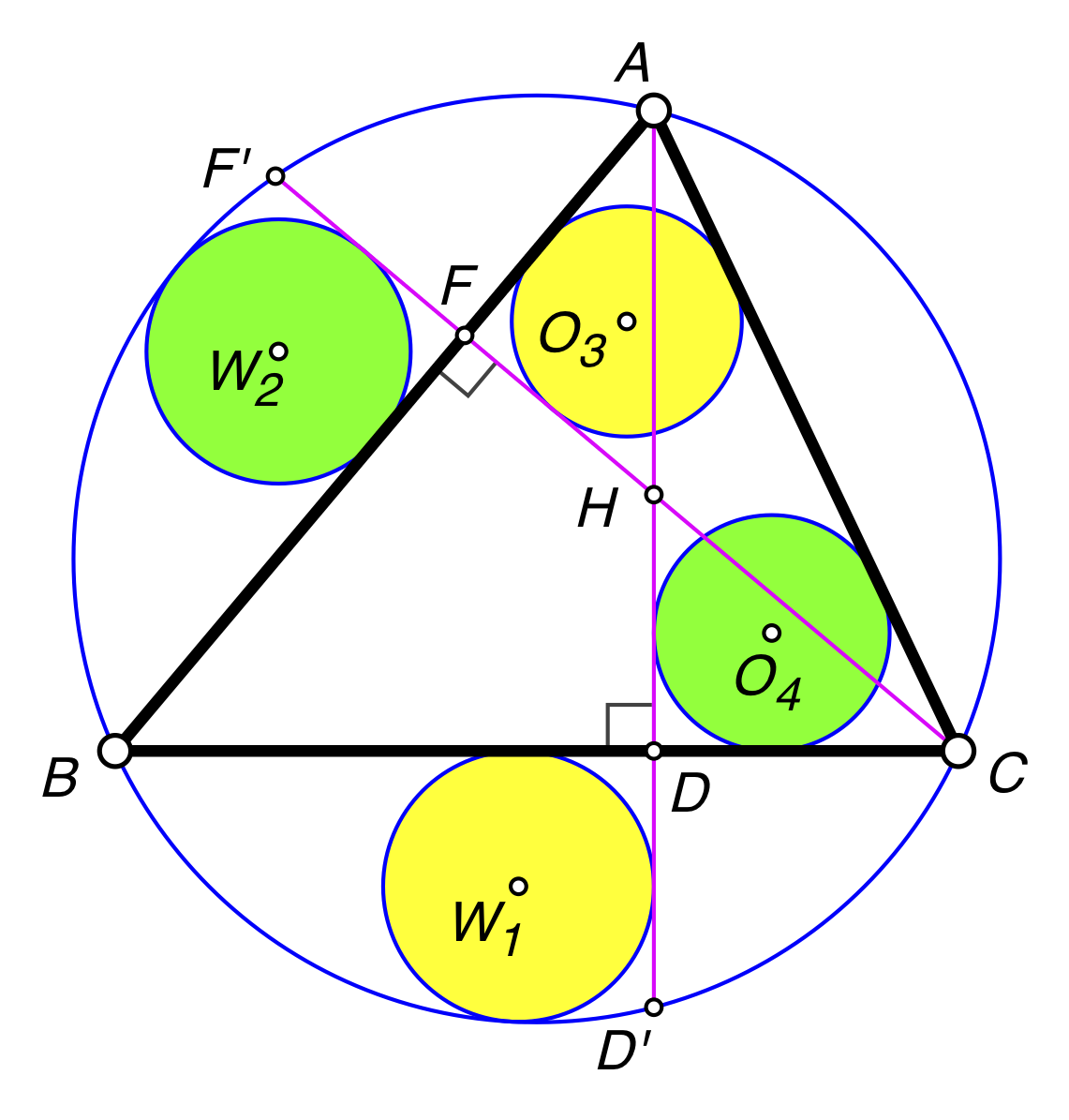}
\caption{$w_1/w_2=r_4/r_3$}
\label{fig:segs-4H}
\end{figure}

\begin{proof}
By Theorem \ref{thm:segs-4Hcircles}, $w_1/w_2=r_1/r_2$. By Lemma~\ref{lemma:4Hcircles},
$r_1/r_2=r_4/r_3$. Therefore, $w_1/w_2=r_4/r_3$.
\end{proof}

\section{Relationships Between the Incircles of Two Skewed Sectors}

\begin{theorem}
\label{thm:Aichi1844trig}
Chords $AB$ and $CD$ of a circle meet at $E$. 
Let $W_1(w_1)$ be the circle inscribed in {\Wedge} $DEB$ and let $W_2(w_2)$ be circle inscribed in {\Wedge} $AEC$, as shown in Figure~\ref{fig:Aichi1844trig}.
Then
$$\frac{w_1}{w_2}=\frac{\tan(\theta_1/2)}{\tan(\theta_2/2)}$$
where $\angle BCD=\theta_1$ and $\angle ADC=\theta_2$.
\end{theorem}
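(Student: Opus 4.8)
The plan is to apply Theorem~\ref{thm:wrFormula} to each of the two \Wedges, divide the two relations so that the shared vertex angle cancels, and then eliminate the leftover triangle inradii with a similar-triangles argument. To set this up I would first record two geometric facts. Since $E$ is the intersection of the chords, $\angle DEB$ and $\angle AEC$ are vertical angles, so the two \Wedges\ have a common vertex angle $\alpha$; and extending the sides of \Wedge\ $DEB$ back through $E$ carries them along $CD$ and $AB$ to $C$ and $A$, so \Wedge\ $AEC$ is exactly the opposite \Wedge\ of \Wedge\ $DEB$, and conversely. I would then convert the inscribed angles to arc angles: $\theta_1=\angle BCD$ subtends $\arc{B}{D}$, the arc of \Wedge\ $DEB$, so that arc has measure $2\theta_1$, and likewise the arc $\arc{C}{A}$ of \Wedge\ $AEC$ has measure $2\theta_2$.

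With these identifications, Theorem~\ref{thm:wrFormula} applied to \Wedge\ $DEB$ (arc angle $2\theta_1$, opposite arc angle $2\theta_2$) and to \Wedge\ $AEC$ (arc angle $2\theta_2$, opposite arc angle $2\theta_1$) gives expressions for $w_1/r_1$ and $w_2/r_2$ that both carry the factor $\cos(\alpha/2)$. Dividing one by the other cancels this factor and leaves
$$\frac{w_1}{w_2}=\frac{r_1}{r_2}\cdot\frac{\cos^2(\theta_2/2)}{\cos^2(\theta_1/2)},$$
where $r_1$ and $r_2$ are the inradii of $\triangle DEB$ and $\triangle AEC$.

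It remains to evaluate $r_1/r_2$. I would show $\triangle DEB\sim\triangle AEC$: the angles at $E$ are equal vertical angles, and $\angle EDB=\angle CDB=\angle CAB=\angle EAC$ because $\angle CDB$ and $\angle CAB$ subtend the common arc $\arc{C}{B}$. Under this similarity $DB$ and $AC$ are corresponding sides, so $r_1/r_2=DB/AC$; since $DB=2R\sin\theta_1$ and $AC=2R\sin\theta_2$ are the chords of the arcs of measure $2\theta_1$ and $2\theta_2$, this gives $r_1/r_2=\sin\theta_1/\sin\theta_2$. Substituting and simplifying with $\sin\theta=2\sin(\theta/2)\cos(\theta/2)$ collapses the right-hand side to $\tan(\theta_1/2)/\tan(\theta_2/2)$, which is the claim.

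The trigonometry is routine; the heart of the proof is geometric, and I expect the main obstacle to be the bookkeeping. One must correctly recognize the two \Wedges\ as opposite \Wedges\ sharing a vertex angle, match each inscribed angle to the correct wedge arc (the factor of two between an inscribed angle and its arc is essential here), and locate the right similar triangles so that $r_1$ and $r_2$ can be removed. Keeping the arc-angle and vertex-angle roles consistent across the two applications of Theorem~\ref{thm:wrFormula} is where a stray factor is most likely to creep in.
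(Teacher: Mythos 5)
Your argument is correct, and it checks out line by line: the two \Wedges\ $DEB$ and $AEC$ are indeed opposite \Wedges\ sharing the vertical angle at $E$; the arcs of the two \Wedges\ measure $2\theta_1$ and $2\theta_2$ by the inscribed-angle theorem; Theorem~\ref{thm:wrFormula} applied twice and divided gives $w_1/w_2=(r_1/r_2)\cos^2(\theta_2/2)/\cos^2(\theta_1/2)$; the similarity $\triangle DEB\sim\triangle AEC$ (vertical angles at $E$, plus $\angle BDC=\angle BAC$ on arc $\arc{B}{C}$) gives $r_1/r_2=DB/AC=\sin\theta_1/\sin\theta_2$ via the extended law of sines; and the double-angle identity collapses everything to $\tan(\theta_1/2)/\tan(\theta_2/2)$. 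The paper itself offers no proof of this theorem --- it simply cites Fukagawa--Rigby and Unger, where the result is obtained from the explicit inradius formula of Theorem~\ref{thm:wedgeFormula} --- so your derivation is a genuine alternative: it stays entirely within the paper's own toolkit, using only the ratio formula of Theorem~\ref{thm:wrFormula} plus an elementary similar-triangles computation, and it has the pleasant feature that the unknown inradii $r_1$, $r_2$ and the shared factor $\cos(\alpha/2)$ cancel without any heavy trigonometric bookkeeping. (Incidentally, your identification of which arc goes with which \Wedge\ agrees with how Theorem~\ref{thm:wwFormula} is actually used later in the paper, e.g.\ in the proof of Theorem~\ref{thm:Aichi1844}, which is the consistency check that matters.)
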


\begin{figure}[h!t]
\centering
\includegraphics[width=0.25\linewidth]{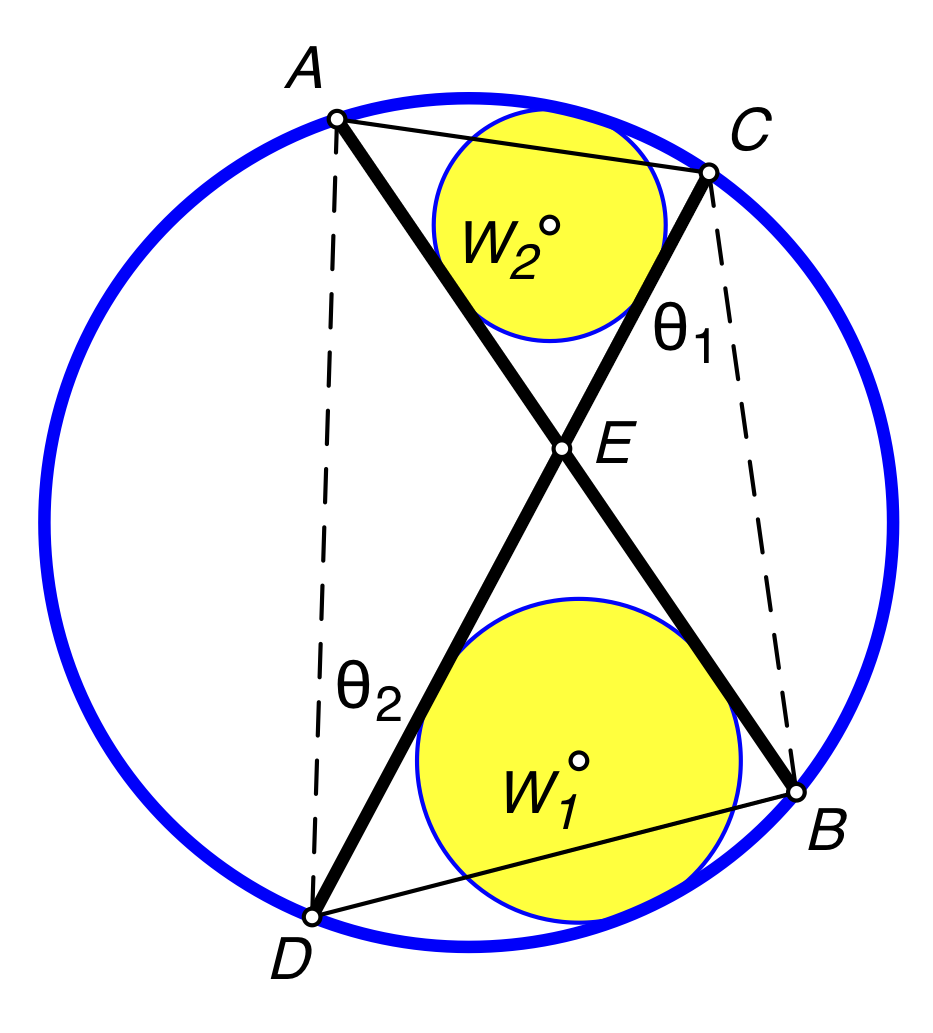}
\caption{$w_1/w_2=\tan(\theta_1/2)/\tan(\theta_2/2)$}
\label{fig:Aichi1844trig}
\end{figure}

\begin{proof}
See \cite[pp.~96--97]{Fukagawa-Rigby} or \cite[p.~26--27]{Unger30}.
\end{proof}

Since the measure of an angle inscribed in a circle is half the circular measure of the intercepted arc, we have the following result.

\begin{theorem}
\label{thm:wwFormula}
Chords $AB$ and $CD$ of a circle meet at $E$. 
Let $W_1(w_1)$ be the circle inscribed in {\Wedge} $DEB$ and let $W_2(w_2)$ be the circle inscribed in {\Wedge} $AEC$, as shown in Figure~\ref{fig:wwFormula}.
Then
$$\frac{w_1}{w_2}=\frac{\tan(\theta_1/4)}{\tan(\theta_2/4)}$$
where $m(\arc{C}{A})=\theta_1$ and $m(\arc{D}{B})=\theta_2$.
\end{theorem}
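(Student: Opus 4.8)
The plan is to read Theorem~\ref{thm:wwFormula} off of Theorem~\ref{thm:Aichi1844trig} using nothing more than the inscribed angle theorem, exactly as the preceding remark suggests. Theorem~\ref{thm:Aichi1844trig} already delivers the ratio $w_1/w_2$ in closed form as $\tan(\angle BCD/2)/\tan(\angle ADC/2)$, so there is no need to return to the inradius formulas of Section~2. The entire task is to re-express those two inscribed angles in terms of the arcs $\arc{C}{A}$ and $\arc{D}{B}$.

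First I would determine which arc each of the two angles subtends. The angle $\angle BCD$ has its vertex $C$ on the circle and its sides passing through $B$ and $D$, so it is an inscribed angle standing on the chord $BD$; by the inscribed angle theorem it equals half the measure of the arc cut off by $BD$ on the side away from $C$, and since the four points alternate around the circle (the chords meet inside at $E$), that arc is exactly the one bounding {\Wedge} $DEB$. Hence $\angle BCD=\tfrac12\,m(\arc{D}{B})$. In the same way, $\angle ADC$ is inscribed on the chord $AC$ and equals half the arc bounding {\Wedge} $AEC$, so $\angle ADC=\tfrac12\,m(\arc{C}{A})$. Substituting these two identities into Theorem~\ref{thm:Aichi1844trig} turns each half-angle into a quarter-arc and produces a ratio of $\tan\bigl(\tfrac14 m(\arc{D}{B})\bigr)$ to $\tan\bigl(\tfrac14 m(\arc{C}{A})\bigr)$; reading the arc labels $\theta_1$ and $\theta_2$ off Figure~\ref{fig:wwFormula} then gives the stated expression for $w_1/w_2$.

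I expect the only delicate point to be the bookkeeping in the middle step: one must confirm that the arc opposite $C$ in $\angle BCD$ really is the wedge arc of {\Wedge} $DEB$ and not its complementary arc, and similarly for $\angle ADC$ and {\Wedge} $AEC$, so that each inscribed angle is paired with the correct $\theta_i$ and the ratio is not accidentally inverted. This is settled by fixing the cyclic order of $A,C,B,D$ forced by the crossing of the chords at $E$ and checking it against the figure. Beyond this matching there is no further analytic content, since all of the trigonometry has already been absorbed into Theorem~\ref{thm:Aichi1844trig}.
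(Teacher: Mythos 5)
Your proposal is correct and coincides with the paper's own argument: the paper gives no separate proof of Theorem~\ref{thm:wwFormula} beyond the preceding one-line remark that an inscribed angle equals half its intercepted arc, applied to Theorem~\ref{thm:Aichi1844trig} exactly as you do. One comment on the ``delicate point'' you flag: your derivation correctly pairs each incircle with the arc of its \emph{own} {\Wedge}, giving $w_1/w_2=\tan\bigl(m(\arc{D}{B})/4\bigr)\big/\tan\bigl(m(\arc{C}{A})/4\bigr)$, which is precisely how the theorem is invoked later (in Theorems~\ref{thm:Aichi1844} and~\ref{thm:mixti-H}); the ``where'' clause of the printed statement attaches $\theta_1$ and $\theta_2$ to the opposite arcs, so the residual mismatch you noticed is a labelling slip in the statement rather than a gap in your argument.
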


\begin{figure}[h!t]
\centering
\includegraphics[width=0.3\linewidth]{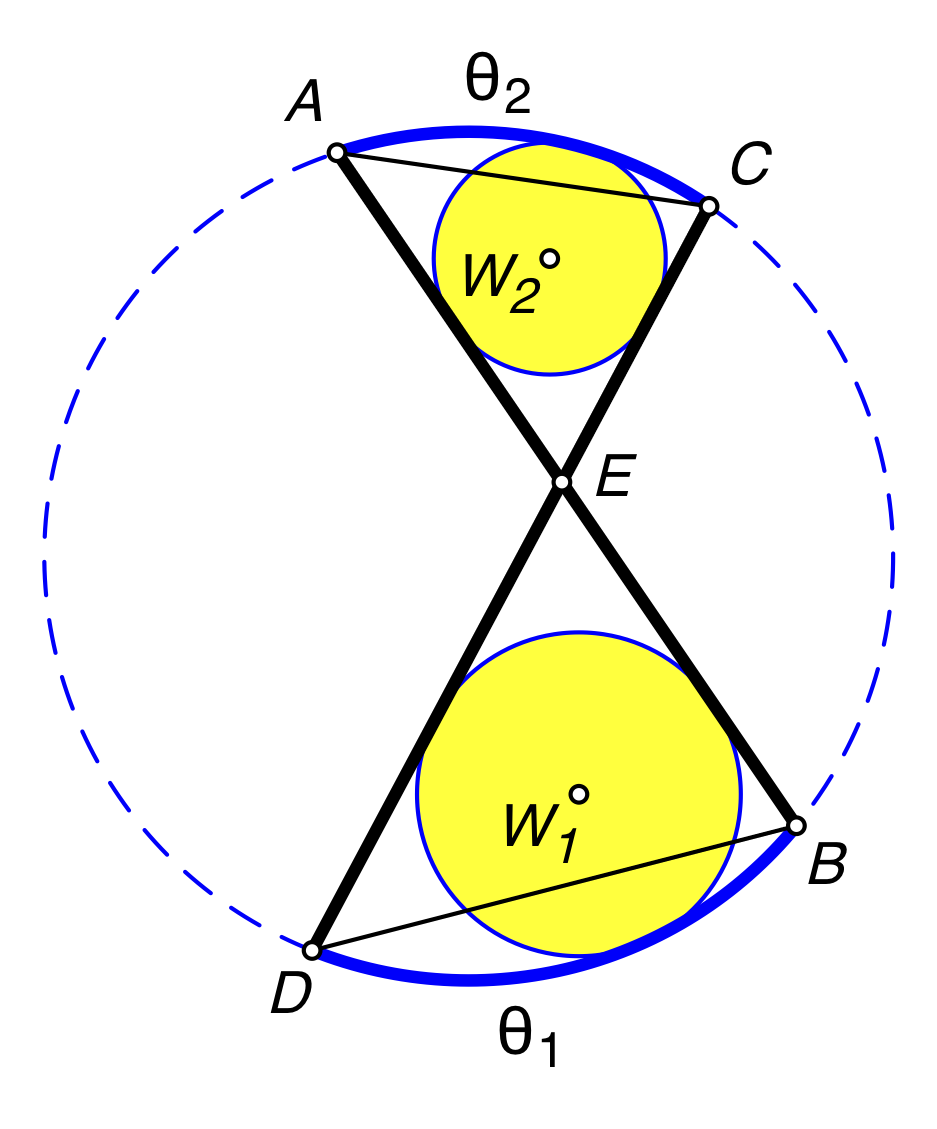}
\caption{$w_1/w_2=\tan(\theta_1/4)/\tan(\theta_2/4)$}
\label{fig:wwFormula}
\end{figure}

The reader may wonder if there is any geometric significance to the angle $\theta_1/4$. If $M$ is the midpoint
of arc $\arc{D}{B}$, then $\angle BDM=\frac{1}{2}\arc{M}{B}$ and $\arc{M}{B}=\frac{1}{2}\arc{D}{B}=\frac{1}{2}\theta_1$,
so $\angle BDM=\theta_1/4$.

There is a related result involving the incircles of triangles.

\begin{theorem}
\label{thm:rrFormula}
Chords $B_1B_2$ and $C_1C_2$ of a circle meet at $A$. 
Let $r_1$ and $r_2$ be the inradii of $\triangle B_1AC_1$ and $\triangle B_2AC_2$, respectively,
as shown in Figure~\ref{fig:rrFormula}.
Let $B_1C_1=a_1$ and let $B_2C_2=a_2$.
Then
$=r_1/r_2=a_1/a_2$.
\end{theorem}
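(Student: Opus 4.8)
The plan is to prove that the two triangles $\triangle B_1AC_1$ and $\triangle B_2AC_2$ are similar, and then to invoke the fact that the inradius is a scale-covariant quantity: in similar triangles, the inradii and any pair of corresponding sides scale by the same factor, so their ratio is the same. This makes the result follow with no computation and without any of the {\Wedge} inradius formulas used earlier in the paper.

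First I would establish the similarity $\triangle B_1AC_1\sim\triangle C_2AB_2$ by the AA criterion. The angle at $A$ is shared because $\angle B_1AC_1$ and $\angle C_2AB_2$ are vertical angles, hence equal. For a second pair of equal angles, observe that $\angle AB_1C_1=\angle B_2B_1C_1$ and $\angle AC_2B_2=\angle B_2C_2C_1$ are inscribed angles that both subtend the chord $C_1B_2$. Since the chords cross at the interior point $A$, the cyclic order of the four points around the circle is $B_1,C_1,B_2,C_2$, which places $B_1$ and $C_2$ on the same arc determined by $C_1B_2$; therefore these two inscribed angles are equal. This gives the similarity with the vertex correspondence $A\leftrightarrow A$, $B_1\leftrightarrow C_2$, $C_1\leftrightarrow B_2$. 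Under this correspondence the side $B_1C_1=a_1$, opposite the vertex $A$, matches the side $C_2B_2=B_2C_2=a_2$, also opposite the vertex $A$.

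Once the similarity is in hand the conclusion is immediate. The ratio of the inradius to the side opposite the apex at $A$ is a dimensionless quantity that depends only on the angles of the triangle, so it is a similarity invariant; since the two triangles have the same angles, $r_1/a_1=r_2/a_2$, which rearranges to $r_1/r_2=a_1/a_2$. Equivalently, all corresponding lengths in similar triangles are in the same proportion, and both the inradius and the side opposite $A$ are such lengths.

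I expect the only point requiring genuine care to be the inscribed-angle step: one must confirm that $B_1$ and $C_2$ lie on the \emph{same} arc cut off by the chord $C_1B_2$, so that the two inscribed angles are equal rather than supplementary. This is exactly what the hypothesis ``the chords meet at $A$'' secures, since an interior intersection forces the alternating cyclic order $B_1,C_1,B_2,C_2$ around the circle. Apart from checking this configuration, the argument is purely qualitative and needs no trigonometry.
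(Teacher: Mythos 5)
Your proof is correct and takes essentially the same route as the paper, which simply cites the similarity of the two triangles; you have filled in the AA argument (vertical angles at $A$ plus inscribed angles on chord $C_1B_2$) and the scale-invariance of the inradius-to-side ratio. Your vertex correspondence $B_1\leftrightarrow C_2$, $C_1\leftrightarrow B_2$ is in fact the careful way to state the similarity that the paper writes loosely as $\triangle B_1AC_1\sim\triangle B_2AC_2$.
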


\begin{figure}[h!t]
\centering
\includegraphics[width=0.3\linewidth]{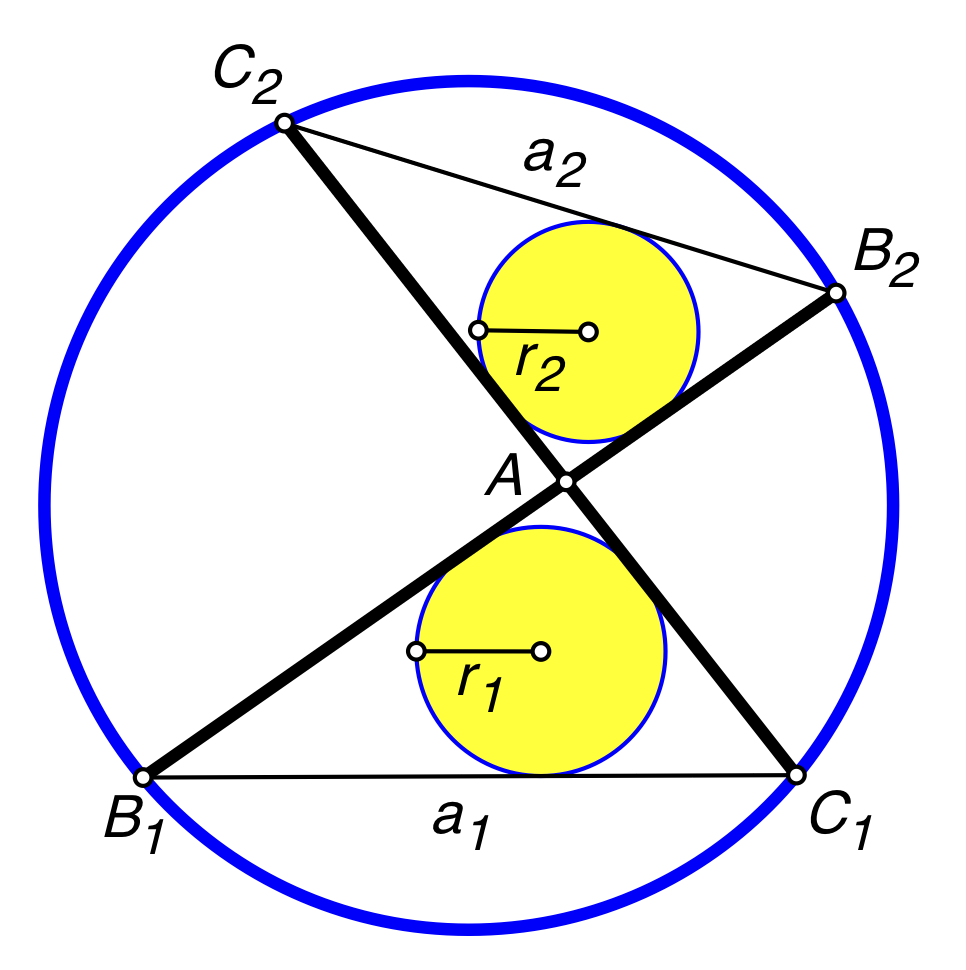}
\caption{$r_1/r_2=a_1/a_2$}
\label{fig:rrFormula}
\end{figure}

\begin{proof}
This follows from the fact that $\triangle B_1AC_1\sim\triangle B_2AC_2$.
\end{proof}

The following theorem comes from \cite[Problem~21]{Unger30} and is related to Ajima's Theorem.

\begin{theorem}
\label{thm:sagittae}
Chords $B_1B_2$ and $C_1C_2$ of a circle meet at $A$. 
Let $W_1(w_1)$ be the circle inscribed in {\Wedge} $B_1AC_1$ and let $W_2(w_2)$ be the circle inscribed in {\Wedge} $B_2AC_2$. Let $v_1$ and $v_2$ be the heights of the segments formed by chords $B_1C_1$ and $B_2C_2$ as shown in Figure~\ref{fig:sagittae}.
Then
$$\frac{w_1}{w_2}=\frac{v_1a_2}{v_2a_1}$$
where $a_1=B_1C_1$ and $a_2=B_2C_2$.
\end{theorem}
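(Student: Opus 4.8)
The plan is to reduce the whole statement to a single piece of elementary circle geometry—an identity expressing the quarter–arc tangent in terms of a chord length and a sagitta—and then to feed this into Theorem~\ref{thm:wwFormula}, which already expresses the ratio $w_1/w_2$ as a ratio of quarter–arc tangents. No appeal to the full inradius formulas of Theorem~\ref{thm:wedgeFormula} is needed.

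First I would record the required circle geometry. Let $R$ be the circumradius, and write $\psi_1=m(\arc{B_1}{C_1})$ and $\psi_2=m(\arc{B_2}{C_2})$ for the arc angles of the two {\Wedges}. A chord subtending a central angle $\psi$ has length $2R\sin(\psi/2)$, and the segment it cuts off has sagitta $R\bigl(1-\cos(\psi/2)\bigr)$. Hence, for $i=1,2$,
\[
\frac{v_i}{a_i}=\frac{R\bigl(1-\cos(\psi_i/2)\bigr)}{2R\sin(\psi_i/2)}=\frac12\tan\frac{\psi_i}{4},
\]
where the last equality uses the half–angle identities $1-\cos u=2\sin^2(u/2)$ and $\sin u=2\sin(u/2)\cos(u/2)$ with $u=\psi_i/2$. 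Equivalently, $\tan(\psi_i/4)=2v_i/a_i$. This is the crux of the argument: it converts the quarter–arc tangents produced by Theorem~\ref{thm:wwFormula} into the geometric quantities $v_i$ and $a_i$.

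Next I would apply Theorem~\ref{thm:wwFormula} to the opposite pair of {\Wedges} $B_1AC_1$ and $B_2AC_2$, which share the vertex $A$ where chords $B_1B_2$ and $C_1C_2$ cross. Their bounding arcs are $\arc{B_1}{C_1}$ and $\arc{B_2}{C_2}$, of measures $\psi_1$ and $\psi_2$, so that theorem gives
\[
\frac{w_1}{w_2}=\frac{\tan(\psi_1/4)}{\tan(\psi_2/4)}.
\]
Substituting $\tan(\psi_i/4)=2v_i/a_i$ and cancelling the factors of $2$ yields $w_1/w_2=(v_1/a_1)\big/(v_2/a_2)=v_1a_2/(v_2a_1)$, as claimed. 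The only place where care is required—and the sole spot where an error could enter—is the bookkeeping that matches the chords $B_1B_2$ and $C_1C_2$ here to the intersecting chords of Theorem~\ref{thm:wwFormula}, so that the arc bounding each {\Wedge} is correctly identified with $\psi_1$ or $\psi_2$. Once the two {\Wedges} are recognized as a vertical (opposite) pair at $A$, this correspondence is forced, and beyond the half–angle simplification above no trigonometric computation is involved.
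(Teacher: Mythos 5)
Your argument is correct, and it is worth noting that the paper itself supplies no proof of Theorem~\ref{thm:sagittae} at all --- the result is simply quoted from \cite[Problem~21]{Unger30} --- so any proof is ``different,'' but yours is a natural one that stays entirely inside the paper's own toolkit. The two {\Wedges} $B_1AC_1$ and $B_2AC_2$ are indeed a vertical pair at $A$, so Theorem~\ref{thm:wwFormula} applies in the form ``each $w_i$ goes with the quarter-tangent of its own arc,'' which is exactly how the author uses that theorem in proving Theorems~\ref{thm:mixti-H} and~\ref{thm:mixti-I}. Your key identity $\tan(\psi_i/4)=2v_i/a_i$ checks out: with $a_i=2R\sin(\psi_i/2)$ and $v_i=R\bigl(1-\cos(\psi_i/2)\bigr)$, the half-angle formulas give $v_i/a_i=\tfrac12\tan(\psi_i/4)$, and it is precisely the geometric interpretation the paper hints at in the remark following Theorem~\ref{thm:wwFormula}: $\psi_i/4$ is the angle at an endpoint of the chord in the right triangle whose legs are the sagitta $v_i$ and the half-chord $a_i/2$. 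Substituting into the ratio of quarter-tangents then yields $w_1/w_2=v_1a_2/(v_2a_1)$ at once. The one caution --- which you yourself flag --- is that the statement of Theorem~\ref{thm:wwFormula} read literally attaches $\theta_1$ to the arc $\arc{C}{A}$ of the {\Wedge} containing $W_2$; you should state explicitly that you are invoking the version in which each inradius is proportional to the tangent of one quarter of the arc of its own {\Wedge}, which is what Theorem~\ref{thm:Aichi1844trig} delivers via the inscribed-angle theorem and what the later proofs in the paper rely on.
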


\begin{figure}[h!t]
\centering
\includegraphics[width=0.34\linewidth]{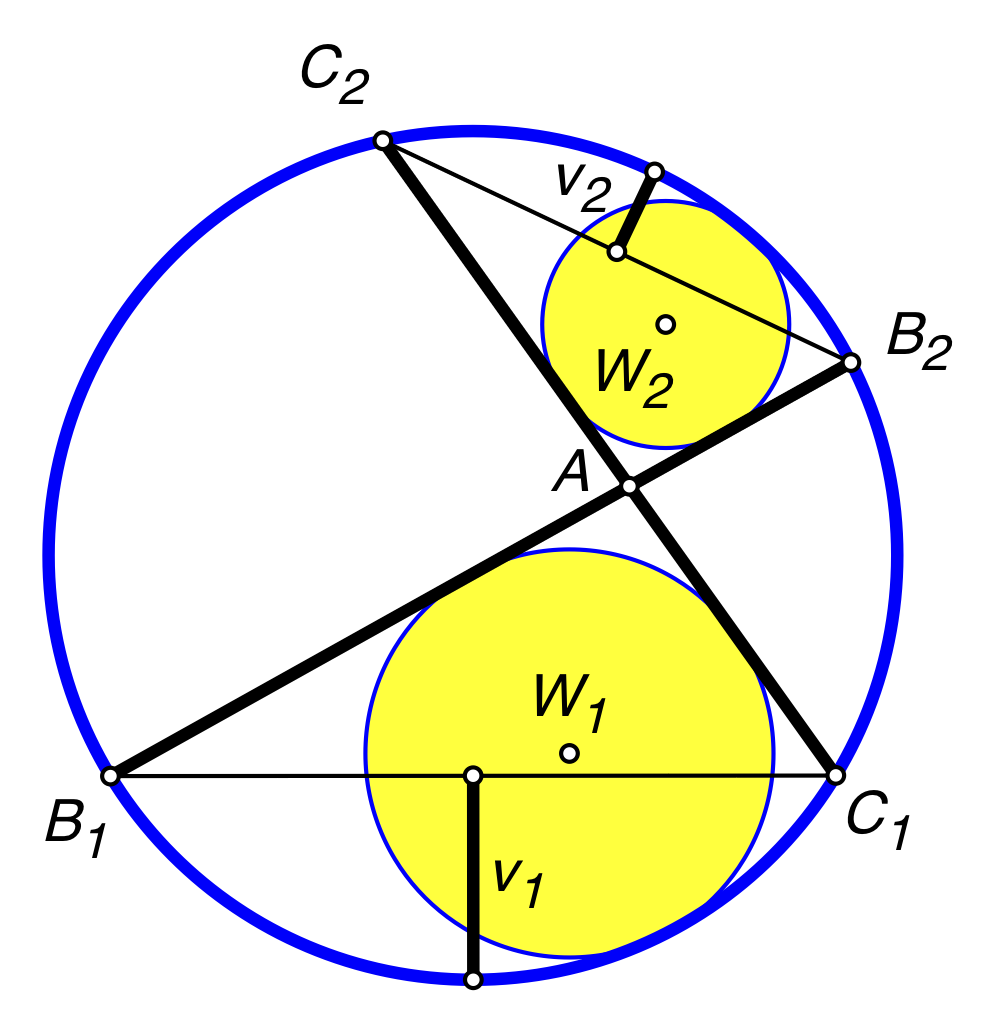}
\caption{$w_1/w_2=v_1a_2/v_2a_1$}
\label{fig:sagittae}
\end{figure}

The following result is due to Pohoatza and Ehrmann, \cite{Pohoatza}.

\begin{theorem}
\label{thm:side-Na}
Let $D$ be the point on side $BC$ of $\triangle ABC$ such that $AB+BD=AC+CD$.
A circle is circumscribed about $\triangle ABC$.
Let $W_1(w_1)$ be the circle inscribed in {\Wedge} $ADB$ and
let $W_2(w_2)$ be the circle inscribed in {\Wedge} $ADC$ (Figure~\ref{fig:side-Na}). 
Then $w_1=w_2$.
\end{theorem}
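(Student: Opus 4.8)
The plan is to apply Ajima's Theorem (Theorem~\ref{thm:AjimasTheorem}) to each of the two {\Wedges} and then exploit the hypothesis, which says precisely that $\triangle ABD$ and $\triangle ACD$ have equal perimeters. Write $a=BC$, $b=CA$, $c=AB$ and let $s$ be the semiperimeter of $\triangle ABC$. First I would combine $AB+BD=AC+CD$ with $BD+CD=a$ to get $BD=s-c$ and $CD=s-b$ (so $D$ is the Nagel-type foot, the contact point of the $A$-excircle); in particular $\triangle ABD$ and $\triangle ACD$ share the common semiperimeter $\sigma=(s+AD)/2$.

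Next, apply Ajima's Theorem to {\Wedge} $ADB$ (chord $AB$, interior vertex $D$) and to {\Wedge} $ADC$ (chord $AC$, interior vertex $D$). Letting $r_1,r_2$ be the inradii of $\triangle ABD,\triangle ACD$ and $d_1,d_2$ the heights of the segments cut off by chords $AB$ and $AC$, this gives
$$w_1=r_1+\frac{2d_1(\sigma-BD)(\sigma-AD)}{c\,\sigma},\qquad w_2=r_2+\frac{2d_2(\sigma-CD)(\sigma-AD)}{b\,\sigma}.$$
I would then rewrite each ingredient in terms of $\triangle ABC$. Since $\arc{A}{B}$ has measure $2C$, its segment height is $d_1=\tfrac{c}{2}\tan(C/2)$, and likewise $d_2=\tfrac{b}{2}\tan(B/2)$; hence $2d_1/c=\tan(C/2)$ and $2d_2/b=\tan(B/2)$. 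The half-angle identities give $\tan(C/2)=r/(s-c)$ and $\tan(B/2)=r/(s-b)$, where $r$ is the inradius of $\triangle ABC$. Finally, since $\triangle ABD$ and $\triangle ACD$ have the same altitude $h$ from $A$, their inradii satisfy $r_1-r_2=h(BD-CD)/(2\sigma)=h(b-c)/(2\sigma)$.

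Substituting everything, the combination $\tan(C/2)(\sigma-BD)-\tan(B/2)(\sigma-CD)$ collapses, after cancelling the two $\pm1$ terms, to $-\,r\sigma(b-c)/[(s-b)(s-c)]$. Therefore
$$w_1-w_2=(b-c)\left[\frac{h}{2\sigma}-\frac{(\sigma-AD)\,r}{(s-b)(s-c)}\right].$$
It remains to show the bracket is identically zero. Using $h/r=2s/a$ and $2\sigma(\sigma-AD)=(s^2-AD^2)/2$, the vanishing of the bracket is equivalent to the single relation $s^2-AD^2=4s(s-b)(s-c)/a$. I would establish this by computing $AD^2$ from Stewart's Theorem with $BD=s-c$ and $CD=s-b$; clearing the factor $a$ reduces it to a polynomial identity which the substitution $x=s-a$, $y=s-b$, $z=s-c$ renders transparent (both sides become $\tfrac{1}{a}$ times the same symmetric polynomial).

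The main obstacle is purely organizational: one must package the two Ajima expressions so that the common factor $b-c$ factors out cleanly and the residual quantity is recognized as a Stewart identity, rather than expanding the five-angle inradius formula and invoking a computer algebra system. The reward is that the proof stays elementary and self-contained—the equality $w_1=w_2$ is, in the end, nothing more than Stewart's Theorem evaluated at this particular cevian foot.
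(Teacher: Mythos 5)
Your proof is correct, and it takes a genuinely different route from the paper's. The paper extends $AD$ to meet the circumcircle at $D'$, applies Theorem~\ref{thm:4circles} (with $A$ and $D'$ interchanged) to get $1/r_1+1/w_2=1/r_2+1/w_1$ for the incircles of $\triangle BDD'$ and $\triangle CDD'$, and then quotes $r_1=r_2$ from Theorem~3.4 of \cite{Rabinowitz-other}; since Theorem~\ref{thm:4circles} itself rests on a computer-algebra simplification, the paper's argument is short but not self-contained. You instead apply Ajima's Theorem~\ref{thm:AjimasTheorem} directly to the two {\Wedges}, and every step checks out: $BD=s-c$, $CD=s-b$ give the common semiperimeter $\sigma=(s+AD)/2$; the sagitta computations $2d_1/c=\tan(C/2)=r/(s-c)$ and $2d_2/b=\tan(B/2)=r/(s-b)$ are right; writing $u=\sigma-AD$ one gets $\tan(C/2)(\sigma-BD)-\tan(B/2)(\sigma-CD)=r\,(c-b)(s-u)/[(s-b)(s-c)]$ with $s-u=\sigma$, which is exactly your collapsed bracket; and the final identity $s^2-AD^2=4s(s-b)(s-c)/a$ does follow from Stewart's Theorem --- in the variables $x=s-a$, $y=s-b$, $z=s-c$ both sides of $as^2-c^2(s-b)-b^2(s-c)+a(s-b)(s-c)=4s(s-b)(s-c)$ reduce to $4syz$. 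What your approach buys is an elementary, self-contained proof that avoids both the CAS-verified Theorem~\ref{thm:4circles} and the external reference, very much in the spirit of the paper's Open Question; what it costs is length and the need to carry Stewart's Theorem through the computation, whereas the paper's derivation is three lines given its imported machinery.
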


\begin{figure}[h!t]
\centering
\includegraphics[width=0.35\linewidth]{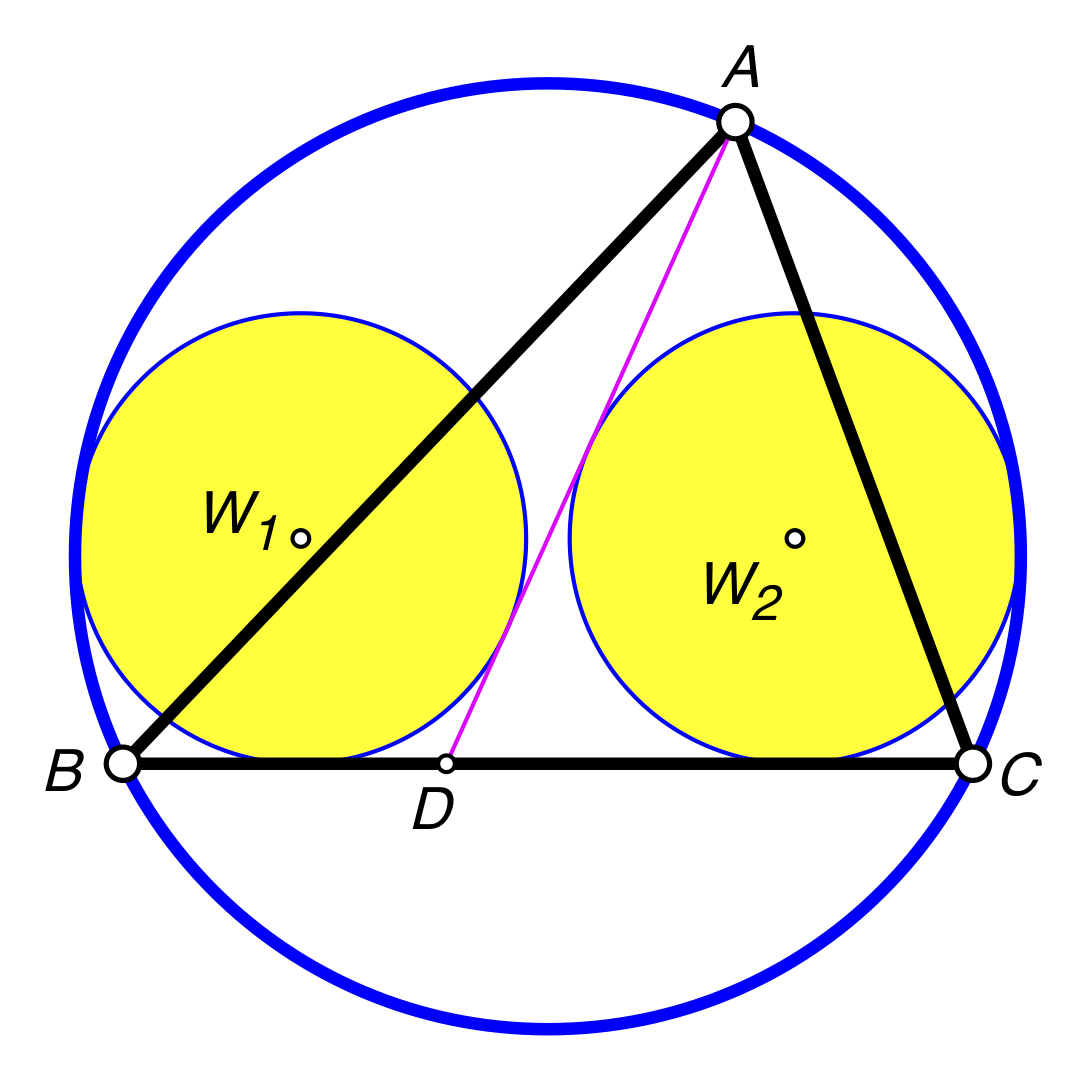}
\caption{$w_1=w_2$}
\label{fig:side-Na}
\end{figure}

\begin{proof}
Extend $AD$ to meet the circumcircle of $\triangle ABC$ at $D'$.
Let $O_1(r_1)$ be the circle inscribed in $\triangle BDD'$ and
let $O_2(r_2)$ be the circle inscribed in $\triangle CDD'$ (Figure~\ref{fig:side-Na-proof}). 
Then $1/r_1+1/w_2=1/r_2+1/w_1$ by Theorem~\ref{thm:4circles} (with points $A$ and $D'$ interchanged).
But $r_1=r_2$ by Theorem~3.4 of \cite{Rabinowitz-other}.
Therefore, $w_1=w_2$.
\end{proof}

See \cite{Ayme} for another proof.

\begin{figure}[h!t]
\centering
\includegraphics[width=0.45\linewidth]{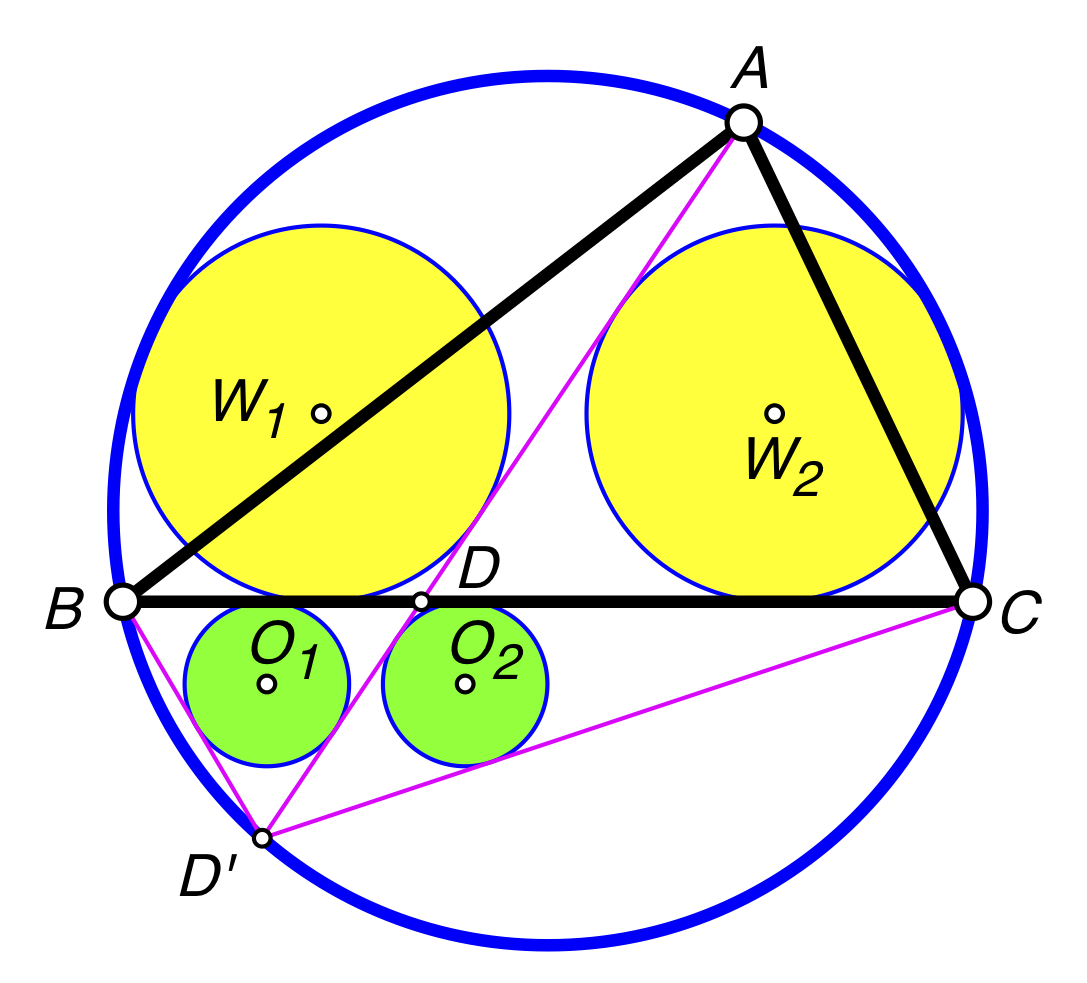}
\caption{$r_1=r_2$ and $w_1=w_2$}
\label{fig:side-Na-proof}
\end{figure}

\section{Relationships Between the Incircles of Six Skewed Sectors}

\begin{theorem}
\label{thm:mixti-H}
Let $H$ be the orthocenter of acute $\triangle ABC$. The altitudes through $H$ extended to meet the circumcircle of $\triangle ABC$ divide the interior of that circumcircle into six \Wedges,
each with vertex at $H$,
as shown in Figure~\ref{fig:mixti-H}. 
Let $W_i(w_i)$ be the circle tangent to two altitudes and internally tangent to
the circumcircle as shown.
Then $w_1w_3w_5=w_2w_4w_6$.
\end{theorem}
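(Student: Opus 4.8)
The plan is to reduce the six-circle identity to the two-wedge result already available in Theorem~\ref{thm:wwFormula}, together with a short computation of the six arc angles. First I would note that the three altitudes, extended, are three chords of the circumcircle that are concurrent at $H$; since $\triangle ABC$ is acute, $H$ lies inside the circle and the six {\Wedges} are genuine. Label them $W_1,\dots,W_6$ cyclically about $H$, so that the opposite {\Wedge} of $W_i$ is $W_{i+3}$ (indices mod $6$). Each $W_i$ is bounded by segments of two of the three altitude-chords, and extending those two segments back through $H$ produces exactly $W_{i+3}$. Thus $W_i$ and $W_{i+3}$ are the two vertical {\Wedges} determined by a single pair of the altitude-chords, which is precisely the crossed-chords configuration of Theorem~\ref{thm:wwFormula}.

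Next, writing $\phi_i$ for the arc angle of $W_i$, I would apply Theorem~\ref{thm:wwFormula} to the three opposite pairs $(W_1,W_4)$, $(W_3,W_6)$, $(W_5,W_2)$. Each application expresses $w_i/w_{i+3}$ as a quotient of $\tan(\phi/4)$-terms built from the two arcs of that pair. Multiplying the three equations, every $w_j$ occurs exactly once and the left-hand sides collapse to $w_1w_3w_5/(w_2w_4w_6)$, so the theorem reduces to the single identity
$$\tan\frac{\phi_1}{4}\,\tan\frac{\phi_3}{4}\,\tan\frac{\phi_5}{4}=\tan\frac{\phi_2}{4}\,\tan\frac{\phi_4}{4}\,\tan\frac{\phi_6}{4}.$$
Since the two products will turn out to be equal, the exact orientation of the ratio supplied by Theorem~\ref{thm:wwFormula} does not even matter.

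Finally I would compute the arcs by inscribed angles. From the right triangle at the foot of the altitude from $A$ one gets $\angle BAD'=90\degrees-B$ and $\angle CAD'=90\degrees-C$, so $D'$ cuts arc $BC$ into $\arc{B}{D'}=180\degrees-2B$ and $\arc{D'}{C}=180\degrees-2C$; the analogues hold for $E'$ on arc $CA$ and $F'$ on arc $AB$. Reading the circle through $A,F',B,D',C,E'$, the six arcs in cyclic order are
$$180\degrees-2A,\;180\degrees-2B,\;180\degrees-2B,\;180\degrees-2C,\;180\degrees-2C,\;180\degrees-2A.$$
In this list of the shape $X,Y,Y,Z,Z,X$, both the odd-placed and the even-placed entries form the same multiset $\{180\degrees-2A,\,180\degrees-2B,\,180\degrees-2C\}$; hence the two tangent products agree term by term after reindexing, and $w_1w_3w_5=w_2w_4w_6$.

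The only delicate point is this last bookkeeping — pinning down the cyclic order of $A,B,C,D',E',F'$ on the circle and checking that the wedge indices split the three arc-values evenly. Once the pattern $X,Y,Y,Z,Z,X$ is in hand the conclusion drops out with no heavy trigonometry, which is the pleasant part: unlike Theorem~\ref{thm:4circles}, the orthocenter forces the odd and even arcs to coincide as multisets, so the identity is transparent rather than computer-verified.
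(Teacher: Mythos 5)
Your proof is correct and takes essentially the same route as the paper's: both reduce the identity to Theorem~\ref{thm:wwFormula} applied to the three pairs of opposite {\Wedges} and then observe that the odd- and even-indexed arc angles form the same multiset. The paper obtains the needed arc equalities from $\angle BAH=\angle BCH$ (both complementary to $\angle ABC$) rather than from the explicit values $180\degrees-2A$, $180\degrees-2B$, $180\degrees-2C$, but this is the same underlying observation.
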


\begin{figure}[h!t]
\centering
\includegraphics[width=0.4\linewidth]{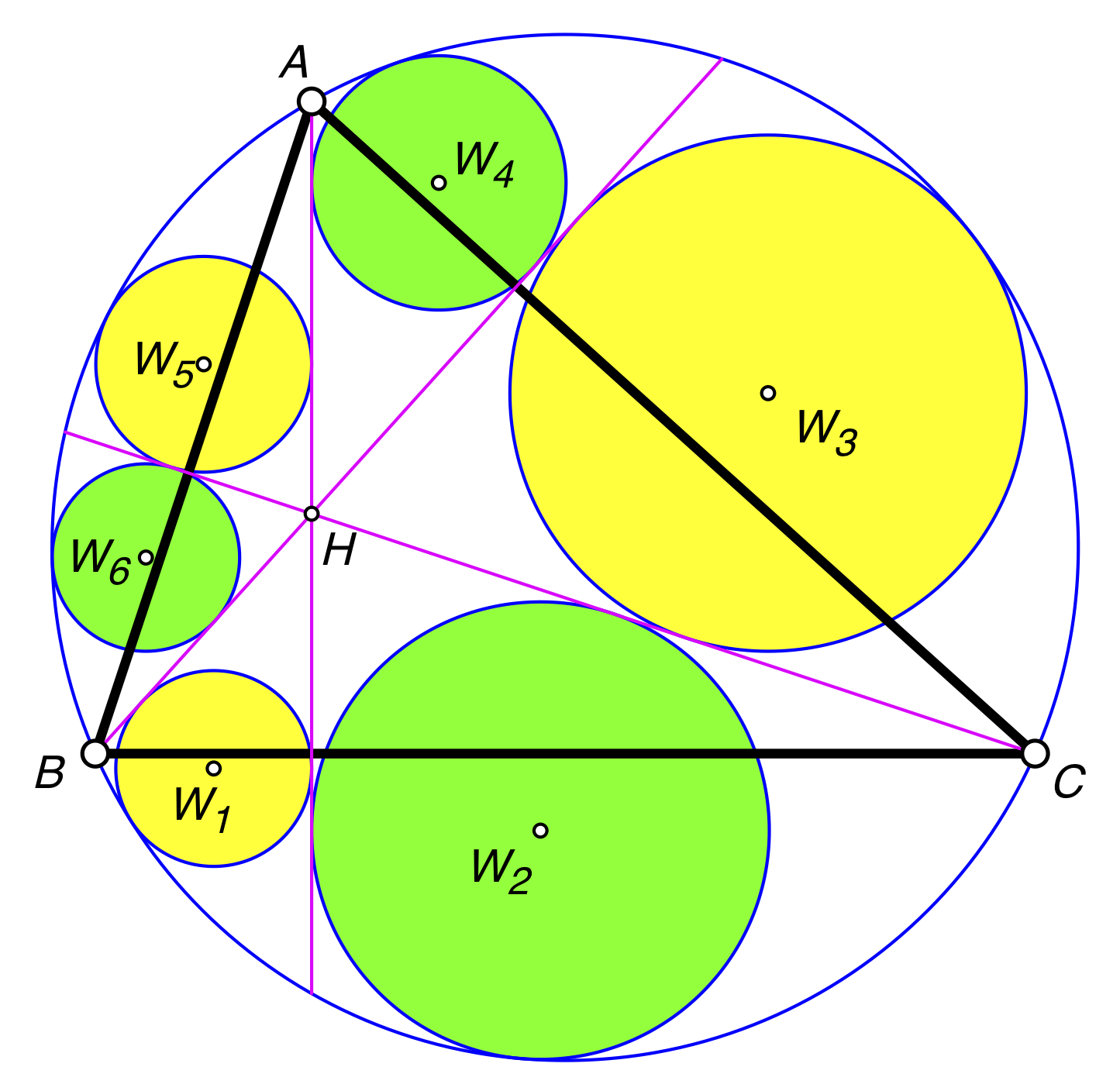}
\caption{$w_1w_3w_5=w_2w_4w_6$}
\label{fig:mixti-H}
\end{figure}

\begin{proof}
Let $\theta_i$ be the arc angle of the {\Wedge} containing circle $W_i$.
By Theorem~\ref{thm:wwFormula}, $w_1/w_4=\tan(\theta_1/4)/\tan(\theta_4/4)$.
Similarly, $w_2/w_5=\tan(\theta_2/4)/\tan(\theta_5/4)$ and $w_3/w_6=\tan(\theta_3/4)/\tan(\theta_6/4)$.
Consequently,
$$\frac{w_1w_3w_5}{w_2w_4w_6}=\frac{w_1}{w_4}\cdot\frac{w_3}{w_6}\cdot\frac{w_5}{w_2}
=\frac{\tan(\theta_1/4)}{\tan(\theta_4/4)}\cdot\frac{\tan(\theta_3/4)}{\tan(\theta_6/4)}\cdot\frac{\tan(\theta_5/4)}{\tan(\theta_2/4)}.$$
Note that $\angle BAH=\angle BCH$ since both are complementary to $\angle ABC$.
Therefore, $\theta_1=\theta_6$. Similarly, $\theta_2=\theta_3$ and $\theta_4=\theta_5$.
Hence
$$\frac{w_1w_3w_5}{w_2w_4w_6}=\frac{\tan(\theta_1/4)}{\tan(\theta_5/4)}\cdot\frac{\tan(\theta_3/4)}{\tan(\theta_1/4)}\cdot\frac{\tan(\theta_5/4)}{\tan(\theta_3/4)}=1,$$
so $w_1w_3w_5=w_2w_4w_6$.
\end{proof}

\begin{theorem}
\label{thm:mixti-I}
Let $I$ be the incenter of $\triangle ABC$. The cevians through $I$ extended to meet the circumcircle of $\triangle ABC$ divide the interior of that circumcircle into six \Wedges,
each with vertex at $I$,
as shown in Figure~\ref{fig:mixti-I}. 
Let $W_i(w_i)$ be the circle tangent to two cevians and internally tangent to
the circumcircle as shown.
Then $w_1w_3w_5=w_2w_4w_6$.
\end{theorem}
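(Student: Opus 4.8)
The plan is to mirror the proof of Theorem~\ref{thm:mixti-H} almost verbatim, since the two statements differ only in the distinguished point ($I$ in place of $H$) and hence in the geometric fact that produces the needed equalities among arc angles. First I would identify the three cevians: because $I$ is the incenter, they are the internal angle bisectors $AI$, $BI$, $CI$, and each bisector extended meets the circumcircle at the midpoint of the opposite arc. Writing $M_A, M_B, M_C$ for these three points, the six intersections of the cevians with the circle are $A, M_C, B, M_A, C, M_B$ in cyclic order, and the six \Wedges\ with vertex $I$ have the six resulting sub-arcs as their arcs.

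Next, let $\theta_i$ be the arc angle of the \Wedge\ containing $W_i$, with the labeling taken (as in the figure) so that $W_i$ and $W_{i+3}$ occupy opposite (vertical) \Wedges. Exactly as in Theorem~\ref{thm:mixti-H}, three applications of Theorem~\ref{thm:wwFormula} to the three pairs of opposite \Wedges\ express $w_1/w_4$, $w_2/w_5$, and $w_3/w_6$ as ratios of the form $\tan(\theta_i/4)/\tan(\theta_j/4)$, so that
$$\frac{w_1w_3w_5}{w_2w_4w_6}=\frac{w_1}{w_4}\cdot\frac{w_3}{w_6}\cdot\frac{w_5}{w_2}$$
becomes a product of three such tangent ratios.

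The only new ingredient is deciding which arc angles coincide, and here the bisector property does the work directly. The cevian $CM_C$ bisects $\angle C$, so $\angle ACM_C=\angle BCM_C$; since these inscribed angles subtend the arcs $\arc{A}{M_C}$ and $\arc{M_C}{B}$, those two arcs are equal, and they are precisely the arcs of the two \Wedges\ meeting along ray $IM_C$. Applying the same reasoning at $M_A$ and $M_B$ gives three equalities between adjacent arc angles (the analogue of $\theta_1=\theta_6$, $\theta_2=\theta_3$, $\theta_4=\theta_5$ in Theorem~\ref{thm:mixti-H}). Substituting them makes the product of tangent ratios telescope to $1$, whence $w_1w_3w_5=w_2w_4w_6$.

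I expect no serious obstacle: the single point requiring care is bookkeeping, namely matching the subscripts $1,\dots,6$ to the figure so that the opposite-\Wedge\ pairs and the equal adjacent pairs are named correctly. As in Theorem~\ref{thm:mixti-H}, the cancellation to $1$ is insensitive to the orientation convention in Theorem~\ref{thm:wwFormula} (whether a given ratio is read as $\tan(\theta_i/4)/\tan(\theta_j/4)$ or its reciprocal), so once the three arc equalities are established the conclusion follows regardless of these labeling choices.
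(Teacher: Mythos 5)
Your proposal is correct and follows essentially the same route as the paper: apply Theorem~\ref{thm:wwFormula} to the three pairs of opposite {\Wedges}, then use the angle-bisector property to show the two arcs adjacent to each arc-midpoint are equal, so the product of tangent ratios telescopes to $1$. The only cosmetic difference is that the paper reads the equal inscribed angles at the triangle's vertex (e.g.\ $\angle BAI=\angle CAI$) rather than naming the arc midpoints explicitly, but the underlying fact is identical.
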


\begin{figure}[h!t]
\centering
\includegraphics[width=0.45\linewidth]{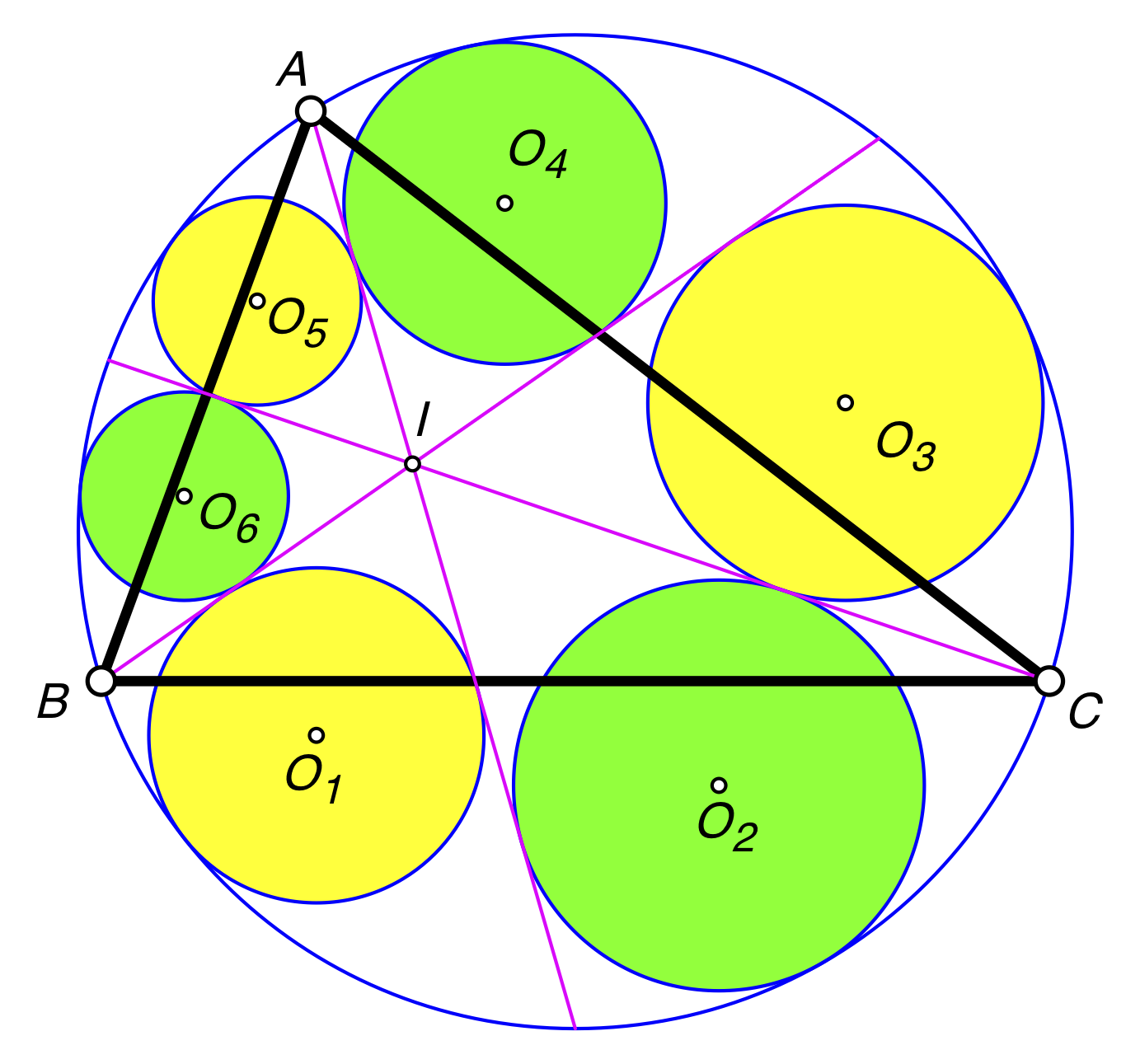}
\caption{$w_1w_3w_5=w_2w_4w_6$}
\label{fig:mixti-I}
\end{figure}

\begin{proof}
Let $\theta_i$ be the arc angle of the {\Wedge} containing circle $W_i(w_i)$.
By Theorem~\ref{thm:wwFormula}, $w_1/w_4=\tan(\theta_1/4)/\tan(\theta_4/4)$.
Similarly, $w_2/w_5=\tan(\theta_2/4)/\tan(\theta_5/4)$ and $w_3/w_6=\tan(\theta_3/4)/\tan(\theta_6/4)$.
Consequently,
$$\frac{w_1w_3w_5}{w_2w_4w_6}=\frac{w_1}{w_4}\cdot\frac{w_3}{w_6}\cdot\frac{w_5}{w_2}
=\frac{\tan(\theta_1/4)}{\tan(\theta_4/4)}\cdot\frac{\tan(\theta_3/4)}{\tan(\theta_6/4)}\cdot\frac{\tan(\theta_5/4)}{\tan(\theta_2/4)}.$$
Note that $\angle BAI=\angle CAI$ since $AI$ is an angle bisector.
Therefore, $\theta_1=\theta_2$. Similarly, $\theta_3=\theta_4$ and $\theta_5=\theta_6$.
Hence
$$\frac{w_1w_3w_5}{w_2w_4w_6}=\frac{\tan(\theta_1/4)}{\tan(\theta_3/4)}\cdot\frac{\tan(\theta_3/4)}{\tan(\theta_5/4)}\cdot\frac{\tan(\theta_5/4)}{\tan(\theta_1/4)}=1,$$
so $w_1w_3w_5=w_2w_4w_6$.
\end{proof}

\void{
The following theorem comes from \cite{Rabinowitz-in} and will be needed shortly.
\begin{theorem}
\label{thm:centroid1}
Let $M$ be the centroid of $\triangle ABC$. The medians through $M$ divide $\triangle ABC$ into six small triangles.
Circles $O_i(r_i)$ are inscribed in these six triangles as shown in Figure~\ref{fig:centroid}.
Then $$\frac{1}{r_1}+\frac{1}{r_3}+\frac{1}{r_5}=\frac{1}{r_2}+\frac{1}{r_4}+\frac{1}{r_6}.$$
\end{theorem}
\begin{figure}[h!t]
\centering
\includegraphics[width=0.5\linewidth]{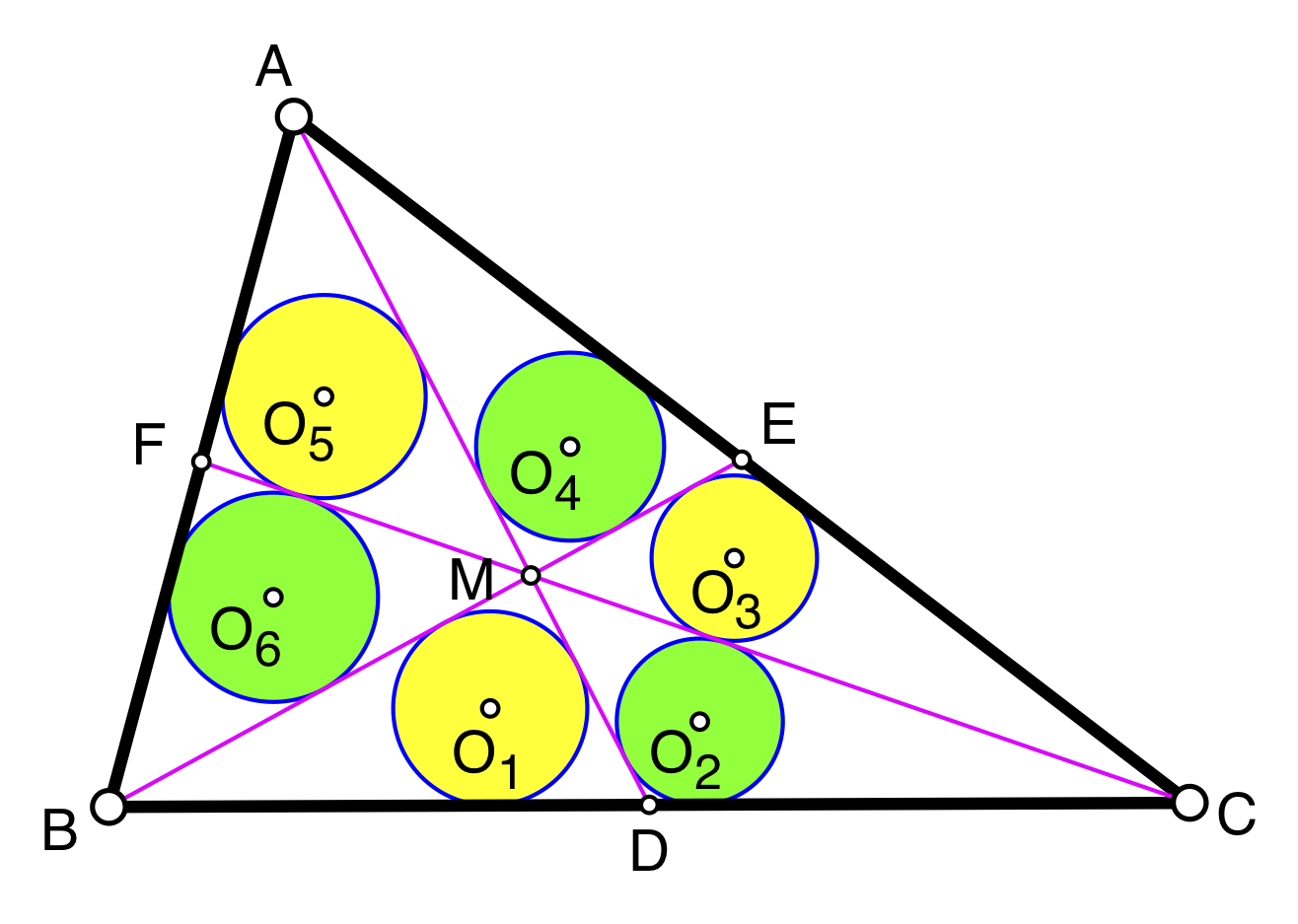}
\caption{$1/r_1+1/r_3+1/r_5=1/r_2+1/r_4+1/r_6$}
\label{fig:centroid}
\end{figure}
}

\begin{theorem}
\label{thm:segs-H}
Let $H$ be the orthocenter of acute $\triangle ABC$. The altitudes through $H$ extended to meet the circumcircle of $\triangle ABC$ divide the segments of the circumcircle bounded by the sides of the triangle into two {\Wedges} each
as shown in Figure~\ref{fig:segs-H}. 
Let $W_i(w_i)$ be the incircles of the six {\Wedges} formed,
situated as shown in Figure~\ref{fig:segs-H}.
Then $w_1w_3w_5=w_2w_4w_6$.
\end{theorem}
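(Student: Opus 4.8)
The plan is to reduce the statement to a symmetric product of inradii of ordinary right triangles, for which the two sides of the claimed identity visibly agree. The starting observation is that every one of the six wedges has vertex angle $90^\circ$: each wedge has one side lying along a side of $\triangle ABC$ and the other side lying along an altitude, and an altitude is perpendicular to the side it meets. Putting $\alpha=90^\circ$ in Theorem~\ref{thm:wrFormula2} gives $w_i/r_i=1+\tan(\theta_i/4)$, where $r_i$ is the inradius of the triangle associated with the $i$th wedge and $\theta_i$ is its arc angle. Each associated triangle is right-angled at a foot of an altitude, so its inradius is simply half the excess of the sum of the legs over the hypotenuse. Thus I would first record $w_i=r_i\bigl(1+\tan(\theta_i/4)\bigr)$ for each of the six wedges.

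Next I would compute the six arc angles. Listing the points around the circumcircle in order $A,F',B,D',C,E'$ (where $D',E',F'$ are the second intersections of the altitudes with the circle), the six arcs are cut off by these points, and a routine inscribed-angle computation gives them in terms of the angles of $\triangle ABC$. Reading them off, the odd-indexed arcs form the multiset $\{180^\circ-2A,\,180^\circ-2B,\,180^\circ-2C\}$ and the even-indexed arcs form $\{180^\circ-2B,\,180^\circ-2C,\,180^\circ-2A\}$, which is the same multiset. (These are exactly the arc equalities $\theta_1=\theta_6$, $\theta_2=\theta_3$, $\theta_4=\theta_5$ already used in the proof of Theorem~\ref{thm:mixti-H}.) Consequently the factors $1+\tan(\theta_i/4)$ appearing in $w_1w_3w_5$ are, as a multiset, identical to those appearing in $w_2w_4w_6$, so they cancel from the ratio $w_1w_3w_5/(w_2w_4w_6)$. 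The theorem therefore reduces to the purely triangular identity $r_1r_3r_5=r_2r_4r_6$.

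Finally I would compute the six right-triangle inradii explicitly. For the wedge at $F$ with arc $\arc{A}{F'}$, the associated right triangle $AFF'$ has leg $AF=2R\sin B\cos A$ (the projection of $AC$ onto $AB$), leg $FF'=2R\cos A\cos B$ (obtained from the intersecting-chord relation $FA\cdot FB=FC\cdot FF'$ together with $AF=2R\sin B\cos A$, $FB=2R\sin A\cos B$, $FC=2R\sin A\sin B$, equivalently from the fact that $F$ is the midpoint of the segment joining $H$ to the circle), and hypotenuse $AF'=2R\cos A$; hence $r=\tfrac12(AF+FF'-AF')=R\cos A\,(\sin B+\cos B-1)$. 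Writing $g(x)=\sin x+\cos x-1$ and performing the same computation at each foot yields inradii all of the form $R\cos(\cdot)\,g(\cdot)$, and in both the odd product and the even product the three cosine factors run through $\{\cos A,\cos B,\cos C\}$ while the three $g$-factors run through $\{g(A),g(B),g(C)\}$. Hence each product equals $R^3\cos A\cos B\cos C\,g(A)g(B)g(C)$, so $r_1r_3r_5=r_2r_4r_6$ and therefore $w_1w_3w_5=w_2w_4w_6$.

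I expect the main obstacle to be the bookkeeping in the third step: correctly identifying each associated triangle, computing its outer leg (the segment from a foot out to the circle) via the power of the point, and verifying that the cosine and $g$ factors really do permute into the same multiset across the two products. The arc-angle computation in the second step is the other place where care is needed, though it is identical to the one already carried out for Theorem~\ref{thm:mixti-H}. A more direct route through the crossing-chord invariant $w\tan(\theta/4)=\mathrm{const}$ (Theorem~\ref{thm:wwFormula}) is tempting, but since no two of the six wedges share a vertex it forces in auxiliary wedges whose product does not close up, so the associated-triangle reduction above is the cleaner path.
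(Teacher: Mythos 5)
Your proof is correct, but it is organized quite differently from the paper's. The paper pairs each wedge with the small triangle at the orthocenter sharing its vertex of $\triangle ABC$ (via Theorem~\ref{thm:segs-4Hcircles}, so that $w_1/r_1=w_6/r_6$, etc.), and then quotes the external identity $r_1r_3r_5=r_2r_4r_6$ for the six orthocenter triangles from Theorem~3.1 of \cite{Rabinowitz-in}. You instead exploit the fact that every wedge has vertex angle $90^\circ$, apply Theorem~\ref{thm:wrFormula2} to get $w_i=r_i\bigl(1+\tan(\theta_i/4)\bigr)$ with $r_i$ the inradius of the \emph{associated} right triangle (e.g.\ $BDD'$), cancel the tangent factors using the arc equalities already established for Theorem~\ref{thm:mixti-H}, and then verify $r_1r_3r_5=r_2r_4r_6$ by direct computation ($r=R\cos A\,(\sin B+\cos B-1)$ and its permutations, which I checked: the legs $2R\sin B\cos A$ and $2R\cos A\cos B$ and hypotenuse $2R\cos A$ are all correct, and the cosine and $g$ factors do permute into the same multiset in the two alternating products). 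Note that since the reflection of $H$ in a side lies on the circumcircle, your associated triangles $BDD'$, etc.\ are congruent to the paper's orthocenter triangles $BDH$, etc.\ (this is exactly the congruence used inside the proof of Theorem~\ref{thm:segs-4Hcircles}), so your explicit computation in effect also reproves the cited result from \cite{Rabinowitz-in}. What your route buys is self-containedness --- no appeal to the companion paper --- at the cost of more trigonometric bookkeeping; what the paper's route buys is brevity and a cleaner conceptual reduction to an already-published incircle identity. Your closing observation that Theorem~\ref{thm:wwFormula} cannot be used directly because no two of these six wedges share a vertex is also correct.
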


\begin{figure}[h!t]
\centering
\includegraphics[width=0.4\linewidth]{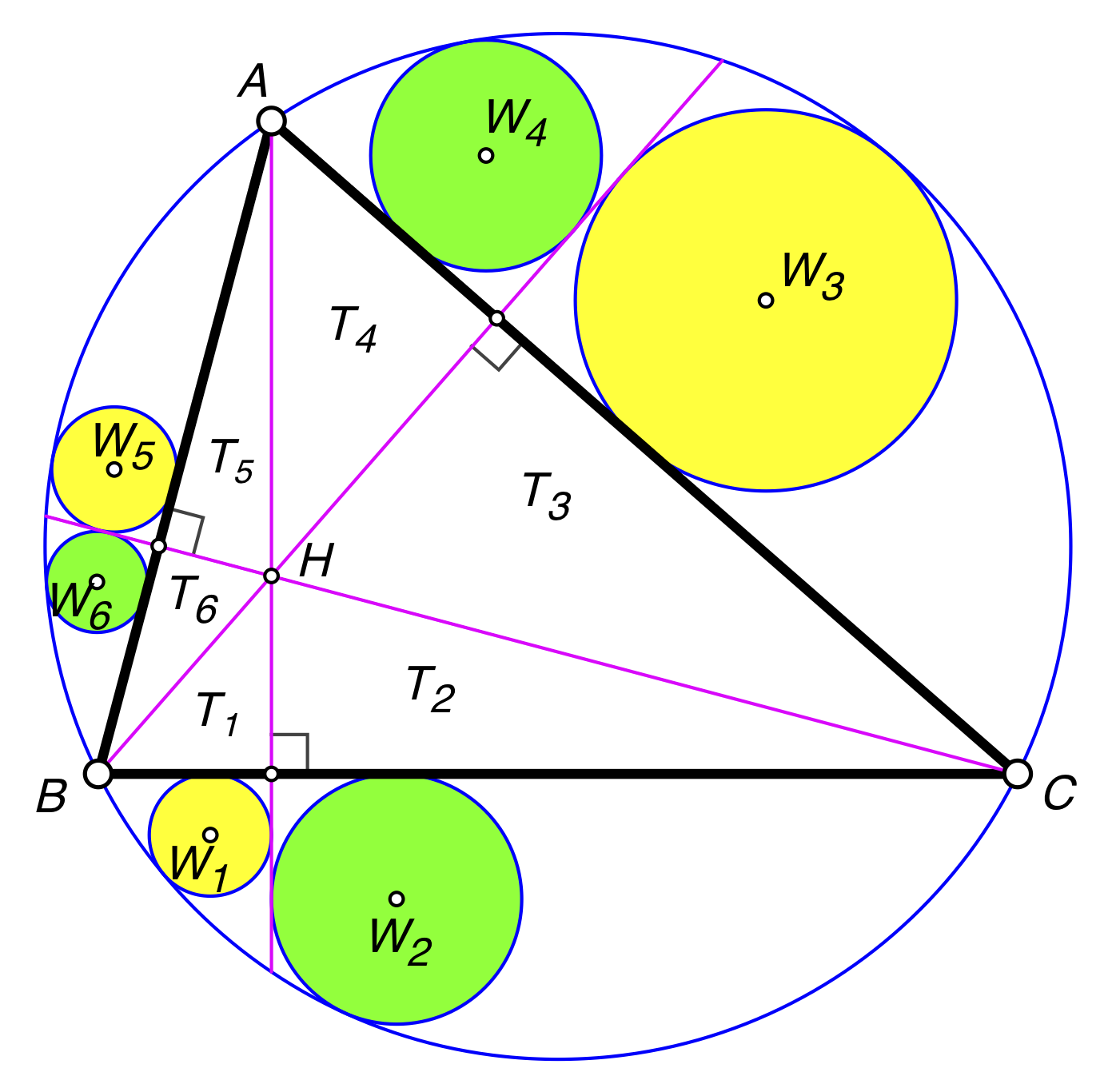}
\caption{$w_1w_3w_5=w_2w_4w_6$}
\label{fig:segs-H}
\end{figure}

\begin{proof}
The altitudes of $\triangle ABC$ divide it into six triangles named $T_1$ through $T_6$ as shown in Figure~\ref{fig:segs-H}. Let $r_i$ be the inradius of triangle $T_i$.
By Theorem~\ref{thm:segs-4Hcircles}, $w_1/r_1=w_6/r_6$. Similarly, $w_3/r_3=w_2/r_2$ and $w_5/r_5=w_4/r_4$.
Therefore,
$$\frac{w_1w_3w_5}{r_1r_3r_5}=\frac{w_6w_2w_4}{r_6r_2r_4}.$$
But $r_1r_3r_5=r_2r_4r_6$ by Theorem~3.1 of \cite{Rabinowitz-in}.
Therefore, $w_1w_3w_5=w_2w_4w_6$.
\end{proof}

\begin{theorem}
\label{thm:segs-M}
Let $M$ be the centroid of $\triangle ABC$. The medians through $M$ extended to meet the circumcircle of $\triangle ABC$ divide the segments of the circumcircle bounded by the sides of the triangle into two {\Wedges} each
as shown in Figure~\ref{fig:segs-M}. 
Let $W_i(w_i)$ be the incircles of the six {\Wedges} formed,
situated as shown in Figure~\ref{fig:segs-M}.
Then
$$\frac{1}{w_1}+\frac{1}{w_3}+\frac{1}{w_5}=\frac{1}{w_2}+\frac{1}{w_4}+\frac{1}{w_6}.$$
\end{theorem}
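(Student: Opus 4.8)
The plan is to reduce the statement to three applications of Theorem~\ref{thm:4circles}, one per median, and then to exploit the fact that a median bisects the area of the triangle so that the resulting terms telescope. First I would fix notation: let $D$, $E$, $F$ be the midpoints of $BC$, $CA$, $AB$, so the medians are $AD$, $BE$, $CF$, and let $D'$, $E'$, $F'$ be the second intersections of these medians with the circumcircle. Tracing the boundary, the six wedges in cyclic order are $BDD'$, $CDD'$, $CEE'$, $AEE'$, $AFF'$, $BFF'$, so that $\{w_1,w_3,w_5\}$ are the inradii of the first, third, and fifth of these and $\{w_2,w_4,w_6\}$ are the inradii of the others.

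Next I would apply Theorem~\ref{thm:4circles} to the median $AD$. Written as a difference, its conclusion gives $\frac{1}{w_{BDD'}}-\frac{1}{w_{CDD'}}=\frac{1}{r_{ABD}}-\frac{1}{r_{ACD}}$, where $r_{ABD}$ and $r_{ACD}$ are the inradii of the two half-triangles cut off by the median. The crucial simplification is that $D$ is the midpoint of $BC$, so $[ABD]=[ACD]=\tfrac12[ABC]$. Using the identity $1/r=s/K$, where $s$ and $K$ are the semiperimeter and area of the triangle in question, and cancelling the common area, the two semiperimeters differ only in the side from $A$ (because $BD=DC$), leaving $\frac{1}{r_{ABD}}-\frac{1}{r_{ACD}}=\frac{AB-AC}{[ABC]}=\frac{c-b}{[ABC]}$.

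Carrying out the same computation cyclically for the medians $BE$ and $CF$ yields $\frac{1}{w_{CEE'}}-\frac{1}{w_{AEE'}}=\frac{a-c}{[ABC]}$ and $\frac{1}{w_{AFF'}}-\frac{1}{w_{BFF'}}=\frac{b-a}{[ABC]}$. Adding the three equations, the right-hand sides combine to $\frac{(c-b)+(a-c)+(b-a)}{[ABC]}=0$, and regrouping the left-hand side by the odd/even labeling gives exactly $\frac{1}{w_1}+\frac{1}{w_3}+\frac{1}{w_5}=\frac{1}{w_2}+\frac{1}{w_4}+\frac{1}{w_6}$.

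I expect the only real subtleties to be bookkeeping rather than mathematics: one must orient the cyclic labeling of the six wedges consistently so that the three ``$B$-side minus $C$-side'' differences fall into the correct odd/even classes, and one must apply Theorem~\ref{thm:4circles} with the vertices relabeled appropriately for each median so that the roles of $W_1$, $W_2$, $O_1$, $O_2$ match. The genuinely useful observation---the analogue of the product identity $r_1r_3r_5=r_2r_4r_6$ used in the orthocenter case---is that equal areas turn each reciprocal-inradius difference into a bare side-length difference, after which the telescoping $(c-b)+(a-c)+(b-a)=0$ does all the work; notably, this route avoids needing any separate identity about the inradii of the six small triangles.
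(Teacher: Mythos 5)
Your proposal is correct, and its core reduction is exactly the one the paper uses: apply Theorem~\ref{thm:4circles} once per median to convert each difference $1/w_{2k-1}-1/w_{2k}$ into the corresponding difference of reciprocal inradii of the two side triangles cut off by that median. Where you diverge is in how you dispose of the resulting sum $\sum(1/r_{2k-1}-1/r_{2k})$: the paper simply cites Theorem~2.2 of \cite{Rabinowitz-other} (the centroid case of the six--side--triangle identity), whereas you prove that identity on the spot via $1/r=s/K$, the fact that a median bisects the area, and the observation that the two semiperimeters then differ by half a side length, giving $1/r_{ABD}-1/r_{ACD}=(c-b)/[ABC]$ and the telescoping sum $(c-b)+(a-c)+(b-a)=0$. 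Your computation checks out ($s_1-s_2=(c-b)/2$ divided by the common area $[ABC]/2$ does give $(c-b)/[ABC]$), so your argument is a self-contained re-derivation of the cited result rather than a genuinely different strategy; what it buys is independence from the companion paper, at the cost of a little bookkeeping that, as you note, must be matched to the cyclic labeling of Figure~\ref{fig:segs-M}.
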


\begin{figure}[h!t]
\centering
\includegraphics[width=0.3\linewidth]{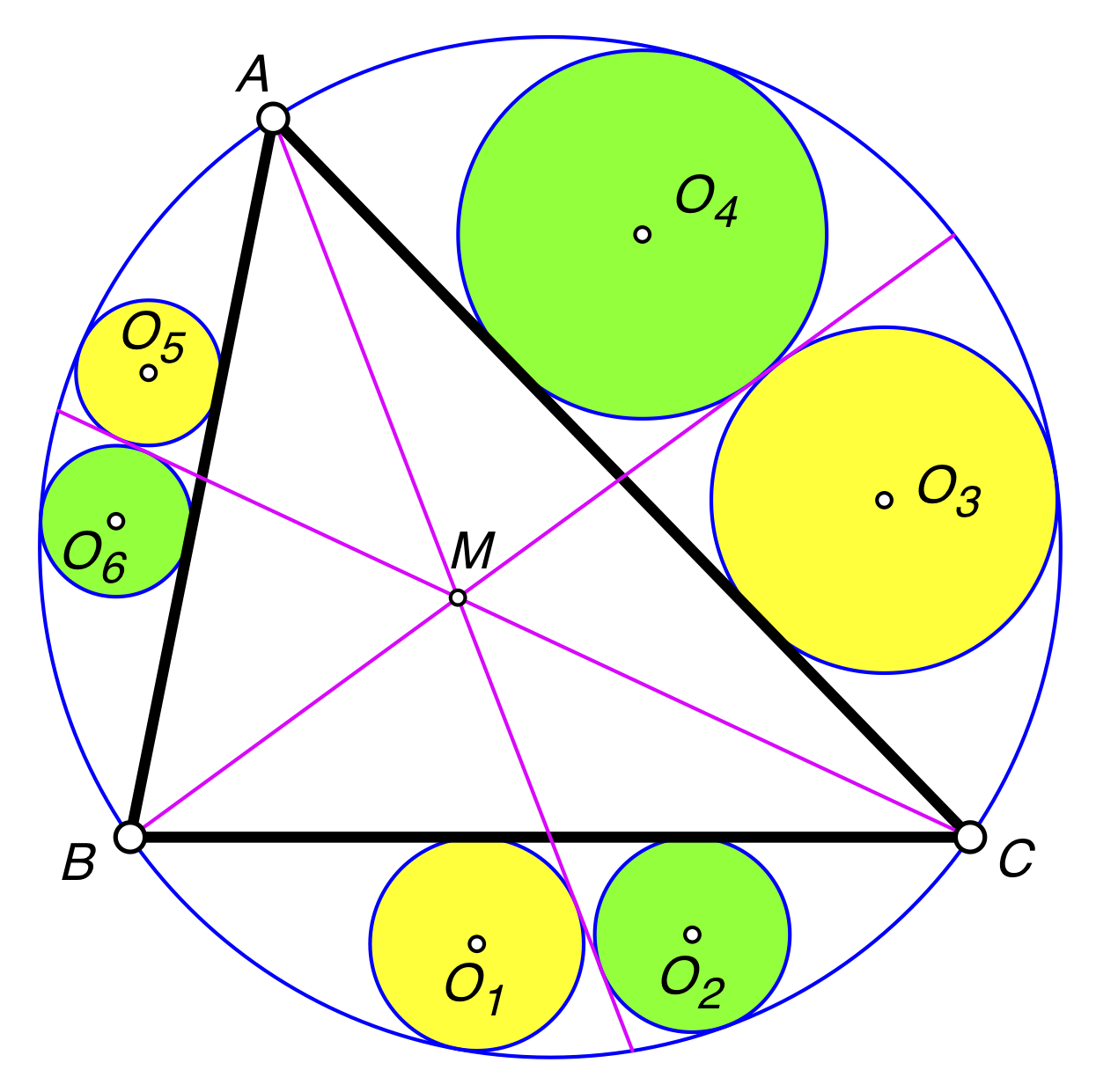}
\caption{$1/w_1+1/w_3+1/w_5=1/w_2+1/w_4+1/w_6$}
\label{fig:segs-M}
\end{figure}

\begin{proof}
A cevian through a point $P$ inside a triangle $ABC$ divides $\triangle ABC$ into two triangles, known as \textit{side triangles}.
There are six such side triangles, named $S_1$ through $S_6$ as shown in Figure~\ref{fig:sideTriangles}.

\begin{figure}[h!t]
\centering
\includegraphics[width=0.75\linewidth]{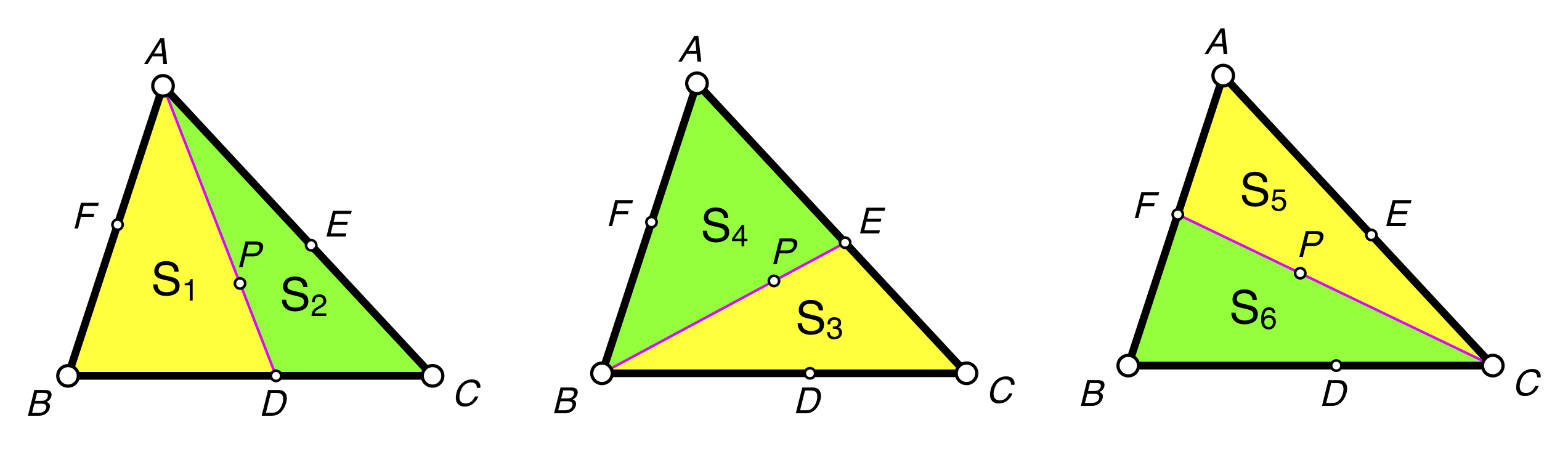}
\caption{naming of side triangles}
\label{fig:sideTriangles}
\end{figure}

Let $r_i$ be the radius of the circle inscribed in triangle $S_i$.
When $P$ is the centroid of $\triangle ABC$, Theorem~2.2 from \cite{Rabinowitz-other} states that
$$\frac{1}{r_1}+\frac{1}{r_3}+\frac{1}{r_5}=\frac{1}{r_2}+\frac{1}{r_4}+\frac{1}{r_6}.$$

By Theorem~\ref{thm:4circles},
$$
\begin{aligned}
\left(\frac{1}{w_1}-\frac{1}{w_2}\right)&+\left(\frac{1}{w_3}-\frac{1}{w_4}\right)+\left(\frac{1}{w_5}-\frac{1}{w_6}\right)\\
&=\left(\frac{1}{r_1}-\frac{1}{r_2}\right)+\left(\frac{1}{r_3}-\frac{1}{r_4}\right)+\left(\frac{1}{r_5}-\frac{1}{r_6}\right)=0,
\end{aligned}
$$
so $1/w_1+1/w_3+1/w_5=1/w_2+1/w_4+1/w_6$.
\end{proof}

\begin{theorem}
\label{thm:side-H2}
Let $H$ be the orthocenter of acute $\triangle ABC$.
The altitudes through $H$ divide the triangle into six side triangles, $S_1$ through $S_6$ as shown in
Figure~\ref{fig:sideTriangles}.
Let $W_i(w_i)$ be the incircle of the {\Wedge} associated with $S_i$.
Two of these circles are shown in Figure~\ref{fig:side-H2}.
Then $w_1w_3w_5=w_2w_4w_6$.
\end{theorem}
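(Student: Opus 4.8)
The plan is to relate each of the six inradii $w_i$ to the inradii of the six outer {\Wedges} of Theorem~\ref{thm:segs-H} and then to quote that theorem. Write the altitudes as $AD$, $BE$, $CF$ with feet $D\in BC$, $E\in CA$, $F\in AB$, and let $D'$, $E'$, $F'$ be their second intersections with the circumcircle. Each side triangle $S_i$ contains exactly one full side of $\triangle ABC$, and the {\Wedge} associated with it is the one having its vertex at the relevant altitude foot and its arc equal to the circumcircle arc cut off by that side. The six {\Wedges} are therefore $ADB$ and $ADC$ (vertex $D$), $BEC$ and $BEA$ (vertex $E$), and $CFA$ and $CFB$ (vertex $F$); their arcs are $\arc{A}{B}$, $\arc{C}{A}$, $\arc{B}{C}$, $\arc{A}{B}$, $\arc{C}{A}$, $\arc{B}{C}$ in that order. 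These split into the two alternating classes $\{ADB,BEC,CFA\}$ and $\{ADC,BEA,CFB\}$, one side triangle per altitude in each class, and I take $\{S_1,S_3,S_5\}$ to be the first of these (the product relation is symmetric in the two classes, so the choice is immaterial).

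First I would pair each inner {\Wedge} with its opposite {\Wedge}. At the foot $D$ the two chords through $D$ are the altitude line $AD'$ and the side $BC$, so the opposite of {\Wedge} $ADB$ is {\Wedge} $CDD'$ and the opposite of {\Wedge} $ADC$ is {\Wedge} $BDD'$; the analogous statements hold at $E$ and $F$. In every case the opposite {\Wedge} is one of the six outer {\Wedges} that appear in Theorem~\ref{thm:segs-H}. Applying the opposite-{\Wedge} relation of Theorem~\ref{thm:wwFormula}, in the form $w\propto\tan(\tfrac14 m(\text{own arc}))$ used already in the proofs of Theorems~\ref{thm:Aichi1844} and \ref{thm:mixti-H}, lets me write each $w_i$ as the inradius of its opposite outer {\Wedge} times $\tan(\tfrac14 m(\text{its arc}))/\tan(\tfrac14 m(\text{the opposite arc}))$.

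Next I would form the quotient $w_1w_3w_5/(w_2w_4w_6)$. The three own arcs in the numerator are $\arc{A}{B}$, $\arc{B}{C}$, $\arc{C}{A}$, and exactly these three occur in the denominator, so all the numerator tangent factors cancel. What survives is the ratio of a product of three outer inradii to the product of the other three, multiplied by a ratio of tangents of the six sub-arcs $\arc{A}{F'}$, $\arc{F'}{B}$, $\arc{B}{D'}$, $\arc{D'}{C}$, $\arc{C}{E'}$, $\arc{E'}{A}$. The opposite {\Wedges} of $ADB$, $BEC$, $CFA$ are $CDD'$, $AEE'$, $BFF'$, which constitute one alternating class of Theorem~\ref{thm:segs-H}, while the opposite {\Wedges} of $ADC$, $BEA$, $CFB$ are $BDD'$, $CEE'$, $AFF'$, the other class. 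Hence Theorem~\ref{thm:segs-H} forces the outer-inradius ratio to equal $1$.

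Finally, the leftover ratio of sub-arc tangents must be shown to equal $1$, and this is the only genuinely orthocenter-specific step. The required input is that the two sub-arcs flanking each vertex are equal, $m(\arc{A}{F'})=m(\arc{E'}{A})$, $m(\arc{F'}{B})=m(\arc{B}{D'})$, $m(\arc{D'}{C})=m(\arc{C}{E'})$, which is immediate from the complementary-angle argument already used in Theorem~\ref{thm:mixti-H}: for instance $\angle ACF'=\angle ABE'=90\degrees-\angle A$, so these inscribed angles subtend equal arcs at $A$. With these three equalities the remaining tangent factors pair off and cancel, yielding $w_1w_3w_5=w_2w_4w_6$. I expect the main obstacle to be purely organizational, namely keeping straight the correspondence between each inner {\Wedge}, its opposite outer {\Wedge}, and the correct alternating class of Theorem~\ref{thm:segs-H}, so that the arc cancellations and the appeal to that theorem align; the underlying geometry (the arc symmetry) is elementary.
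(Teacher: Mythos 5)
Your proof is correct, and it reaches the conclusion by a genuinely different route than the paper does. The paper disposes of this theorem in one line: it invokes Corollary~\ref{cor:segs-4H}, which converts ratios of the segment-wedge inradii of Theorem~\ref{thm:segs-H} into ratios of the incircles of the side \emph{triangles}, and then (implicitly) transfers the resulting product identity from the side triangles to their associated {\Wedges}. You instead bypass the side-triangle incircles entirely: you observe that each of the six {\Wedges} in question, having its vertex at an altitude foot, has as its \emph{opposite} {\Wedge} exactly one of the six segment {\Wedges} of Theorem~\ref{thm:segs-H}, apply the opposite-{\Wedge} proportionality $w\propto\tan(\tfrac14 m(\text{own arc}))$ of Theorem~\ref{thm:wwFormula}, and then kill the leftover tangent factors with the arc equalities $m(\arc{F'}{B})=m(\arc{B}{D'})$, etc., coming from the orthocenter. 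Both routes ultimately rest on Theorem~\ref{thm:segs-H}; yours is longer on bookkeeping but is fully explicit about the step the paper leaves silent (how a statement about {\Wedge} incircles follows from a corollary phrased in terms of triangle incircles), and it isolates cleanly the one orthocenter-specific ingredient, namely the equality of the sub-arcs flanking each vertex. Two small remarks. First, your identification of the two alternating classes of opposite {\Wedges} with the two alternating classes $\{W_1,W_3,W_5\}$ and $\{W_2,W_4,W_6\}$ of Theorem~\ref{thm:segs-H} is the crux of the cancellation and is consistent with how that theorem's proof pairs $BDD'$ with $BFF'$ at the vertex $B$; you were right to flag this correspondence as the place where an indexing slip would be fatal. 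Second, your reading of ``the {\Wedge} associated with $S_i$'' as the {\Wedge} with vertex at the altitude foot (so that its associated triangle, in the paper's sense, is $S_i$ itself) is the reading under which the identity holds and under which Theorem~\ref{thm:wwFormula} is applicable; the alternative reading suggested by the proof of Theorem~\ref{thm:inscribed-O} (a {\Wedge} with vertex at a vertex of $\triangle ABC$ and arc such as $\arc{B}{D'}$) would put the vertex on the circumcircle, where there is no opposite {\Wedge}, and would in fact not yield the stated product identity for a scalene triangle, so your choice is the correct one.
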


\begin{figure}[h!t]
\centering
\includegraphics[width=0.4\linewidth]{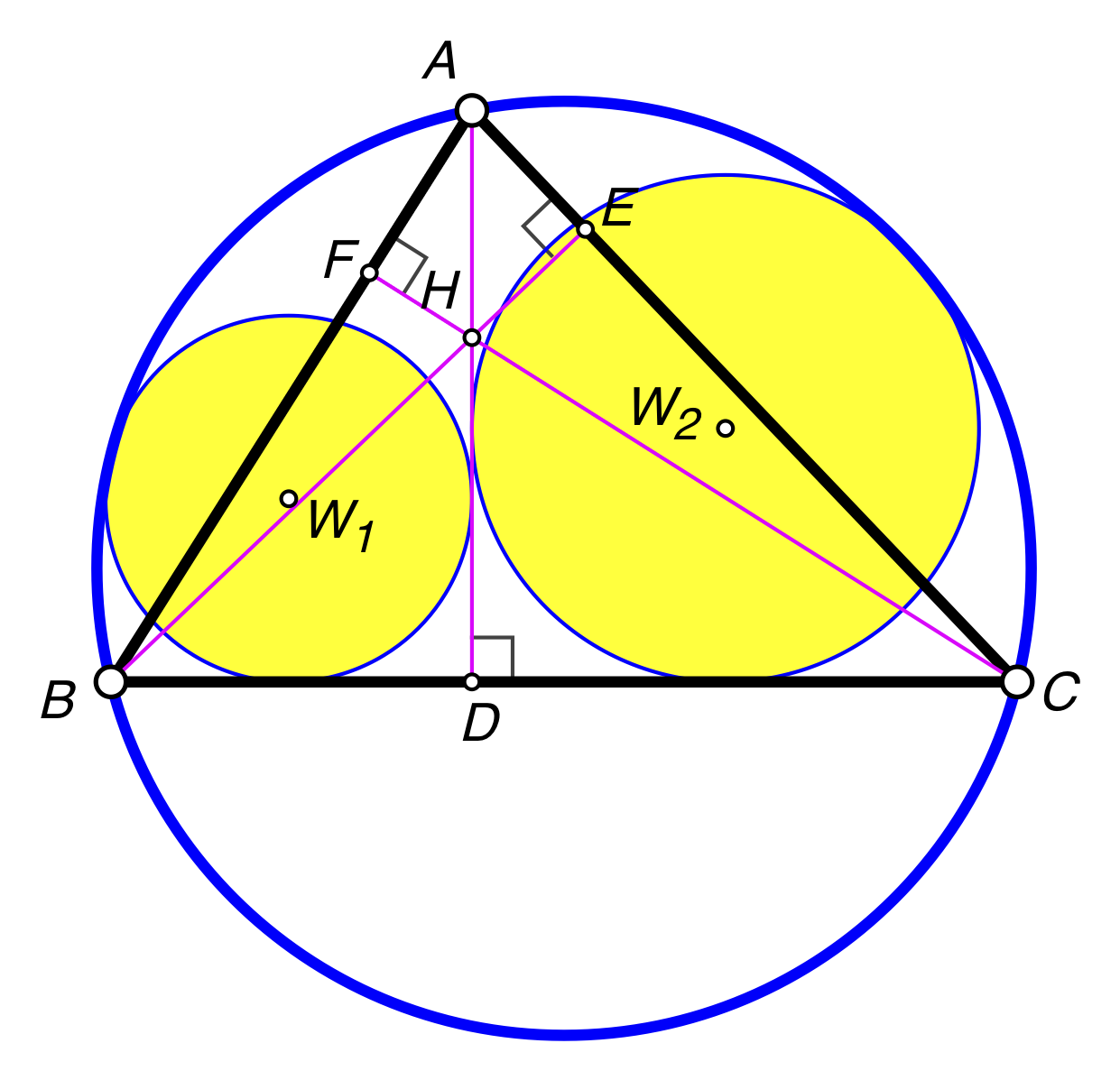}
\caption{$w_1w_3w_5=w_2w_4w_6$}
\label{fig:side-H2}
\end{figure}

\begin{proof}
This follows from Theorem~\ref{thm:segs-H} by applying Corollary~\ref{cor:segs-4H}.
\end{proof}

\begin{theorem}
\label{thm:mixti-O}
Let $O$ be the circumcenter of $\triangle ABC$. The cevians through $O$ extended to meet the circumcircle of $\triangle ABC$ divide the interior of that circumcircle into six \Wedges,
each having vertex at $O$,
as shown in Figure~\ref{fig:mixti-O}. 
Let $W_i(w_i)$ be the circle tangent to two cevians and internally tangent to
the circumcircle as shown.
Then $$w_1=w_4,\quad w_2=w_5,\quad w_3=w_6.$$
\end{theorem}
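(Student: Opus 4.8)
The plan is to exploit the one feature that makes this configuration special: because the cevians pass through the \emph{circumcenter} $O$, they are far from arbitrary. Each cevian runs from a vertex through $O$, and a chord that passes through the center of a circle is a diameter. Hence, when extended to the circumcircle, the three cevians become the three diameters $AA'$, $BB'$, $CC'$. The first step is to record this reduction explicitly: since the two sides of every {\Wedge} at $O$ now lie along diameters, they are radii of the circumcircle, so each of the six {\Wedges} is in fact an ordinary circular sector, with apex at the center $O$ and both straight sides of length $R$. Each $W_i$ is then simply the incircle of such a sector.

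With this in hand, I would invoke the symmetry of the picture rather than any formula. The half-turn (point reflection) about $O$ fixes the circumcircle and sends each diameter to itself, so it maps the entire arrangement of three diameters to itself. It therefore permutes the six sectors, carrying each sector onto the one vertically opposite it; in the labelling of Figure~\ref{fig:mixti-O} this is the map $i\mapsto i+3$. An isometry carries the (unique) incircle of a sector onto a circle tangent to the two sides and the arc of the image sector, which by uniqueness is precisely the incircle of that image sector. Since isometries preserve radii, this yields $w_1=w_4$, $w_2=w_5$, and $w_3=w_6$ at once.

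The whole content is the opening observation that cevians through the circumcenter extend to diameters, collapsing the {\Wedges} into honest sectors; after that the result is forced by the half-turn symmetry, and there is essentially no computation, in contrast with the other theorems of this section. So I would not expect a genuine obstacle, only the need to state the diameter reduction cleanly (and to confirm that the pairing $i\leftrightarrow i+3$ of the labels really is the vertically-opposite pairing displayed in the figure).

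For readers who prefer an argument entirely in the style of the paper, I would also note the alternative of bypassing the symmetry and applying Theorem~\ref{thm:wwFormula} to each opposite pair. The two diameters bounding $W_1$ and $W_4$ are two chords meeting at $O$, and the two arcs they intercept are antipodal, hence of equal measure; so Theorem~\ref{thm:wwFormula} gives $w_1/w_4=\tan(\theta/4)/\tan(\theta/4)=1$, and the same reasoning applied to the diameters bounding the other two pairs gives $w_2=w_5$ and $w_3=w_6$.
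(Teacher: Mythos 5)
Your proposal is correct and, in its main line, matches the paper's proof: the paper likewise establishes that the vertically opposite {\Wedges} are congruent (equal vertical angles at $O$, with all four sides being radii of the circumcircle), which is exactly the content of your half-turn about $O$. Your fallback argument via Theorem~\ref{thm:wwFormula} (antipodal arcs of equal measure) is also valid --- it is the method the paper uses for the neighboring Theorems~\ref{thm:mixti-H} and~\ref{thm:mixti-I} --- but it is not needed here.
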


\begin{figure}[h!t]
\centering
\includegraphics[width=0.4\linewidth]{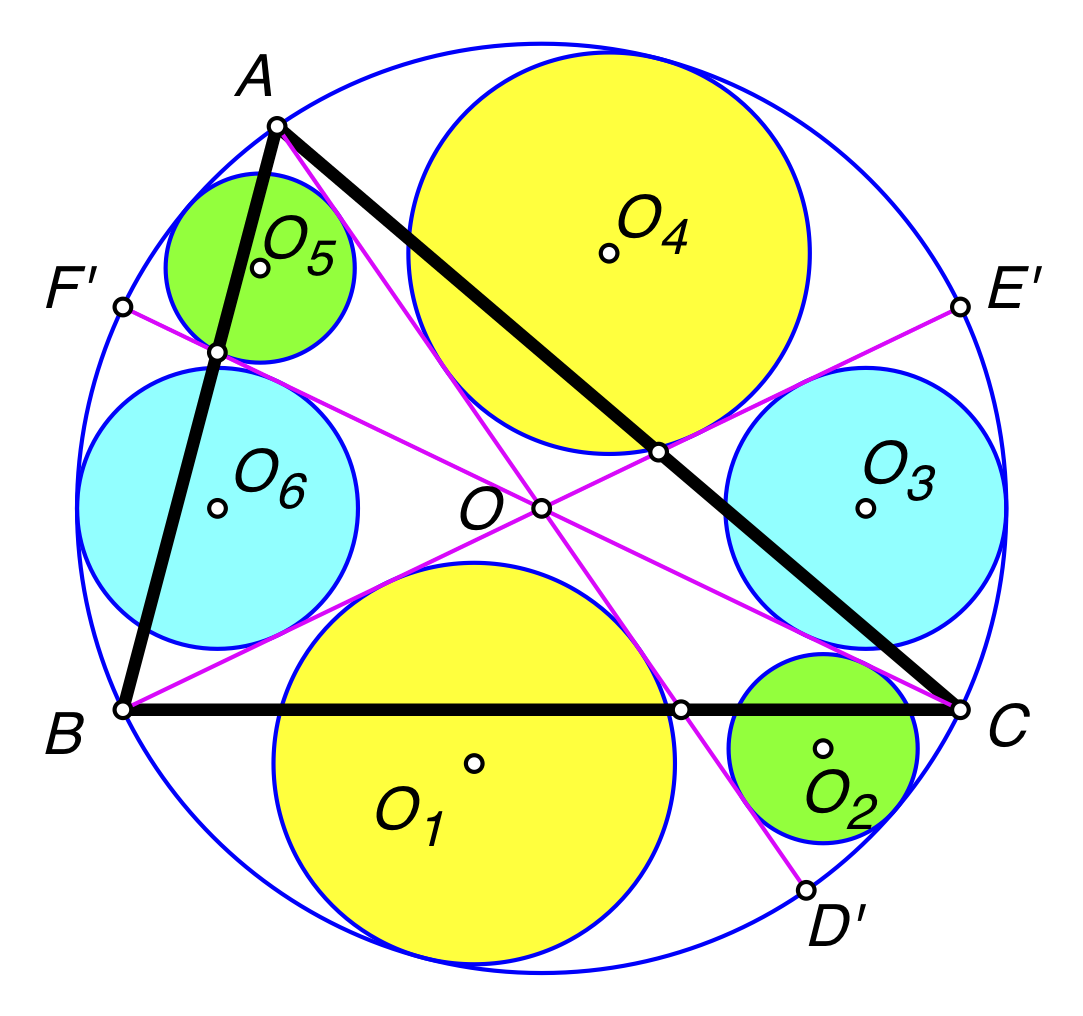}
\caption{$w_1=w_4$, $w_2=w_5$, $w_3=w_6$}
\label{fig:mixti-O}
\end{figure}

\begin{proof}
It suffices to show that $w_1=w_4$.
Note that $\angle BOD'=\angle AOE'$ because they are vertical angles.
Also, $OB=OD'=OE'=OA$ because they are all radii of circle $O$.
Therefore, {\Wedges} $BOD'$ and $AOE'$ are congruent and thus their incircles are also congruent.
\end{proof}

\begin{theorem}
\label{thm:inscribed-O}
Let $O$ be the circumcenter of $\triangle ABC$. The cevians through $O$ are extended to meet the circumcircle of $\triangle ABC$ at points $D'$, $E'$, and $F'$ as shown in Figure~\ref{fig:inscribed-O}.
The cevians divide $\triangle ABC$ into six side triangles named $S_1$ through $S_6$ as shown
in Figure~\ref{fig:sideTriangles}.
Six circles, $W_i(w_i)$, are inscribed in the {\Wedges} associated with these side triangles. Two of these circles
are shown in Figure~\ref{fig:inscribed-O}. 
Then $$w_1=w_4,\quad w_2=w_5,\quad w_3=w_6.$$
\end{theorem}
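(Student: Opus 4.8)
The plan is to exploit the one feature that singles out the circumcenter: a cevian through $O$, once extended, passes through the center of the circumcircle, so it is a \emph{diameter}. Thus $D'$, $E'$, $F'$ are the antipodes of $A$, $B$, $C$, and $AD'$, $BE'$, $CF'$ are diameters. First I would cash this in at the level of arcs. Writing $m(\arc{B}{C})=2A$, $m(\arc{C}{A})=2B$, $m(\arc{A}{B})=2C$ and using that $D'$ is antipodal to $A$ (so $D'$ splits $\arc{B}{C}$ into the two arcs $\arc{B}{D'}$ and $\arc{D'}{C}$), a one-line arc computation shows that the three pairs of wedges the theorem declares equal stand on arcs of equal measure — the three measures being $A+B-C$, $A+C-B$, and $B+C-A$. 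This is exactly where the diameter hypothesis does its work: for a general interior point the six arcs are pairwise distinct, whereas the antipodal relations force them to coincide in pairs.

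Equal arcs by themselves are not enough, because the two wedges in a pair sit at different vertices and have different vertex angles, so I would next bring in the inradius formula. Using Theorem~\ref{thm:wrFormula2} (or Theorem~\ref{thm:wwFormula}) I would write each $w_i$ as the arc factor $\tan(\theta_i/4)$ times the data of the vertex, and then evaluate those vertex angles using the circumcenter's defining angle relations, namely $\angle OAB=\angle OBA=90\degrees-C$ together with its cyclic analogues (each following from the isosceles triangle $OAB$ with $OA=OB=R$ and apex $2C$). Feeding these values and the equal-arc relations into the ratio $w_1/w_4$ (and likewise $w_2/w_5$, $w_3/w_6$) should collapse the expression, in the same spirit as the identity ``$S=0$'' in the proof of Theorem~\ref{thm:4circles}.

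The main obstacle is precisely this final collapse: since the paired vertex angles genuinely differ, the statement reduces to checking that the remaining vertex-dependent factors compensate exactly, and I expect this to be a trigonometric simplification (verifiable, if need be, with a computer algebra system) rather than a one-line observation. Because the conclusion is identical to that of Theorem~\ref{thm:mixti-O}, I would in parallel pursue a cleaner synthetic route: realize each equal pair as images of one another, or transfer the statement to the already-proved Theorem~\ref{thm:mixti-O} by means of a bridging lemma in the spirit of Corollary~\ref{cor:segs-4H} — exactly the device by which Theorem~\ref{thm:side-H2} is deduced from Theorem~\ref{thm:segs-H}. Identifying the precise correspondence between the side-triangle wedges here and the sector-at-$O$ wedges of Theorem~\ref{thm:mixti-O}, so that the half-turn about $O$ (which interchanges $A\leftrightarrow D'$, $B\leftrightarrow E'$, $C\leftrightarrow F'$ and fixes the circumcircle) carries one pair onto the other, is the delicate step that would turn this into a congruence argument with no computation.
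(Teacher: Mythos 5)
You have the right raw ingredients---the cevians through $O$ are diameters, the paired arcs have equal measures $A+B-C$, $B+C-A$, $C+A-B$, and $\angle OAB=\angle OBA=90\degrees-C$---but you stop short of the observation that finishes the proof in one line, and both routes you actually propose are flawed. The two {\Wedges} paired in $w_1=w_4$ are $BAD'$ (vertex $A$, sides $AB$ and $AD'$) and $ABE'$ (vertex $B$, sides $BA$ and $BE'$). They share the side $AB$; their vertex angles are $\angle BAD'=\angle OAB$ and $\angle ABE'=\angle OBA$, which are \emph{equal}, being the base angles of the isosceles triangle $OAB$. So the premise on which you hang the need for a trigonometric collapse---that ``the paired vertex angles genuinely differ''---is false. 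Since the remaining sides $AD'$ and $BE'$ are both diameters and hence equal, the two {\Wedges} are congruent outright (mirror images in the perpendicular bisector of $AB$), and their incircles are equal. This congruence is exactly the paper's proof; no inradius formula is needed.

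The synthetic fallback you sketch also fails as stated: the half-turn about $O$ sends $A\mapsto D'$ and $B\mapsto E'$, so it carries {\Wedge} $BAD'$ to a {\Wedge} with vertex at $D'$ and sides $D'A$, $D'E'$, which is not one of the six {\Wedges} of the theorem (its vertex is not a vertex of $\triangle ABC$). The isometry you want is the reflection in the diameter perpendicular to $AB$: it fixes the circumcircle, swaps $A$ and $B$, and therefore swaps the diameters $AD'$ and $BE'$. Your trigonometric route would no doubt verify the identity if pushed through, but it is attacking an obstacle that is not there, and as written it leaves the decisive step unproved.
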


\begin{figure}[h!t]
\centering
\includegraphics[width=0.3\linewidth]{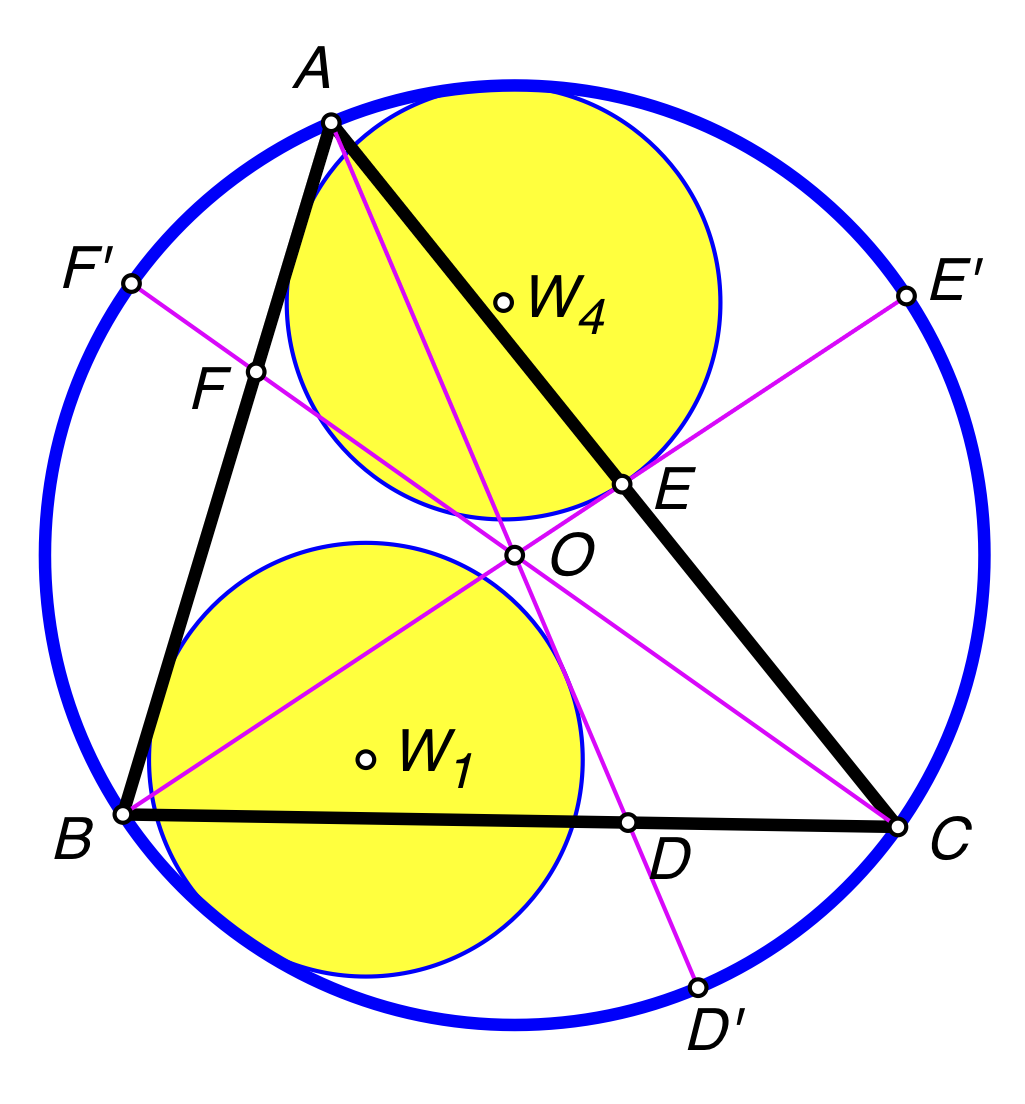}
\caption{$w_1=w_4$}
\label{fig:inscribed-O}
\end{figure}

\begin{proof}
It suffices to show that $w_1=w_4$.
Note that $OA=OB$ because they are both radii of circle $O$.
Thus, $\angle OAB=\angle OBA$ because they are the base angles of an isosceles triangle.
Also, $AD'=BE'$ because they are both diameters of circle $O$.
The {\Wedges} $BAD'$ and $ABE'$ have side $AB$ in common.
Therefore, {\Wedges} $BAD'$ and $ABE'$ are congruent and hence their incircles are also congruent.
\end{proof}


\end{document}